\newcommand{\norm}[1]{\lVert#1\rVert}
\newcommand{\abs}[1]{\lvert #1 \rvert}
\newcommand{\Hil}{\mathcal{H}}
\newcommand{\NN}{\mathbb{N}}
\newcommand{\ZZ}{\mathbb{Z}}
\newtheorem{thm}{Theorem}
\newtheorem{lem}[thm]{Lemma}
\newtheorem*{lem*}{Lemma}
\newtheorem*{thm*}{Theorem}
\theoremstyle{definition}
\theoremstyle{remark}
\theoremstyle{plain}
\DeclareMathOperator{\im}{im}
\renewcommand{\Re}{\operatorname{Re}}
\newcommand{\cl}[1]{\overline{#1}}
\newcommand{\define}[1]{\emph{#1}}
\def\overbar#1#2#3{{%
	\setbox0=\hbox{$#1$}%
	\dimen0=\wd0
	\advance\dimen0 by -#2 
	\vbox {\nointerlineskip \moveright #3 \vbox{\hrule height 0.3pt width \dimen0}%
		\nointerlineskip \vskip 1.5pt \box0}%
}}
\newcommand{\N}{\mathbb{N}}
\newcommand{\Z}{\mathbb{Z}}
\newcommand{\Q}{\mathbb{Q}}
\newcommand{\Zn}[1][n]{\Z^{#1}}
\newcommand{\set}[2]{\bigl\{#1 \thinspace\big\vert\thinspace #2 \bigr\}}
\newcommand{\Set}[2]{\Bigl\{#1 \thinspace\Big\vert\thinspace #2 \Bigr\}}
\newcommand{\seq}[2]{( #1 )_{#2}}
\newtheorem{thmsec}{Theorem}[section]
\numberwithin{equation}{section}
\renewcommand{\abs}[1]{\left| #1 \right|}
\renewcommand{\norm}[1]{\bigl\| #1 \bigr\|}
\newcommand{\Norm}[1]{\Bigl\| #1 \Bigr\|}
\newcommand{\inner}[2]{\bigl\langle #1, #2 \bigr\rangle}
\newcommand{\Inner}[2]{\Bigl\langle #1, #2 \Bigr\rangle}
\newcommand{\msp}{(X, \mathcal{B}, \mu)}
\newcommand{\ms}[1]{\mu(#1)}
\newcommand{\Ms}[1]{\mu\biggl(#1\biggr)}
\newcommand{\Lzwo}{\mathit{L}^2(X, \mu)}
\newcommand{\plim}[1]{\mathop{#1\text{-}}\negthinspace\mathop{\mathrm{lim}}}
\newcommand{\IP}[1]{\operatorname{IP}\bigl( #1 \bigr)}
\newcommand{\FU}[2]{\operatorname{IP}( #1 )_{#2}}
\newcommand{\clA}{\overbar{A}{3pt}{3pt}}
\newcommand{\phis}[1][\phi]{#1_{\ast}}
\newcommand{\polgrp}[2]{#1 \lbrack #2 \rbrack}
\newcommand{\polint}[1]{\polgrp{\mathrm{Int}}{#1}}
\newcommand{\z}[1][\thinspace]{\vec{z}_{#1}}
\newcommand{\w}[1][]{\vec{w}_{#1}}
\renewcommand{\a}[1][]{\vec{a}_{#1}}
\renewcommand{\b}[1][]{\vec{b}_{#1}}
\renewcommand{\c}[1][]{\vec{c}_{#1}}
\renewcommand{\alph}[1][]{\vec{\alpha}_{#1}}
\newcommand{\dsum}[2]{\Delta \negthinspace ^{#1} #2}
\newcommand{\unop}[1]{\prod_{i=1}^{m} U_i^{#1}}
\newcommand{\mspt}[1]{\Bigl(\prod_{i=1}^{m} T_i^{#1}\Bigr)^{-1}}
\newcommand{\betaFun}{\beta\mathcal{F}^{\mathit{un}}}
\newcommand{\nbull}{n_{\bullet}}
\newcommand{\mbull}{m_{\bullet}}
\begin{document}

\title{Idempotent ultrafilters and polynomial recurrence}
\author[C.~Schnell]{Christian Schnell}
\address{The Ohio State University\\
231 West 18th Avenue\\
Columbus, OH 43210}
\email{schnell@math.ohio-state.edu}

\subjclass[2000]{37A45; 28D05; 54D80}
\keywords{Idempotent ultrafilter, IP-set, Polynomial recurrence, Measure-preserving
dynamical system}

\begin{abstract}
We give a new proof of a polynomial recurrence result due to Bergelson, Furstenberg,
and McCutcheon, using idempotent ultrafilters instead of IP-limits.
\end{abstract}
\maketitle

\section*{Introduction} \label{sec:0}

In the thirty or so years since H.~Furstenberg reproved Szemer\'edi's
theorem using methods from ergodic theory, many striking discoveries have been
made in the area now known as \emph{Ergodic Ramsey theory}. Perhaps the most
surprising of these is the discovery that recurrence results can be obtained
for polynomial sets, meaning sets of values of polynomials. The following
pretty theorem, a special case of a more general theorem proved by V.~Bergelson,
H.~Furstenberg, and R.~McCutcheon in \cite{BFM}, is a typical result in this
direction.

\begin{thm*} 
Let $\mathcal{F}$ be the collection of all non-empty finite subsets of $\NN$. 
For any polynomial $p \in \polgrp{\Z}{x_1, \dotsc, x_k}$ satisfying $p(0, \dotsc,
0) = 0$, and for any IP-sets $\{n_{\alpha}^{(1)}\}_{\alpha \in \mathcal{F}},
\dotsc, \{n_{\alpha}^{(k)}\}_{\alpha \in \mathcal{F}}$, the set 
\[
	R = \set{p(n_{\alpha}^{(1)}, \dotsc, n_{\alpha}^{(k)})}{\alpha \in \mathcal{F}}
\]
is a set of nice recurrence.
\end{thm*}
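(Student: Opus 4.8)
The plan is to run the argument of \cite{BFM} with a single idempotent ultrafilter $\mathfrak{p}$ in place of the nested IP-limits there, deducing recurrence from a projection property of the associated weak operator limit. Recall that $R\subseteq\NN$ is a set of nice recurrence if for every measure-preserving system $\msp$ with transformation $T$, every $A$ with $\ms{A}>0$, and every $\eps>0$ there is $n\in R$ with $\ms{A\cap T^{-n}A}>\ms{A}^2-\eps$. Put $\Hil=\Lp{2}$ and let $U=U_T$ be the Koopman operator $U_Tf=f\circ T$, so $\ms{A\cap T^{-n}A}=\inner{U^n\mathbf{1}_A}{\mathbf{1}_A}$. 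First I would fix an idempotent $\mathfrak{p}$ in the compact right-topological semigroup of those ultrafilters on the partial semigroup $(\mathcal{F},\cup)$ all of whose members are cofinal --- i.e.\ contain $\{\alpha:\min\alpha>N\}$ for every $N$ --- which exists by the usual compactness argument; the useful consequences are that $\mathfrak{p}+\mathfrak{p}=\mathfrak{p}$ and that $\{\alpha:\alpha\cap\beta=\emptyset\}\in\mathfrak{p}$ for each fixed finite $\beta$. The theorem then reduces to the inequality
\[
	\lim_{\alpha\to\mathfrak{p}}\ \inner{U^{\,p(n_\alpha^{(1)},\dotsc,n_\alpha^{(k)})}\mathbf{1}_A}{\mathbf{1}_A}\ \ge\ \ms{A}^2 ,
\]
because a $\mathfrak{p}$-limit $\ge\ms{A}^2$ forces $\{\alpha:\ms{A\cap T^{-p(n_\alpha)}A}>\ms{A}^2-\eps\}\in\mathfrak{p}$, in particular to be non-empty.

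\textbf{The linear base case.} When $p$ is linear, $q(\alpha):=p(n_\alpha^{(1)},\dotsc,n_\alpha^{(k)})$ is additive on disjoint unions (using $n^{(j)}_{\alpha\cup\beta}=n^{(j)}_\alpha+n^{(j)}_\beta$ for $\alpha\cap\beta=\emptyset$), so $U^{q(\alpha\cup\beta)}=U^{q(\alpha)}U^{q(\beta)}$ for disjoint $\alpha,\beta$ --- and disjointness is automatic inside a $\mathfrak{p}$-limit. A short computation with the semigroup operation and $\mathfrak{p}+\mathfrak{p}=\mathfrak{p}$ shows that $W:=\lim_{\alpha\to\mathfrak{p}}U^{q(\alpha)}$, which exists in the weak operator topology by compactness, satisfies $W^2=W$; being a contraction, $W$ is an orthogonal projection. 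Since $U^m\mathbf{1}_X=\mathbf{1}_X$ for all $m$, we have $W\mathbf{1}_X=\mathbf{1}_X$, whence $\inner{W\mathbf{1}_A}{\mathbf{1}_A}=\norm{W\mathbf{1}_A}^2\ge\inner{W\mathbf{1}_A}{\mathbf{1}_X}^2=\inner{\mathbf{1}_A}{\mathbf{1}_X}^2=\ms{A}^2$, which is the desired inequality.

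\textbf{The polynomial case.} For nonlinear $p$, $q$ is no longer additive and $W$ is not obviously a projection. Here I would prove, by the polynomial exhaustion technique (PET induction), that for every polynomial $q$ with $q(0)=0$ --- and, as the induction forces, for every finite IP-polynomial system of such --- the weak operator limit $W_q:=\lim_{\alpha\to\mathfrak{p}}U^{q(n_\alpha^{(1)},\dotsc,n_\alpha^{(k)})}$ exists and is the orthogonal projection onto a closed $T$-invariant subspace $\Hil_q\ni\mathbf{1}_X$; granting this for the given $p$ finishes the proof as in the linear case. The induction runs on a well-founded complexity of the polynomial system, and its engine is a van der Corput lemma for $\mathfrak{p}$-limits (in a suitable form: if $(v_\alpha)_{\alpha\in\mathcal{F}}$ is bounded in $\Hil$ and $\inner{v_{\alpha\cup\beta}}{v_\beta}\to 0$ along $\mathfrak{p}$ in each variable, then $\lim_{\alpha\to\mathfrak{p}}v_\alpha=0$ weakly), applied to $v_\alpha=U^{q(n_\alpha)}g$; the crucial identity is
\[
	\inner{v_{\alpha\cup\beta}}{v_\beta}\ =\ \Inner{U^{\,q(n_\alpha+n_\beta)-q(n_\beta)}g}{g} ,
\]
and for each fixed $\beta$ the polynomial $\alpha\mapsto q(n_\alpha+n_\beta)-q(n_\beta)$, with the analogous differences of the remaining polynomials, is an IP-polynomial system of strictly smaller complexity, so the inner $\mathfrak{p}$-limit is governed by the inductive hypothesis. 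Organizing this along the lines of the Koopman--von Neumann/IP decomposition --- a maximality argument isolates the subspace $\Hil_q$ on which the $\mathfrak{p}$-limit is the identity and forces it to vanish on $\Hil_q^\perp$, while the action on $\Hil_q$ is reduced, through the Halmos--von Neumann structure of compact group rotations, to the fact that a polynomial expression in IP-sets lands at the identity of a compact abelian group along $\mathfrak{p}$ (degree one being the linear case again) --- yields the projection property.

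\textbf{Where the difficulty lies.} The reformulation, the base-case projection computation, and the passage from the operator inequality to recurrence are routine. The substance is the PET induction together with the $\mathfrak{p}$-van der Corput lemma, where I anticipate three pressure points: (i) defining the complexity of an IP-polynomial system and proving it strictly decreases under the differencing $q\mapsto q(\,\cdot\,+n_\beta)-q(n_\beta)$, uniformly in the families of $\beta$'s that occur; (ii) verifying that one fixed idempotent $\mathfrak{p}$ --- via $\mathfrak{p}+\dotsb+\mathfrak{p}=\mathfrak{p}$ and the fact, pulled back along $\alpha\mapsto n_\alpha$, that every $\mathrm{IP}^{*}$ set belongs to every idempotent ultrafilter --- genuinely does the bookkeeping that the nested IP-rings of \cite{BFM} do, which is the conceptual point of the approach and the likeliest place for a hidden subtlety; and (iii) proving the van der Corput lemma for $\mathfrak{p}$-limits and checking that the inductive step really supplies its hypothesis.
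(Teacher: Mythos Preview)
Your high-level framework matches the paper's exactly: fix an uncongested idempotent $\mathfrak{p}\in\beta\mathcal{F}$, show that the weak operator limit $W=\plim{\mathfrak{p}}_{\alpha}U^{p(n_\alpha^{(1)},\dotsc,n_\alpha^{(k)})}$ is an orthogonal projection fixing the constant function, and deduce $\inner{W\mathbf{1}_A}{\mathbf{1}_A}=\norm{W\mathbf{1}_A}^2\ge\inner{\mathbf{1}_A}{\mathbf{1}_X}^2=\ms{A}^2$. Your linear base case and the passage from the operator inequality to recurrence are the same as in the paper.

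The inductive engine you propose, however, is not the one the paper uses, and your sketch of it has gaps. The paper does \emph{not} run PET induction in $\beta\mathcal{F}$. It first pushes $\mathfrak{p}$ forward along $\alpha\mapsto(n_\alpha^{(1)},\dotsc,n_\alpha^{(k)})$ to an idempotent $q\in\beta\Z^k$ and works there, inducting on pairs $(s,d)$ where $s$ is the number of nested $q$-limits and $d$ bounds the degree in each variable. The differencing $f\mapsto f(\a+\z)-f(\a)-f(\z)$ lowers $d$ but introduces a second variable, so the statement must be proved for iterated limits too; handling $s\ge 2$ requires the notion of the \emph{dimension} of $q$ (the minimal rank of a $q$-big subgroup of $\Z^k$), a splitting of $\Hil$ indexed by ranks of subgroups inside a free group of integer-valued polynomials, and a key algebraic lemma to the effect that if $\{\b:\exists N\ne 0,\ N\cdot v(\b)\in V\}$ is $q$-big then so is $\{\a:v(\a)-v(0)\in V\}$. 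None of this appears in your plan. Your PET outline also does not explain how a complexity-lowering scheme designed to control \emph{averages} yields the \emph{projection} identity $W^2=W$; the paper obtains it from a two-sided splitting ($\ker Q$ via van der Corput, $\im Q$ via strong convergence), and you would need an analogue. Finally, the appeal to the ``Halmos--von Neumann structure of compact group rotations'' is misplaced: no ergodic structure theorem enters the argument at all---the subspace on which $W$ acts as the identity is not a Kronecker factor, and the proof is purely Hilbert-space and polynomial-algebraic.
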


To say that $R \subseteq \ZZ$ is a set of \emph{nice recurrence} means that for any
probability space $\msp$, and any invertible measure-preserving transformation $T$ on
$X$, one has
\[
	\limsup_{n \in R} \mu(A \cap T^n A) \geq \mu(A)^2
\]
for all $A \in \mathcal{B}$. Moreover, an \emph{IP-set} is any set of the form
\[
	\Set{n_{\alpha} = \sum_{i \in \alpha} n_i}{\text{$\alpha \in \mathcal{F}$}},
\]
for positive integers $n_0, n_1, n_2, \dotsc$.

As in Furstenberg's result, this inequality has immediate combinatorial applications.
It also turned out that the above theorem was only a first step; much stronger
results---combining IP-convergence, multiple recurrence as in Szemer\'edi's theorem,
and polynomial sets---have since been established, for instance in \cite{BM}.

The purpose of the present paper is to give a different proof for the central result
of \cite{BFM}, using \emph{idempotent ultrafilters} instead of IP-limits. While this
approach is less constructive, it has the advantage of ``making the statements and
proofs cleaner and more algebraic,'' in the words of the survey paper \cite{BSur}.
It also follows the general philosophy that for each result about IP-sets, there
should be an analogous result about idempotent ultrafilters.

The main theorem and its proof are presented in Section~\ref{sec:V}; however, a
better point to begin reading is probably Section~\ref{sec:II}, which treats a
special but typical case, and explains the method of proof in some detail.
Section~\ref{sec:VI} contains a small number of applications, of the type mentioned
above. 

Since ultrafilters on groups and semigroups are used throughout the paper, their
basic properties are reviewed in Section~\ref{sec:I}; readers who are  already
familiar with $\beta\N$, for instance from \cite{BSur}, will recognize all the
material, despite the more general context. To keep the paper self-contained, several
generally known results about operators and integer-valued polynomials have also been
included; these make up Sections~\ref{sec:III} and \ref{sec:IV}.

\subsection*{Note} 

Vitaly Bergelson, who advised me during my first two years in graduate school,
suggested the problem of reproving the results in \cite{BFM} using idempotent
ultrafilters. I am very grateful to him for his help, as well as for countless
pleasant conversations. Unlike wine, the paper has failed to mature during the
several years that it has been stored on the hard drive of my computer; nevertheless,
I have decided to make it available, since it is in my opinion a nice application of
idempotent ultrafilters to recurrence results.

\section{The Stone-\v Cech compactification of a discrete semigroup} \label{sec:I} 

\subsection*{Ultrafilters} 

We begin by reviewing the definition and several basic properties of the space of
ultrafilters. Let $(S, \circ)$ be a commutative semigroup. An \define{ultrafilter} on $S$ is a
collection $p$ of subsets of $S$ with the following four properties:
\begin{enumerate}
\item $S \in p$ and $\emptyset \not\in p$.
\item If $A \in p$, and $B \supseteq A$, then $B \in p$.
\item If $A,B \in p$, then $A \cap B \in p$.
\item For every $A \subseteq S$, either $A \in p$, or $S \setminus A \in p$.
\end{enumerate}
For every $s \in S$, there is a \define{principal} or \emph{trivial} ultrafilter
consisting of all subsets containing $s$; the construction of other ultrafilters
requires the Axiom of Choice.

The space $\beta S$ of all ultrafilters on $S$, suitably topologized, is the Stone-\v
Cech compactification of the discrete space $S$.  After briefly stating the basic
properties of $\beta S$, we will consider two examples: one where $S$ is the group
$\Zn$, and a second one where $S$ equals $\mathcal{F}$, the set of nonempty finite
subsets of $\N$. A good and very comprehensive reference for this topic is the book
by Hindman and Strauss \cite{HiS}.  
 
\subsection*{Terminology} 

Since ultrafilters are collections of sets, the following terminology is convenient
when dealing with their members.  If $p$ is an ultrafilter on $S$, we call a set
\define{$p$-big} if it is contained in $p$; we shall also use the phrase `\define{for
$p$-many $s$}' to mean `for all $s$ in some $p$-big set.' In the case of several
variables, we shall say that $\langle \text{statement} \rangle$ holds `\define{for
$p$-many $s_1, \dotsc, s_n$}' if 
\[
	\set{s_1 \in S}{\set{s_2 \in S}{ \cdots \set{s_n \in S}{\langle \text{statement}
		\rangle} \in p \cdots } \in p} \in p.
\] 
In other words, there should be $p$-many $s_1$, for which there are $p$-many $s_2$,
for which \dots, for which there are $p$-many $s_n$, for which $\langle \text{statement}
\rangle$ is true. Nested sets of exactly this form will play a role during the proof
of the main theorem in Section~\ref{sec:V}.

\subsection*{Basic properties}
	
As was said above, we let $\beta S$ be the set of ultrafilters on $S$, and consider $S$
as a subset of $\beta S$, by identifying an element of $S$ with the principal
ultrafilter it generates.  One can put a topology on $\beta S$, in which the sets
\begin{align*}
	\clA = \set{p \in \beta S}{A \in p} 		&& \text{(for $A \subseteq S$)}
\end{align*}
give a basis for the closed sets; each $\clA$ is both closed and open.
The result is a compact space (this includes the Hausdorff property) that has $S$ as
a discrete and dense subspace.

The semigroup operation $\circ$ extends to $\beta S$; given $p$ and $q$ in $\beta S$, 
their product $p \circ q$ may be defined by the property that for any $A \subseteq S$, 
\[ \label{eq:circ}
	A \in p \circ q  \quad \Longleftrightarrow \quad \set{s \in S}{\set{t \in S}{s \circ t 
	\in A} \in q} \in p.
\]
The new operation is associative and continuous from the left (meaning that for any
$q$, the map $p \mapsto p \circ q$ is continuous), and makes $\beta S$ into a compact
left-topological semigroup. 

\subsection*{Idempotent ultrafilters} 

An ultrafilter $p \in \beta S$ is called \define{idempotent} if it satisfies the
relation $p \circ p = p$. Idempotent ultrafilters are closely related to
\define{IP-sets}, which are sets of the form
\begin{align*}
	\Set{\prod_{i \in \alpha} s_i}{\text{$\alpha \subset \N$ finite, nonempty}},
\end{align*}
for a given sequence $\seq{s_i}{i \in \N}$. Any member of an idempotent ultrafilter
contains an IP-set, and conversely, every IP-set is contained in some idempotent
ultrafilter. This fact is sometimes called Hindman's theorem (see
\cite{BSur}*{Theorem~3.4} for details); it implies that one can find many idempotent
ultrafilters (provided, as usual, that the Axiom of Choice is assumed).

When a finite sequence $s_1,\dotsc, s_n$ is used in place of an infinite one, we shall
denote the resulting \define{finite IP-set} by $\IP{s_1, \dotsc, s_n}$. 
The proof that any member of an idempotent has to contain an IP-set allows a much 
stronger conclusion if we are only looking for finite IP-sets.

\begin{lem} \label{lem:FPsets}
Let $p \in \beta S$ be an idempotent ultrafilter. If $A$ is a $p$-big set, then for 
any $n \in \N$ one has
\[
	\set{s_1 \in A}{ \cdots \set{s_n \in A}{\IP{s_1, \dotsc, s_n} \subseteq A} \in p
		\cdots } \in p.
\] 
In the terminology introduced above, one can say that there are $p$-many $s_1,
\dotsc, s_n$ in $A$ such that $\IP{s_1, \dotsc, s_n} \subseteq A$.
\end{lem}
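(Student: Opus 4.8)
The plan is to argue by induction on $n$, the engine being the classical observation that for an idempotent $p$ and a $p$-big set $A$ the thinned-out set
\[
	A^{\star} := \set{s \in A}{s^{-1}A \in p}, \qquad \text{where } s^{-1}A := \set{t \in S}{s \circ t \in A},
\]
is again $p$-big. This is immediate from the definition of the product on $\beta S$: since $A \in p = p \circ p$, the set $\set{s \in S}{s^{-1}A \in p}$ lies in $p$, and intersecting it with $A \in p$ leaves a $p$-big set by property~(3) of ultrafilters.

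For $n = 1$ the set in question is simply $\set{s_1 \in A}{\{s_1\} \subseteq A} = A$, so the base case is trivial. For the inductive step the basic tool is the elementary set identity
\[
	\IP{s_1, \dotsc, s_{n+1}} = \{s_1\} \cup \IP{s_2, \dotsc, s_{n+1}} \cup \bigl( s_1 \circ \IP{s_2, \dotsc, s_{n+1}} \bigr),
\]
obtained by sorting the nonempty subsets $\alpha \subseteq \{1, \dotsc, n+1\}$ according to whether or not they contain $1$. It has the following consequence: if $s_1 \in A^{\star}$ and we set $B := A \cap s_1^{-1}A$ --- a $p$-big set --- then whenever $s_2, \dotsc, s_{n+1} \in B$ and $\IP{s_2, \dotsc, s_{n+1}} \subseteq B$, one automatically has $\IP{s_1, \dotsc, s_{n+1}} \subseteq A$, because the three pieces on the right land in $A$ thanks to $s_1 \in A$, to $B \subseteq A$, and to $B \subseteq s_1^{-1}A$, respectively.

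Now fix $s_1 \in A^{\star}$ and apply the induction hypothesis to $B$; this gives
\[
	\set{s_2 \in B}{\cdots \set{s_{n+1} \in B}{\IP{s_2, \dotsc, s_{n+1}} \subseteq B} \in p \cdots} \in p.
\]
By the implication just noted, together with $B \subseteq A$, the set on the left is contained in
\[
	\set{s_2 \in A}{\cdots \set{s_{n+1} \in A}{\IP{s_1, \dotsc, s_{n+1}} \subseteq A} \in p \cdots}
\]
(with our fixed $s_1$), so the latter is $p$-big. As this holds for every $s_1 \in A^{\star}$, and $A^{\star}$ is itself a $p$-big subset of $A$, the $(n+1)$-fold nested set is $p$-big, which closes the induction.

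The one point that calls for genuine care --- though it is only bookkeeping --- is verifying that the inclusion between the two displayed sets really does hold layer by layer. That is a short downward induction on the layer index: at the innermost layer one compares the two defining conditions directly, using $B \subseteq A$ and the set identity above; at each outer layer one uses only that $p$ is closed under supersets (property~(2)), again combined with $B \subseteq A$, to pass from ``the inner set over $B$ is $p$-big'' to ``the inner set over $A$ is $p$-big''. It is worth noting that the stronger stability property $s^{-1}(A^{\star}) \in p$ for $s \in A^{\star}$ --- which is needed when one extracts a single IP-sequence, as in the usual proof of Hindman's theorem --- does not enter here, precisely because the induction hypothesis is invoked afresh on the $p$-big set $B$ rather than used to build a sequence term by term.
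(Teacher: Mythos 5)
Your proof is correct; the paper itself offers no proof of this lemma, merely remarking that the Galvin--Glazer argument behind Hindman's theorem gives it, and your induction on $n$ using $A^{\star}=\set{s\in A}{s^{-1}A\in p}$, the decomposition of $\IP{s_1,\dotsc,s_{n+1}}$ into the three pieces, and a fresh application of the inductive hypothesis to $B=A\cap s_1^{-1}A$ is exactly that argument adapted to the nested-set formulation. Your closing observation --- that the finite statement does not need the stability $s^{-1}(A^{\star})\in p$ because one re-applies the hypothesis to the $p$-big set $B$ rather than extracting a sequence term by term --- is accurate and is precisely what makes the finite version easier than Hindman's theorem itself.
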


\subsection*{Limits along ultrafilters} 

Another useful notion is that of a
\define{$p$-limit}, or a limit along some ultrafilter. Let $p \in \beta S$ be an
ultrafilter. Given a map $f \colon S \to Y$ into some topological space $Y$,
we say that a point $y$ is a limit of $f$ along $p$, written
\[
	y = \plim{p}_s f(s),
\] 
if for every neighborhood $U$ of $y$, the set $f^{-1}(U)$ is $p$-big. 
When the target space $Y$ is compact, all $p$-limits exist and are unique.

This notion of limit is related to the Stone-\v Cech compactification in the
following manner. A \define{compactification} of a Hausdorff space $X$ is a compact
space containing $X$ as a dense subspace. The \define{Stone-\v Cech compactification}
$\beta X$ is the universal compactification, in the sense that for any compact space
$Y$ and any continuous map $f \colon X \to Y$, there is one and only one continuous
extension $\phis[f]$ from $\beta X$ to $Y$, as illustrated in the diagram.

\[
\begindc{\commdiag}[25]
\obj(1,3){$X$}
\obj(3,3)[$bX$]{$\smash[b]{\beta}X$}
\obj(1,1){$Y$}
\mor(1,3)(1,1){$\smash{f}$}[\atright, \solidarrow]
\mor(1,3)(3,3){}[\atright, \injectionarrow]
\mor(3,3)(1,1){$\smash[b]{\phis[f]}$}[\atleft, \dasharrow]
\enddc
\]

Every other compactification is a quotient of $\beta X$; furthermore, if $g \colon Y
\to Z$ is a second continuous map of compact spaces, one has $\phis[(fg)] = \phis[f]
\phis[g]$ because of the uniqueness statement.

Now the space $\beta S$, as defined above, is the Stone-\v Cech compactification
of the discrete topological space $S$; 
given any map $f \colon S \to Y$ into a compact space $Y$, the required extension
$\phis[f] \colon \beta S \to Y$ is given by
\begin{align*}
	\phis[f](p) = \plim{p}_s f(s)	 &&\text{(for $p \in \beta S$),}
\end{align*}
which is continous as a map from $\beta S$ to $Y$.

The following lemma is an immediate consequence of the universal property.

\begin{lem}  \label{lem:phistar}
Any map $\phi \colon S \to T$ between two semigroups $S$ and $T$ induces a
continuous map $\phis \colon \beta S \to \beta T$, given by $\phis(p) = \plim{p}_s \phi(s)$. 
A set $B$ is $\phis(p)$-big if, and only if, its preimage $\phi^{-1}(B)$ is $p$-big. 
For any map $f : T \to Y$ into a compact space $Y$, one has
\[
	\plim{p}_s f(\phi(s)) = \plim{\phis(p)}_t f(t).
\]
If $\phi$ is multiplicative, so is $\phis$; in particular, $\phis(p)$
is then always idempotent for idempotent $p \in \beta S$.
\end{lem}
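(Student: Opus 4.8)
The plan is to deduce every assertion from the universal property of the Stone-\v Cech compactification, which the excerpt has already established. Composing $\phi\colon S\to T$ with the inclusion $T\hookrightarrow\beta T$ gives a continuous map from the discrete space $S$ into the compact space $\beta T$; by the universal property it extends uniquely to a continuous map $\beta S\to\beta T$, and by the explicit formula for the canonical extension recorded just before the lemma, this extension sends $p$ to $\plim{p}_s\phi(s)$. This gives continuity of $\phis$ and its defining formula at once.

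For the big-set characterization I would use that each basic clopen set $\set{q\in\beta T}{B\in q}$ is an open neighborhood of every one of its points, and that the principal ultrafilter generated by $t\in T$ lies in it precisely when $t\in B$. If $B\in\phis(p)$, then $\set{q\in\beta T}{B\in q}$ is an open neighborhood of $\phis(p)=\plim{p}_s\phi(s)$, so by the definition of a $p$-limit the set $\set{s\in S}{\phi(s)\in B}=\phi^{-1}(B)$ is $p$-big. Conversely, if $\phi^{-1}(B)$ is $p$-big, then its complement $\phi^{-1}(T\setminus B)=S\setminus\phi^{-1}(B)$ is not, so by the implication just proved $T\setminus B\notin\phis(p)$, and hence $B\in\phis(p)$.

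The limit-transfer identity is then a matter of functoriality. Given a continuous map $f\colon T\to Y$ with $Y$ compact, both $\phis[f]\circ\phis$ and the canonical extension of $f\circ\phi$ are continuous maps $\beta S\to Y$ agreeing with $f\circ\phi$ on the dense subspace $S$; by the uniqueness in the universal property (equivalently, by the relation $\phis[(fg)]=\phis[f]\phis[g]$ noted above, applied with $g=\phi$) they coincide. Evaluating at $p$ and unwinding the formula for each canonical extension gives $\plim{p}_s f(\phi(s))=\plim{\phis(p)}_t f(t)$.

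It remains to handle multiplicativity, which I expect to be the one genuinely fiddly point. Suppose $\phi(s\circ t)=\phi(s)\circ\phi(t)$ for all $s,t$; I would show $\phis(p\circ q)=\phis(p)\circ\phis(q)$ by checking that they contain the same subsets $B$ of $T$. By the big-set characterization, $B\in\phis(p\circ q)$ iff $\phi^{-1}(B)$ is $(p\circ q)$-big, which by the definition of the product on $\beta S$ means $\set{s}{\set{t}{s\circ t\in\phi^{-1}(B)}\in q}\in p$. On the other side, $B\in\phis(p)\circ\phis(q)$ unwinds, first via the definition of the product on $\beta T$ and then via the big-set characterization applied twice, to $\set{s}{\set{t}{\phi(s)\circ\phi(t)\in B}\in q}\in p$; and here the homomorphism property replaces $\phi(s)\circ\phi(t)$ by $\phi(s\circ t)$, so that $\set{t}{\phi(s)\circ\phi(t)\in B}=\set{t}{s\circ t\in\phi^{-1}(B)}$ and the condition becomes the same one as before. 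The care required is simply to keep the three nested layers of ``for $p$-many $\dots$'' straight and to apply multiplicativity of $\phi$ at exactly the innermost step. The final claim is then immediate: if $p\circ p=p$, then $\phis(p)\circ\phis(p)=\phis(p\circ p)=\phis(p)$.
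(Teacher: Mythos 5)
Your proof is correct, and it follows exactly the route the paper has in mind: the paper states this lemma without proof as an ``immediate consequence of the universal property,'' and your argument supplies those details in the standard way (universal property for the extension and the limit-transfer identity, the clopen sets $\clA$ and the ultrafilter axioms for the big-set characterization, and a careful unwinding of the definition of $\circ$ for multiplicativity). Nothing essential differs from the intended argument.
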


There is another important property of $p$-limits, especially useful for our
purposes.

\begin{lem} \label{lem:plimits}
Let $p$ and $q$ be two elements of $\beta S$. The equality
\[
	\plim{p}_s \plim{q}_t f(s \circ t) = \plim{(p \circ q)}_s f(s)
\]
holds for any map $f \colon S \to Y$ into a compact space $Y$. 
\end{lem}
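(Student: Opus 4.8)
The plan is to reduce everything to the combinatorial description of $p \circ q$ together with the uniqueness of limits in a compact Hausdorff space. Since $Y$ is compact, the inner limit $g(s) = \plim{q}_t f(s \circ t)$ exists for every $s \in S$ and defines a map $g \colon S \to Y$; the outer limit $z = \plim{p}_s g(s)$ then exists as well, and so does the right-hand side $y = \plim{(p \circ q)}_s f(s)$. Thus it suffices to prove $z = y$, and since $Y$ is Hausdorff and $(p \circ q)$-limits are unique, it is enough to show that $z$ is \emph{a} limit of $f$ along $p \circ q$, i.e.\ that $f^{-1}(U) \in p \circ q$ for every open neighborhood $U$ of $z$.

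Fix such a $U$. The key observation is that for any individual $s \in S$ with $g(s) \in U$, the set $\set{t \in S}{f(s \circ t) \in U}$ is $q$-big: for such an $s$ the open set $U$ is a neighborhood of $g(s) = \plim{q}_t f(s \circ t)$, and this is exactly the defining property of the $q$-limit. Therefore
\[
	\set{s \in S}{g(s) \in U} \subseteq \set{s \in S}{\set{t \in S}{f(s \circ t) \in U} \in q}.
\]
Now $z = \plim{p}_s g(s)$ and $U$ is a neighborhood of $z$, so the left-hand set is $p$-big; by upward closure (property~(2) of the ultrafilter $p$) the right-hand set is $p$-big as well. By the definition of $p \circ q$ recalled above, the statement that $\set{s}{\set{t}{f(s \circ t) \in U} \in q}$ is $p$-big is precisely the statement that $f^{-1}(U) = \set{s}{f(s) \in U}$ lies in $p \circ q$.

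Letting $U$ range over all open neighborhoods of $z$ gives $z = \plim{(p \circ q)}_s f(s) = y$, which is the desired equality. The whole argument is a direct unwinding of the definitions; the only places needing any care are to keep everything phrased in terms of \emph{open} neighborhoods, so that ``$g(s) \in U$'' upgrades to ``$U$ is a neighborhood of $g(s)$'' and the $q$-limit property can be applied, and to invoke the Hausdorff property of $Y$ at the end so that the two candidate limits must coincide. I do not expect any genuine obstacle beyond this bookkeeping.
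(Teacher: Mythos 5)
Your proposal is correct and follows essentially the same route as the paper's proof: unwind the definition of $p \circ q$, observe that a neighborhood $U$ of the iterated limit pulls back to the nested $p$-big set $\set{s}{\set{t}{f(s \circ t) \in U} \in q}$, and conclude by uniqueness of limits in a compact Hausdorff space. Your version is in fact slightly more careful than the paper's, since you use containment plus upward closure where the paper says ``equivalently,'' but the argument is the same.
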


\begin{proof}
Let $y = \plim{p}_s \plim{q}_t f(s \circ t)$; for any neighborhood $U$ of $y$, the
set
\[
	\set{s}{\plim{q}_t f(s \circ t) \in U}
\]
is $p$-big. Equivalently,  
\[
	\set{s}{\set{t}{f(s \circ t) \in U} \in q} \in p,
\]
and this is nothing but the condition $f^{-1}(U) \in p \circ q$. It follows that the
right-hand limit $\plim{(p \circ q)}_s f(s)$ also equals $y$.
\end{proof}

The lemma explains one useful aspect of idempotent ultrafilters---if $p$ is an
idempotent, one has
\begin{equation} \label{eq:doubleplimit}
	\plim{p}_s \plim{p}_t f(s \circ t) = \plim{p}_s f(s),
\end{equation}
and this relation is at the base of all applications of ultrafilters to recurrence
results.

As an application, let us prove a lemma known as \define{van der Corput's trick}, for
$p$-limits. It provides a useful sufficient condition for a weak $p$-limit in a
Hilbert space to be zero.

\begin{lem} \label{lem:VanDerCorput}
Let $Y$ be a closed ball in a Hilbert space $\Hil$, endowed with the weak topology
(and thus compact). Given a map $f \colon S \to Y$ and an idempotent $p \in \beta S$,
let $y = \plim{p}_s f(s)$. If
\[
	\plim{p}_s \plim{p}_t \inner{f(s \circ t)}{f(t)} = 0,
\]
then $y = 0$.
\end{lem}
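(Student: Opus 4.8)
To prove $y=0$ it suffices to show $\inner{y}{y}=0$; the plan is to realize $y$ as an ``averaged'' vector and then run a Cauchy--Schwarz (Jensen) argument. By Lemma~\ref{lem:plimits} and $p\circ p=p$ one has $y=\plim{p}_s f(s)=\plim{p}_s\plim{p}_t f(s\circ t)=\plim{p}_s g(s)$, where $g(s):=\plim{p}_t f(s\circ t)$; both $g(s)$ and $y$ lie in $Y$, which is convex and weakly closed. The norm on $\Hil$ is weakly lower semicontinuous, since each ball $\set{v}{\norm{v}\le c}$ is convex and norm-closed, hence weakly closed; therefore $\norm{y}\le\plim{p}_s\norm{g(s)}$, and squaring (a continuous, monotone operation, so it commutes with the $p$-limit) gives $\inner{y}{y}\le\plim{p}_s\norm{g(s)}^2$. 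Expanding both copies of $g(s)=\plim{p}_t f(s\circ t)$ by the weak continuity of each slot of $\inner{\cdot}{\cdot}$ yields $\norm{g(s)}^2=\plim{p}_t\plim{p}_u\inner{f(s\circ t)}{f(s\circ u)}$, so altogether
\[
	\inner{y}{y}\;\le\;\plim{p}_s\plim{p}_t\plim{p}_u\inner{f(s\circ t)}{f(s\circ u)}.
\]

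The remaining step---and the main obstacle---is to reindex this triple $p$-limit so that it becomes the quantity appearing in the hypothesis. Here one uses commutativity of $\circ$ to interchange the inner two limits and to introduce, via a substitution of the form $t=u\circ k$, a genuine shift variable $k$ linking the two arguments $f(s\circ t)$ and $f(s\circ u)$; the effect of such substitutions on $p$-limits is governed by Lemma~\ref{lem:phistar} (translation by $a$ pushes $p$ forward to $a\circ p$) together with the left-continuity of $\circ$ on $\beta S$, which yields $\plim{p}_a(a\circ p)=(\plim{p}_a a)\circ p=p\circ p=p$ and hence lets a nested $p$-limit of the shape $\plim{p}_a\plim{a\circ p}_w$ collapse back to $\plim{p}_w$. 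Done correctly, these manipulations turn the right-hand side into $\plim{p}_s\plim{p}_t\inner{f(s\circ t)}{f(t)}$, which is $0$ by assumption, so $\inner{y}{y}=0$ and $y=0$. The reason this last step needs care rather than formal symbol-pushing is twofold: iterated $p$-limits do not commute in general, so the order of the two outer limits (``shift'' then ``base point,'' as in the hypothesis) must be engineered and not merely assumed; and $g(s)$ converges to $y$ only weakly, not in norm, so any step that quietly treats $g(s)$ as if it equalled $y$ is illegitimate and must be avoided---which is precisely why the reindexing has to be carried out at the level of $p$-limits rather than by a crude approximation.
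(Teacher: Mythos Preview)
There is a genuine gap in the reindexing step, and in fact the identity you are aiming for is false. A single application of lower semicontinuity gives only $\norm{y}^2 \le \plim{p}_s \norm{g(s)}^2$, and the right-hand side need not equal the hypothesis quantity, nor even vanish when the hypothesis holds. Concretely, take $S = \mathcal{F}$ with the union operation, $p$ any uncongested idempotent, $\Hil = \ell^2(\NN)$, and $f(\alpha) = e_{\min\alpha}$. For $p$-many $\beta$ one has $\beta > \alpha$ and hence $f(\alpha \cup \beta) = e_{\min\alpha}$; thus $g(\alpha) = e_{\min\alpha}$ and $\plim{p}_{\alpha} \norm{g(\alpha)}^2 = 1$, while $\plim{p}_{\alpha} \plim{p}_{\beta} \inner{f(\alpha \cup \beta)}{f(\beta)} = 0$ since $\inner{e_{\min\alpha}}{e_{\min\beta}} = 0$ once $\beta > \alpha$. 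So your triple limit equals $1$, the hypothesis quantity equals $0$, and no reindexing can convert one into the other. The specific moves you invoke---swapping iterated $p$-limits, and substituting $t = u \circ k$---are not valid in general: iterated $p$-limits do not commute, the substitution is undefined in a semigroup, and the collapse of $\plim{p}_a \plim{(a \circ p)}_w$ to $\plim{p}_w$ that you cite holds only when the integrand depends on $w$ alone.

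What is missing is the van der Corput averaging over $N$ terms, which is precisely what the paper does. One writes $y$ as an $N$-fold iterated $p$-limit of the average $\frac{1}{N}\sum_{n=1}^N f(s_n \circ \dotsb \circ s_N)$ and applies lower semicontinuity to \emph{that} average; the square then expands into $N$ diagonal terms, each collapsing via \eqref{eq:doubleplimit} to $\plim{p}_s\norm{f(s)}^2$, and $N(N-1)$ off-diagonal terms, each collapsing to the hypothesis quantity and hence to $0$. This yields $\norm{y}^2 \le \frac{1}{N}\plim{p}_s \norm{f(s)}^2$, and letting $N \to \infty$ gives $y = 0$. The factor $1/N$ is not cosmetic: it is what kills the diagonal contribution, and a one-step bound of the form $\norm{y}^2 \le \plim{p}_s \norm{g(s)}^2$ cannot accomplish this.
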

\begin{proof}
One uses \eqref{eq:doubleplimit} in a clever way. Notice that for any $N \in
\N$, 
\[
	y = \plim{p}_{s_1} \cdots \plim{p}_{s_N} \frac{1}{N} \sum_{n = 1}^N f(s_n \circ 
	\dotsb \circ s_N).
\]
Using weak lower semi-continuity of the norm, we obtain
\begin{align*}
	\norm{y}^2 &\le 
		\plim{p}_{s_1} \cdots \plim{p}_{s_N} \frac{1}{N^2} \Norm{\sum_{n = 1}^N f(s_n 
		\circ \dotsb \circ s_N)}^2 \\
		&= \plim{p}_{s_1} \cdots \plim{p}_{s_N} \frac{1}{N^2} \Inner{\sum_{m = 1}^N f(s_m 
		\circ \dotsb \circ s_N)}{\sum_{n = 1}^N f(s_n \circ \dotsb \circ s_N)} \\
		&= \frac{1}{N^2} \sum_{m, n} \plim{p}_{s_1} \cdots \plim{p}_{s_N} \inner{f(s_m 
		\circ \dotsb \circ s_N)}{f(s_n \circ \dotsb \circ s_N)}, \\
\intertext{and after collapsing the multiple $p$-limits with the help of 
		\eqref{eq:doubleplimit}, this becomes}
	   &= \frac{1}{N^2} \sum_n \plim{p}_s \norm{f(s)}^2 + \frac{2}{N^2} \Re 
		\sum_{m < n} \plim{p}_s \plim{p}_t \inner{f(s \circ t)}{f(t)} \\
		&= \frac{1}{N} \plim{p}_s \norm{f(s)}^2.
\end{align*}
Since $N$ was arbitrary, we see that $y = 0$.
\end{proof}

We shall now discuss two concrete examples of semigroups and their Stone-\v Cech
compactifications, namely $\beta \Zn$ and $\beta \mathcal{F}$.

\subsection*{\texorpdfstring{Abelian groups and $\beta \Zn$}{Abelian groups and
ultrafilters}}

We are going to use vector notation for elements of $\Zn$, such as $\a = (a_1,
\dotsc, a_n)$. Even though $\Zn$ is a group, the space of ultrafilters $\beta \Zn$ is
only a semigroup, because there are in general no inverses for elements. Still,
we can get information about ultrafilters in $\beta \Zn$ from the group structure of
$\Zn$; in particular, we shall investigate the relationship between subgroups and
idempotent ultrafilters.

Every subgroup of $\Zn$ is itself free, of rank between $0$ and $n$. The
first observation is that subgroups of rank $n$ are contained in every idempotent
ultrafilter.

\begin{lem} \label{lem:Lattice}
For every idempotent $p \in \beta \Zn$, all rank $n$ subgroups are $p$-big.	
\end{lem}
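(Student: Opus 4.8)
The plan is to reduce to the case of a finite-index subgroup and then use the idempotency relation $p \circ p = p$ together with the closure properties of ultrafilters. First I would observe that any rank $n$ subgroup $H \subseteq \Zn$ has finite index, say $[\Zn : H] = m$, so that $\Zn$ decomposes into finitely many cosets $H = H + \vec{c}_0, H + \vec{c}_1, \dotsc, H + \vec{c}_{m-1}$. Since an ultrafilter contains exactly one member of any finite partition of $S$ (this follows from properties (1), (3), and (4), by an easy induction), there is a unique coset $H + \vec{c}_j$ that is $p$-big; the goal is to show $j = 0$.

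Next I would exploit idempotency via the defining property of $p \circ p$. Writing $B = H + \vec{c}_j$ for the $p$-big coset, the condition $B \in p = p \circ p$ says that for $p$-many $\a$, the set $\set{\b \in \Zn}{\a + \b \in B}$ is $p$-big. But $\a + \b \in H + \vec{c}_j$ is equivalent to $\b \in H + \vec{c}_j - \a$, i.e. $\b$ lies in the coset $H + (\vec{c}_j - \a)$. Thus for $p$-many $\a$, the coset $H + (\vec{c}_j - \a)$ is $p$-big, so by uniqueness of the $p$-big coset we get $\vec{c}_j - \a \equiv \vec{c}_j \pmod H$, that is, $\a \in H$. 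Combining: $\set{\a \in \Zn}{\a \in H}$ is $p$-big, which says exactly that $H$ itself is $p$-big. Since $H \subseteq B$ and $p$ is closed upward, this is consistent, and in fact it already gives what we want — $H \in p$.

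Actually, let me streamline: the cleanest route is to apply the reasoning directly to $H$ rather than first identifying the $p$-big coset. Since $\Zn / H$ is a finite abelian group of order $m$, some coset $H + \vec{c}_j \in p$. By the $p \circ p = p$ computation above, for $p$-many $\a$ one has $H + (\vec{c}_j - \a) \in p$, forcing $\a \in H$ (again by uniqueness), so $H \in p$. I expect the one point requiring a word of care is the uniqueness of the $p$-big member among the cosets: one needs that a finite disjoint union in $p$ has exactly one summand in $p$, which follows because if none were in $p$ then all complements would be in $p$ and hence so would their intersection, the empty set, contradicting (1); and two distinct cosets cannot both be in $p$ since their intersection is empty. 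No part of this is genuinely hard; the only subtlety is making sure the quantifier manipulation in the definition of $p \circ q$ is applied in the right order, but since $\Zn$ is abelian, $\a + \b = \b + \a$ and the asymmetry of the definition is harmless here.
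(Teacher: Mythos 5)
Your proof is correct and follows essentially the same route as the paper: reduce to the finite-index cosets of the subgroup, note that one coset is $p$-big, and use $p \circ p = p$ to force that coset to be the subgroup itself (the paper extracts a witness pair $\a, \b$ in the $p$-big coset with $\a + \b$ in it, giving $\z + \z \equiv \z$, while you invoke uniqueness of the $p$-big coset — a negligible difference). Only the aside ``since $H \subseteq B$'' is imprecise, as that inclusion holds only once you know $B = H$, but nothing in your argument depends on it.
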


\begin{proof}
A subgroup $L$ of rank $n$ necessarily has finite index. As an ultrafilter, $p$
thus has to contain one of the cosets, say $\z + L$, and as an idempotent, it then has to
contain the set
\[
	\set{\a \in \z + L}{\set{\b \in \z + L}{\a + \b \in \z + L} \in p}
\]
as well. In particular, that set is nonempty. The resulting equation $\z + \z \equiv
\z \mod L$ gives $\z + L = L$, and we can conclude that $L$ itself is $p$-big. 
\end{proof}

We now define the \define{dimension} of an ultrafilter $p$, denoted $\dim p$, to be
the smallest possible rank of a $p$-big subgroup of $\Zn$. Since we expect $p$-big
sets to be large (especially when $p$ is an idempotent), it would be nice if the
dimension of an ultrafilter in $\beta \Zn$ was always $n$. This is not true; for
instance, the principal ultrafilter generated by $0$ is idempotent, and has dimension
zero. But as the following lemma shows, in all such examples, the ultrafilter in
question really lives on a smaller group.

\begin{lem} \label{lem:Dimension}
Let $p \in \beta\Zn$ be an ultrafilter, of dimension $s \in \{0, \dotsc, n\}$.
If $G \subseteq \Zn$ is an arbitrary $p$-big subgroup of rank $s$, then there
is an injective group homomorphism $\phi \colon \Zn[s] \to \Zn$ with image $G$,
and an $s$-dimensional ultrafilter $q \in \beta \Zn[s]$, such that $p = \phis(q)$. If
$p$ is idempotent, then any such $q$ is idempotent as well.
\end{lem}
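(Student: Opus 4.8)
The strategy is to build the map $\phi$ purely from linear algebra over $\Z$, and then to obtain $q$ by pulling back $p$ along $\phi$, checking that the fibers of $\phi$ behave well enough for the push-forward identity $p = \phis(q)$ to hold. First I would choose a $\Z$-basis $e_1, \dotsc, e_s$ of the rank-$s$ subgroup $G$, and define $\phi \colon \Zn[s] \to \Zn$ by $\phi(m_1, \dotsc, m_s) = \sum_i m_i e_i$; this is visibly an injective group homomorphism with image $G$. Since $\phi$ is multiplicative (a group homomorphism between the additive groups, viewed as semigroups), Lemma~\ref{lem:phistar} applies: $\phis \colon \beta\Zn[s] \to \beta\Zn$ is continuous, it preserves idempotents, and a set $B \subseteq \Zn[s]$ is $\phis(q)$-big if and only if $\phi^{-1}(B)$ is $q$-big.

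The main work is to produce $q \in \beta\Zn[s]$ with $\phis(q) = p$. The natural candidate is
\[
	q = \set{B \subseteq \Zn[s]}{\phi(B) \cup (\Zn \setminus G) \in p},
\]
or equivalently $q = \set{B}{\phi(B) \in p}$ once we know $G \in p$. One has to verify that this collection is an ultrafilter on $\Zn[s]$ — closure under supersets, finite intersections, and the dichotomy $B \in q$ or $(\Zn[s] \setminus B) \in q$ — and here injectivity of $\phi$ is exactly what makes the dichotomy work, because $\phi$ being injective means $\phi(\Zn[s] \setminus B) = G \setminus \phi(B)$, so for $C = \phi(B)$ one of $C$, $(\Zn \setminus C)$ lies in $p$, and since $G \in p$, intersecting with $G$ shows one of $\phi(B)$, $G \setminus \phi(B)$ is $p$-big. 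That $G$ itself is $p$-big is immediate from the hypothesis that $G$ is a $p$-big subgroup of rank $s = \dim p$. Once $q$ is known to be an ultrafilter, the identity $\phis(q) = p$ follows by unwinding definitions: for $A \subseteq \Zn$, using Lemma~\ref{lem:phistar}, $A \in \phis(q)$ iff $\phi^{-1}(A) \in q$ iff $\phi(\phi^{-1}(A)) = A \cap G \in p$, and since $G \in p$ this holds iff $A \in p$.

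Next, $q$ has dimension $s$: any $q$-big subgroup $H \leq \Zn[s]$ has $\phi(H) \leq G \leq \Zn$ a subgroup, and $\phi(H) = \phi(\phi^{-1}(\phi(H)))$ with $\phi^{-1}(\phi(H)) = H$ by injectivity, so $\phi(H)$ is $p$-big; hence $\rk \phi(H) \geq \dim p = s$, and as $\phi(H) \leq G$ has rank $\leq s$ while $\phi$ is injective, $\rk H = \rk \phi(H) = s$. So $\dim q \geq s$, and since $\Zn[s]$ itself is a rank-$s$ $q$-big subgroup, $\dim q = s$. Finally, if $p$ is idempotent: given any $q$ with $\phis(q) = p$, the image $\phis(q)$ is idempotent, but this does not by itself force $q$ to be idempotent; here I would invoke that $\phi$ is injective, so $\phis$ is injective on $\beta\Zn[s]$ (a set $B$ is determined by $\phi(B)$, hence $q$ is recovered from $\phis(q)$ via $B \in q \iff \phi(B) \cap G \in \phis(q)$), and then $\phis(q \circ q) = \phis(q)\,\phis(q) = p \circ p = p = \phis(q)$ gives $q \circ q = q$ by injectivity.

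**Main obstacle.** The one step requiring genuine care is the injectivity of $\phis$ on all of $\beta\Zn[s]$, which is needed both for the well-definedness argument and for transporting idempotency; it rests on $\phi^{-1}(\phi(B)) = B$, i.e.\ on injectivity of $\phi$ as a set map, and on the fact that $\Zn[s] \in q$ forces membership questions in $q$ to be decided by their $\phi$-images inside $p$. Everything else is a routine check of the ultrafilter axioms and an unwinding of Lemma~\ref{lem:phistar}.
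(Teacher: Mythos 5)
Your proposal is correct and follows essentially the same route as the paper: defining $q = \set{B \subseteq \Zn[s]}{\phi(B) \in p}$, verifying the ultrafilter axioms via injectivity of $\phi$ and $G \in p$, checking $\dim q = s$ by pushing $q$-big subgroups forward, and deducing idempotence from $\phis(q \circ q) = p$ together with the fact that $q$ is uniquely determined by $\phis(q) = p$. You simply spell out the verifications the paper leaves as ``easily verified.''
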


\begin{proof}
Let $G$ be a $p$-big subgroup of rank $s$ in $\Zn$. Since it is free, it is
isomorphic to $\Zn[s]$, and so there is an injective group homomorphism $\phi \colon
\Zn[s] \to \Zn$ whose image is exactly $G$. Define
\[
	q = \set{A \subseteq \Zn[s]}{\phi(A) \in p};
\]
since $G$ is $p$-big, it is easily verified that $q \in \beta \Zn[s]$, and that
$\phis(q) = p$. Now $q$ has to have dimension $s$, for otherwise $\Zn[s]$, and
therefore also $G$, would contain a $p$-big subgroup of smaller rank,
contradicting the choice of $s$.

Now assume that $p$ is an idempotent ultrafilter. Since $\phis$ is a homomorphism, we
get
\[
	\phis(q \circ q) = \phis(q) \circ \phis(q) = p \circ p = p;
\]
but as $q$ is obviously uniquely determined by the condition that $\phis(q) = p$, it
follows that $q \circ q = q$, and so $q$ is idempotent as well.
\end{proof}

\subsection*{\texorpdfstring{IP-sets and $\beta \mathcal{F}$}{IP-sets and ultrafilters}}

Our second example is the Stone-\v Cech compactification of $\mathcal{F}$, the set of
finite nonempty subsets of $\N$. For any two such finite sets $\alpha$ and $\beta$,
we may form their union $\alpha \cup \beta$; this operation makes $\mathcal{F}$ into
a commutative semigroup. An ultrafilter in this setting is now a set of sets of
finite subsets of $\N$; to avoid confusion, we shall reserve the letters $\alpha,
\beta, \gamma$ for points of $\mathcal{F}$.  We also continue to write $\circ$ for
the semigroup operation on $\beta \mathcal{F}$. The character of this operation is
utterly different from addition on $\beta \Z$; for instance, any principal
ultrafilter is now idempotent.

We are mostly going to look at IP-sets in $\N$ and $\mathcal{F}$ from the point
of view of $\beta \mathcal{F}$. Since the semigroup operation on $\NN$ is addition,
an IP-set is now a set of the form
\[
	\Set{\sum_{i \in \alpha} n_i}{\text{$\alpha \in \mathcal{F}$}},
\]
where $\seq{n_i}{i \in \NN}$ is a sequence of positive integers. This can also be
considered as a map
\[
	\nbull \colon \mathcal{F} \to \N, \qquad \alpha \mapsto n_{\alpha} 
		= \sum_{i \in \alpha} n_i
\]
that satisfies
\begin{equation} \label{eq:nadditive}
	n_{\alpha \cup \beta} = n_{\alpha} + n_{\beta}
\end{equation}
for disjoint $\alpha, \beta \in \mathcal{F}$. It induces a map $\phis[n]$ from $\beta
\mathcal{F}$ to $\beta \N$ that we would like to be structure-preserving, in
particular with regard to idempotents, but this cannot be true. The problem is that
the map $\nbull$ fails to be additive because \eqref{eq:nadditive} holds for
disjoint sets only. One answer is to look at a subclass of ultrafilters in $\beta
\mathcal{F}$, excluding---among other things---the principal ones.

One quickly sees that in order to make use of \eqref{eq:nadditive}, it has to be
possible, when choosing $\beta$ inside a member of some ultrafilter, to make it disjoint
from a given $\alpha$. To accomplish this, we introduce the following notion. We let
\begin{align*}
	C_n = \set{\alpha \in \mathcal{F}}{n \in \alpha}		&& \text{(for $n \in \N$),}
\end{align*}
and call an ultrafilter \emph{congested} if it contains one of the $C_n$, or \emph{uncongested}
if it contains none. Certainly, every principal ultrafilter is congested. 
We also introduce the notation $\alpha < \beta$ to express that the maximum of the 
finite set $\alpha$ is less than the minimum of $\beta$. 

Whenever $p$ is an uncongested ultrafilter and $\alpha \in \mathcal{F}$, the set
\[
	\set{\beta \in \mathcal{F}}{\alpha < \beta}
\]
is obviously $p$-big, being an intersection of complements of certain $C_n$. This means
that for any set $A \in p$, the set of $\beta \in A$ with $\alpha < \beta$ is still
$p$-big, and so we can impose the even stronger condition $\alpha < \beta$ when
choosing an element $\beta$ from $A$. 

Now let us see what the set $\betaFun$ of all uncongested ultrafilters looks like.
An \emph{IP-ring} is a special type of IP-set in $\mathcal{F}$; it consists of an 
infinite sequence $\seq{\alpha_i}{i \in \N}$ of elements of $\mathcal{F}$ satisfying 
$\alpha_0 < \alpha_1 < \alpha_2 < \dotsb$, together with all possible finite unions of these. 
The notation
\[
	\FU{\alpha_i}{i \in \NN}
\]
will be used for such IP-rings. We then have the following result about $\betaFun$ and its
connection with IP-rings.

\begin{lem} \label{lem:Uncongested}
$\betaFun$ is a closed (hence compact) sub-semigroup of $\beta \mathcal{F}$.
Every idempotent $p \in \betaFun$ has the property that if a set is $p$-big, it 
contains an IP-ring.
Conversely, every IP-ring is a member of some uncongested idempotent.
\end{lem}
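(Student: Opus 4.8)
The plan is to prove the three assertions in turn. For the first, notice that $p$ is uncongested precisely when $\mathcal{F}\setminus C_n\in p$ for every $n$, so that $\betaFun=\bigcap_{n\in\N}\set{p\in\beta\mathcal{F}}{C_n\notin p}$ is an intersection of clopen sets and hence closed; being a closed subset of the compact space $\beta\mathcal{F}$, it is compact. To see that it is closed under $\circ$, observe that $\alpha\cup\beta\in C_n$ holds exactly when $n\in\alpha$ or $n\in\beta$, so $\set{\beta\in\mathcal{F}}{\alpha\cup\beta\in C_n}$ equals $\mathcal{F}$ when $n\in\alpha$ and equals $C_n$ otherwise. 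Thus, for uncongested $q$, this set is $q$-big exactly when $n\in\alpha$, whence $\set{\alpha\in\mathcal{F}}{\set{\beta}{\alpha\cup\beta\in C_n}\in q}=C_n$; by the definition of $\circ$ this means $C_n\in p\circ q$ would force $C_n\in p$. So $p,q\in\betaFun$ implies $p\circ q\in\betaFun$.

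The \emph{second} assertion is the substantive one, and I would prove it by extending the argument behind Lemma~\ref{lem:FPsets} (the ultrafilter proof of Hindman's theorem), now producing an infinite sequence and keeping track of the ordering. Write $A-\gamma=\set{\beta\in\mathcal{F}}{\gamma\cup\beta\in A}$. For idempotent $p$, the relation $p\circ p=p$ guarantees that $A^{\star}:=\set{\gamma\in A}{A-\gamma\in p}$ is $p$-big whenever $A$ is, while the identity $(A-\gamma)-\delta=A-(\gamma\cup\delta)$ shows that $A^{\star}-\gamma=(A-\gamma)^{\star}$ is $p$-big for every $\gamma\in A^{\star}$. Now build $\alpha_0<\alpha_1<\dotsb$ recursively: suppose $\alpha_0,\dotsc,\alpha_{k-1}$ have been chosen so that every nonempty union $\gamma_F=\bigcup_{i\in F}\alpha_i$ with $F\subseteq\{0,\dotsc,k-1\}$ lies in $A^{\star}$. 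Then $A^{\star}\cap\bigcap_{\emptyset\ne F}(A^{\star}-\gamma_F)$ is $p$-big, and since $p$ is uncongested so is $\set{\beta}{\alpha_{k-1}<\beta}$, being a finite intersection of complements of certain $C_n$ (as noted just before the statement of the lemma); choosing $\alpha_k$ in the intersection of these two sets yields $\alpha_{k-1}<\alpha_k$ together with $\gamma_F\cup\alpha_k\in A^{\star}$ for all such $F$, so that every nonempty union of $\alpha_0,\dotsc,\alpha_k$ again lies in $A^{\star}\subseteq A$. The sequence obtained this way generates an IP-ring $\FU{\alpha_i}{i\in\N}$ contained in $A$.

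For the converse, let $R=\FU{\alpha_i}{i\in\N}$ be an IP-ring and put $R_k=\FU{\alpha_i}{i\ge k}$, so that $R=R_0\supseteq R_1\supseteq\dotsb$. Consider $\Gamma=\bigcap_{k\in\N}\set{p\in\beta\mathcal{F}}{R_k\in p}$. This set is closed; it is nonempty, since the $\set{p\in\beta\mathcal{F}}{R_k\in p}$ form a nested family of nonempty clopen sets in the compact space $\beta\mathcal{F}$; it is contained in $\betaFun$, because for each $n$ one has $R_k\cap C_n=\emptyset$ as soon as $\min\alpha_k>n$, which occurs for large $k$ since $\alpha_0<\alpha_1<\dotsb$ forces $\min\alpha_k\to\infty$; and it is a sub-semigroup, because for $\alpha\in R_k$, writing $\alpha=\bigcup_{i\in F}\alpha_i$ and $\ell=\max F+1$, one has $\set{\beta}{\alpha\cup\beta\in R_k}\supseteq R_{\ell}$, so that $R_k\in p\circ q$ whenever $p,q\in\Gamma$. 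By the Ellis--Numakura lemma---a nonempty compact left-topological semigroup contains an idempotent---$\Gamma$ contains an idempotent $p$, and this $p$ is uncongested and satisfies $R=R_0\in p$.

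The main obstacle is the second step: checking that $A^{\star}$ is $p$-big and stable under the partial ``subtraction'' $A\mapsto A-\gamma$, and arranging the recursion so that the constraint $\alpha_{k-1}<\alpha_k$ can always be imposed without spoiling the property that all finite unions remain inside $A^{\star}$. The first and third assertions are mostly a matter of identifying the relevant sets, the third additionally invoking the standard existence of idempotents in compact left-topological semigroups.
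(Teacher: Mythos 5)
Your proof is correct and follows essentially the same route as the paper: the same clopen-intersection and contrapositive arguments for the first assertion, the Galvin--Glazer recursion with the extra constraint $\alpha_{k-1}<\beta$ (available because $p$ is uncongested) for the second, and the subsemigroup $\bigcap_k\cl{R_k}$ plus Ellis' theorem for the third. The only difference is that you write out in full the Hindman-type recursion that the paper merely cites as ``the same proof as for Hindman's theorem.''
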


\begin{proof}
By definition,
\[
	\betaFun = \bigcap_{n \in \N} \cl{\mathcal{F} \setminus C_n}
\]
is an intersection of closed sets, hence closed. Let us show that it is also a
semigroup. If a product $p \circ q$ is congested, it has to contain $C_n$ for some
$n$, and so
\[
	\set{\alpha}{\set{\beta}{n \in \alpha \cup \beta} \in q} \in p.
\]  
If $C_n$ is not in $p$, one of the $\alpha$ in the outer set does not contain $n$, in 
which case the inner $q$-big set equals $C_n$. Either way, one
of the two factors is congested; products of uncongested ultrafilters are therefore 
uncongested. It follows that $\betaFun$ is a compact semigroup.

To verify the second statement---existence of IP-rings in $p$-big sets for idempotent
$p$---the same proof as for Hindman's theorem will work; when choosing elements, one 
simply follows the recipe mentioned above. 

Third, let us show that every IP-ring is contained in some uncongested idempotent
(this also proves the existence of uncongested idempotents). Let $\FU{\alpha_i}{i \in \NN}$
be an IP-ring and set $A_n = \FU{\alpha_i}{i \geq n}$, with obvious
meaning. Following the usual procedure, we will show that the intersection
\[
	\mathcal{A} = \bigcap_{n \in \N} \cl{A_n}
\] 
is a closed, nonempty subsemigroup of $\betaFun$; by Ellis' theorem it then has
idempotents, and any such idempotent is uncongested and contains our IP-ring.

$\mathcal{A}$ is certainly closed and nonempty (use the finite intersection property of
the compact space $\beta \mathcal{F}$). To verify that it is a semigroup, we need to 
check  that for $p, q \in \mathcal{A}$ and any $n \in \N$, the set $A_n$ is a member of
$p \circ q$. 
Given $\alpha \in A_n$, there is some $k > n$ for which $\alpha \cup \beta \in A_n$
for every $\beta \in A_k$. Hence
\[
	\set{\beta}{\alpha \cup \beta \in A_n}
\]
is a $q$-big set for every $\alpha \in A_n$; as a consequence, we have
\begin{align*}
	\set{\alpha}{\set{\beta}{\alpha \cup \beta \in A_n} \in q} \in p 
		&& \text{(for $n \in \N$),}
\end{align*}
and thus $p \circ q \in \mathcal{A}$. Finally, every $p \in \mathcal{A}$ has to be 
uncongested, for $A_{n+1}$ and $C_n$ are always disjoint. 
\end{proof}

One conclusion is that uncongested idempotents do exist; more importantly, they
naturally arise when one is looking at IP-rings in terms of ultrafilters. Indeed, the
lemma is the exact analogon of Hindman's theorem for the case of IP-rings. 

Another useful property of uncongested ultrafilters is stated in the last lemma of
this section; it closely follows our thoughts after \eqref{eq:nadditive}.

\begin{lem} \label{lem:uncongestedidp}
For any IP-set $\nbull \colon \mathcal{F} \to \NN$, the induced map
$\phis[n] \colon \betaFun \to \beta\NN$ is a homomorphism of semigroups. In
particular, $\phis[n](p)$ is idempotent for each uncongested idempotent $p \in
\betaFun$.
\end{lem}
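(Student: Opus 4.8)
The plan is to show directly that $\phis[n](p \circ q) = \phis[n](p) \circ \phis[n](q)$ for all $p, q \in \betaFun$, since once the map $\phis[n]$ is known to be multiplicative, the statement about idempotents follows immediately (the image of an idempotent under a semigroup homomorphism is idempotent). By Lemma~\ref{lem:phistar}, a set $B \subseteq \NN$ is $\phis[n](r)$-big if and only if $\nbull^{-1}(B) \in r$, so it suffices to verify that for every $B \subseteq \NN$,
\[
	B \in \phis[n](p) \circ \phis[n](q) \quad \Longleftrightarrow \quad \nbull^{-1}(B) \in p \circ q.
\]

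First I would unwind the left-hand side using the definition of $\circ$ on $\beta\NN$: $B \in \phis[n](p) \circ \phis[n](q)$ means $\set{m \in \NN}{\set{\ell \in \NN}{m + \ell \in B} \in \phis[n](q)} \in \phis[n](p)$, which by Lemma~\ref{lem:phistar} translates to
\[
	\Set{\alpha \in \mathcal{F}}{\set{\ell \in \NN}{n_{\alpha} + \ell \in B} \in \phis[n](q)} \in p,
\]
and applying Lemma~\ref{lem:phistar} once more to the inner condition, this is
\[
	\Set{\alpha \in \mathcal{F}}{\set{\beta \in \mathcal{F}}{n_{\alpha} + n_{\beta} \in B} \in q} \in p.
\]
On the other hand, $\nbull^{-1}(B) \in p \circ q$ unwinds, by the definition of $\circ$ on $\beta\mathcal{F}$, to $\set{\alpha}{\set{\beta}{\alpha \cup \beta \in \nbull^{-1}(B)} \in q} \in p$, i.e.
\[
	\Set{\alpha \in \mathcal{F}}{\set{\beta \in \mathcal{F}}{n_{\alpha \cup \beta} \in B} \in q} \in p.
\]

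The two displayed conditions differ only in that one involves $n_{\alpha} + n_{\beta}$ and the other $n_{\alpha \cup \beta}$, and by \eqref{eq:nadditive} these agree precisely when $\alpha$ and $\beta$ are disjoint. This is where uncongestedness of $p$ enters, and it is the crux of the argument. As noted in the discussion following \eqref{eq:nadditive}, for an uncongested $p$ and any fixed $\alpha \in \mathcal{F}$, the set $\set{\beta}{\alpha < \beta}$ (hence a fortiori $\set{\beta}{\alpha \cap \beta = \emptyset}$) is $q$-big for every $q \in \betaFun$; more carefully, I would argue as follows. Suppose $\nbull^{-1}(B) \in p \circ q$. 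For each $\alpha$ in the corresponding outer $p$-big set, the set $\set{\beta}{n_{\alpha \cup \beta} \in B}$ is $q$-big; intersecting it with the $q$-big set $\set{\beta}{\alpha \cap \beta = \emptyset}$ (which is $q$-big because $q$ is uncongested, being an element of $\betaFun$), we get a $q$-big set on which $n_{\alpha \cup \beta} = n_{\alpha} + n_{\beta}$, so $\set{\beta}{n_{\alpha} + n_{\beta} \in B}$ is $q$-big too. Hence the outer $p$-big set for $\nbull^{-1}(B) \in p \circ q$ is contained in the outer set for $B \in \phis[n](p) \circ \phis[n](q)$, giving one implication; the reverse implication is symmetric, using the same disjointness trick. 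This establishes that $\phis[n]$ is multiplicative on $\betaFun$. Note that this argument only uses that $q$ is uncongested, but since $\betaFun$ is a subsemigroup (Lemma~\ref{lem:Uncongested}) we may as well take both $p, q \in \betaFun$; in particular, for an uncongested idempotent $p$ we get $\phis[n](p) \circ \phis[n](p) = \phis[n](p \circ p) = \phis[n](p)$, so $\phis[n](p)$ is idempotent, completing the proof. I expect the only real subtlety to be keeping the nested ``$p$-many / $q$-many'' quantifiers straight and making sure the disjointness set is inserted at the inner ($q$) level where it is actually needed.
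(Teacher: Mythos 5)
Your proof is correct and follows essentially the same route as the paper: unwind both sides via Lemma~\ref{lem:phistar} and the definition of $\circ$, and use uncongestedness of $q$ to intersect the inner $q$-big set with a set of $\beta$ disjoint from (in the paper, beyond) $\alpha$, where $n_{\alpha\cup\beta}=n_{\alpha}+n_{\beta}$ makes the two inner conditions agree. The only cosmetic difference is that you use the disjointness set $\set{\beta}{\alpha\cap\beta=\emptyset}$ where the paper uses $\set{\beta}{\alpha<\beta}$; both are $q$-big for the same reason (finite intersections of complements of the $C_n$), so the arguments coincide.
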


\begin{proof}
Let $p, q \in \betaFun$ be arbitrary uncongested ultrafilters; we need to show that 
\[
	\phis[n](p \circ q) = \phis[n](p) \circ \phis[n](q).
\]
For any subset $A \subseteq \NN$, we have $A \in \phis[n](p)$ exactly when
$\set{\alpha}{n_{\alpha} \in A} \in p$; referring back to the definition of the
operation $\circ$ on page~\pageref{eq:circ}, we then find that
\begin{align*}
	A \in \phis[n](p \circ q) \quad & \Longleftrightarrow \quad
		\set{\alpha}{\set{\beta}{n_{\alpha \cup \beta} \in A} \in q} \in p,
\intertext{while}
	A \in \phis[n](p) \circ \phis[n](q) \quad & \Longleftrightarrow \quad
		\set{\alpha}{\set{\beta}{n_{\alpha} + n_{\beta} \in A} \in q} \in p.
\end{align*}
But since $q$ is uncongested, these two conditions are actually equivalent. Indeed,
given $\alpha \in \mathcal{F}$, we have $n_{\alpha \cup \beta} = n_{\alpha} +
n_{\beta}$ whenever $\beta > \alpha$, and so
\begin{align*}
	\set{\beta}{n_{\alpha \cup \beta} \in A} \cap \set{\beta}{\beta > \alpha}
	&= \set{\beta}{\text{$n_{\alpha \cup \beta} \in A$ and $\beta > \alpha$}} \\
	&= \set{\beta}{\text{$n_{\alpha} + n_{\beta} \in A$ and $\beta > \alpha$}} \\
	&= \set{\beta}{n_{\alpha} + n_{\beta} \in A} \cap \set{\beta}{\beta > \alpha}.
\end{align*}
Now $q$ always contains the set $\set{\beta}{\beta > \alpha}$, and so
\[
	\set{\beta}{n_{\alpha \cup \beta} \in A} \in q \quad \Longleftrightarrow \quad
		\set{\beta}{n_{\alpha} + n_{\beta} \in A} \in q.
\]
This shows that $A \in \phis[n](p \circ q)$ if, and only if, $A \in \phis[n](p) \circ
\phis[n](q)$, and thus proves the lemma.
\end{proof}

This result will later allow us to transfer results from $\beta\N$ or $\beta\Zn$ to
the space $\beta\mathcal{F}$. We shall see applications of this idea in
Section~\ref{sec:V}, after we have proved the main theorem.

\section{An extended example} \label{sec:II}

In this section, we want to give an in-depth discussion of a special case of the main
results, Theorem~\ref{thm:MainA} and Theorem~\ref{thm:MainB}.  We hope that this will
help the reader understand the character of the argument---in particular, how the
induction used in the proof works. We are going to consider the following theorem.

\begin{thmsec} \label{thm:A}
Let $U$ be an arbitrary unitary operator on a Hilbert space $\Hil$, and let 
$\mbull, \nbull \colon \mathcal{F} \to \N$ be any two IP-sets.  If $p \in \beta
\mathcal{F}$ is any uncongested idempotent, the operator $P$ defined by the weak
operator limit 
\[
	P = \plim{p}_{\alpha} U^{m_{\alpha} n_{\alpha}}
\]
is an orthogonal projection. 
\end{thmsec}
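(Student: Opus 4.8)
The plan is to show that $P$ is \emph{normal} and that $P^{2}=P$; a normal idempotent operator is automatically an orthogonal projection, since for normal $P$ one has $\ker P=\ker\fust[P]=(\im P)^{\perp}$, and when $P^{2}=P$ the algebraic splitting $\Hil=\ker P\oplus\im P$ is thereby forced to be orthogonal. So these two properties suffice.

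First, $P$ is well defined, being a weak-operator limit taken in the (compact) closed unit ball of $\End{\Hil}$, so $\norm{P}\le 1$. Normality is immediate: all the operators $U^{m_{\alpha}n_{\alpha}}$ are powers of the single unitary $U$, hence commute with one another and with $U^{\pm 1}$; since one-sided multiplication and the passage to adjoints are continuous for the weak operator topology, $P$ commutes with $U^{\pm 1}$, and since $\fust[P]=\plim{p}_{\alpha}U^{-m_{\alpha}n_{\alpha}}$ it commutes with $\fust[P]$ as well. Thus everything reduces to the identity $P^{2}=P$.

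For this, continuity of one-sided multiplication gives $P^{2}=\plim{p}_{\alpha}\plim{p}_{\beta}U^{m_{\alpha}n_{\alpha}+m_{\beta}n_{\beta}}$, while the idempotent relation \eqref{eq:doubleplimit} gives $P=\plim{p}_{\alpha}\plim{p}_{\beta}U^{m_{\alpha\cup\beta}n_{\alpha\cup\beta}}$. Because $p$ is uncongested we may restrict the inner limits to the $p$-big set $\set{\beta}{\alpha<\beta}$, where $m_{\alpha\cup\beta}=m_{\alpha}+m_{\beta}$ and $n_{\alpha\cup\beta}=n_{\alpha}+n_{\beta}$, so that
\[
	m_{\alpha\cup\beta}n_{\alpha\cup\beta}=\bigl(m_{\alpha}n_{\alpha}+m_{\beta}n_{\beta}\bigr)+\bigl(m_{\alpha}n_{\beta}+m_{\beta}n_{\alpha}\bigr).
\]
Thus $P^{2}=P$ is equivalent to the assertion that, for $p$-many $\alpha$, the ``cross factor'' $U^{m_{\alpha}n_{\beta}+m_{\beta}n_{\alpha}}=(U^{m_{\alpha}})^{n_{\beta}}(U^{n_{\alpha}})^{m_{\beta}}$ may be deleted from the inner $\plim{p}_{\beta}$. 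At the level of scalars this is elementary and instructive: for $\abs{z}=1$, the same manipulation together with $\plim{p}_{\beta}(z^{c})^{n_{\beta}}=\plim{\phis[n](p)}_{k}(z^{c})^{k}=1$ (Lemma~\ref{lem:uncongestedidp}, plus the immediate fact that $\plim{q}_{k}w^{k}=1$ whenever $\abs{w}=1$ and $q$ is idempotent) and the joint continuity of multiplication on the circle gives $\plim{p}_{\alpha}z^{m_{\alpha}n_{\alpha}}=\bigl(\plim{p}_{\alpha}z^{m_{\alpha}n_{\alpha}}\bigr)^{2}$, hence $\plim{p}_{\alpha}z^{m_{\alpha}n_{\alpha}}=1$; so $P$ acts as the identity on every eigenvector of $U$. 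The weak operator topology, however, lacks joint continuity of multiplication, so the cross factor cannot be pulled out directly --- indeed $\plim{p}_{\beta}(U^{m_{\alpha}})^{n_{\beta}}$ is in general only an orthogonal projection, and is $0$ for the bilateral shift.

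That the cross factor is nonetheless harmless once placed inside the outer limit $\plim{p}_{\alpha}U^{m_{\alpha}n_{\alpha}}(\,\cdot\,)$ is the heart of the matter, and I would establish it by the induction that Section~\ref{sec:II} is meant to illustrate: an induction on the complexity of the exponent occurring in the limit. Its base case is the linear one --- for any idempotent $q\in\beta\ZZ$ the operator $\plim{q}_{k}U^{k}$ is an orthogonal projection, which follows from normality together with $\plim{q}_{k}\plim{q}_{l}U^{k+l}=\plim{q}_{k}U^{k}$. The inductive step is an application of van der Corput's trick (Lemma~\ref{lem:VanDerCorput}) to the appropriate error term $f$: passing from $f(\alpha)$ to $\inner{f(\alpha\cup\beta)}{f(\beta)}$ cancels the ``top'' term $m_{\beta}n_{\beta}$ of the exponent --- since $m_{\alpha\cup\beta}n_{\alpha\cup\beta}-m_{\beta}n_{\beta}=m_{\alpha}n_{\alpha}+m_{\alpha}n_{\beta}+m_{\beta}n_{\alpha}$ is linear in the running variable --- and thereby reduces the situation to one of strictly lower complexity, ultimately to the base case. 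I expect the main obstacle to be exactly this inductive bookkeeping: pinning down a complexity that van der Corput is guaranteed to decrease, and keeping control of the many residual terms produced by the expansion --- in particular those involving the limit $P$ itself --- so that they, too, fall under the inductive hypothesis.
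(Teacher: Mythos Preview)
Your setup is right---normality is free, and $P^{2}=P$ reduces to controlling the cross factor $U^{m_{\alpha}n_{\beta}+m_{\beta}n_{\alpha}}$---but the mechanism you propose has a real gap. Van der Corput's trick does not prove that two limits agree; it proves that a single weak limit is \emph{zero}. Applied to $f(\alpha)=U^{m_{\alpha}n_{\alpha}}x$, the hypothesis of Lemma~\ref{lem:VanDerCorput} becomes $\plim{p}_{\alpha}\plim{p}_{\beta}\inner{U^{m_{\alpha}n_{\beta}+m_{\beta}n_{\alpha}}x}{U^{-m_{\alpha}n_{\alpha}}x}=0$, and when it holds the conclusion is $Px=0$, not $P^{2}x=Px$. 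So even granting an inductive hypothesis on the lower-complexity operator $Q=\plim{p}_{\alpha}\plim{p}_{\beta}U^{m_{\alpha}n_{\beta}+m_{\beta}n_{\alpha}}$, van der Corput only handles the subspace $\ker Q$. The complementary subspace $\im Q$ requires a separate argument: there $Qx=x$ upgrades the weak convergence to strong convergence, which lets you slide the cross factor out and obtain $P^{2}x=Px$ directly. The induction is therefore not ``van der Corput lowers complexity'' but rather ``assume the lower-complexity $Q$ is a projection, split $\Hil=\ker Q\oplus\im Q$, and run two different arguments on the two pieces.''

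There is a second gap hidden in the phrase ``ultimately to the base case.'' The operator $Q$ involves a \emph{double} $p$-limit with exponent linear in each variable, so it is not an instance of your base case $\plim{q}_{k}U^{k}$. Proving that $Q$ is a projection is itself nontrivial: the paper does it by first transporting the problem from $\beta\mathcal{F}$ to $\beta\Zn[2]$ via $\alpha\mapsto(m_{\alpha},n_{\alpha})$ (so that one works with a genuine idempotent $q\in\beta\Zn[2]$ and the polynomial $z_{1}z_{2}$), and then invoking the \emph{dimension} of $q$ to build the right splitting for $Q$. Without passing to $\Zn[2]$ this dimension concept is invisible, and the ``inductive bookkeeping'' you anticipate becomes genuinely hard---for instance, when $m_{\bullet}$ and $n_{\bullet}$ are essentially the same IP-set the problem collapses to a one-variable one, and your scheme has no way to detect this.
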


The given (weak operator) limit abbreviates the equality
\[
	\inner{P x}{y} = \plim{p}_{\alpha} \inner{U^{m_{\alpha} n_{\alpha}} x}{y}
\]
for all $x, y \in \Hil$.

We are not going to prove this directly, because the presence of the two
IP-sets is inconvenient, in that it obscures part of the underlying structure.
Suppose, for example, that the two IP-sets were (more or less) equal; then the
essentially two-dimensional situation of the theorem would collapse down to a
one-dimensional one, and surely something in the proof will have to change, too. The
problem, in other words, is that there appears to be a notion of \emph{dimension}
behind the theorem---but it is cumbersome to deal with dimension for IP-sets.

On the other hand, as shown by Lemma~\ref{lem:Dimension}, there is a good definition
of dimension for idempotent ultrafilters in $\beta\Zn[2]$. Instead of trying to prove
Theorem~\ref{thm:A} in its present form, we should pass instead to the group
$\Zn[2]$, where we can talk about the rank of subgroups and the dimension of
ultrafilters. 

To this end, define a map $\phi \colon \mathcal{F} \to \Zn[2]$ by 
\begin{align*}
	\phi(\alpha) = \bigl( m_{\alpha}, n_{\alpha} \bigr)		
		&& \text{(for $\alpha \in \mathcal{F}$).}
\end{align*}
Because $p$ is uncongested, the new ultrafilter $q = \phis(p)$ is an idempotent in 
$\beta \Zn[2]$ by Lemma~\ref{lem:uncongestedidp}; moreover, Lemma~\ref{lem:phistar}
changes the limit defining $P$ into
\[
	P = \plim{q}_{\z} U^{z_1 z_2},
\]
where the notation $\z$ is again used for elements of $\Zn[2]$. 
The following more general statement now suggests itself.

\begin{thmsec} \label{thm:B} 
	Let $U$ be a unitary operator on a Hilbert space $\Hil$.  If $q \in \beta \Zn[2]$ 
	is any idempotent, the operator $P$ defined by the weak operator limit
	\[
		P = \plim{q}_{\z} U^{z_1 z_2}
	\]
	is an orthogonal projection.  
\end{thmsec}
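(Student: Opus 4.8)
The plan is to verify that $P$ is self-adjoint and idempotent as an operator, working entirely with the weak operator limit along $q$ and exploiting the idempotency $q \circ q = q$ via Lemma~\ref{lem:plimits} in the form \eqref{eq:doubleplimit}. Self-adjointness is the easy half: since $U$ is unitary, $(U^{z_1 z_2})^{\ast} = U^{-z_1 z_2}$, and for an idempotent $q \in \beta\Zn[2]$ one expects $\plim{q}_{\z} U^{-z_1 z_2}$ to equal $\plim{q}_{\z} U^{z_1 z_2}$; the cleanest way to see this is that the ``reflection'' map $\z \mapsto -\z$ pushes $q$ forward to another idempotent, and one checks it must coincide with $q$ on the relevant sets, or alternatively one shows directly that $P^{\ast} = P$ using that $-\z$ ranges over a $q$-big set whenever $\z$ does (because $q$-big subgroups are symmetric). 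So the real content is showing $P^2 = P$.

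For $P^2 = P$, the key computation is
\[
	P^2 = \plim{q}_{\z} \plim{q}_{\w} U^{z_1 z_2} U^{w_1 w_2}
		= \plim{q}_{\z} \plim{q}_{\w} U^{z_1 z_2 + w_1 w_2},
\]
using weak continuity of $v \mapsto U^{z_1 z_2} v$ to pull $U^{z_1 z_2}$ inside the inner limit. On the other hand, by \eqref{eq:doubleplimit} applied to the map $\z \mapsto U^{f(\z)}$ for the appropriate $f$, we have $P = \plim{q}_{\z} U^{z_1 z_2} = \plim{q}_{\z}\plim{q}_{\w} U^{(z_1+w_1)(z_2+w_2)}$. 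Expanding, $(z_1+w_1)(z_2+w_2) = z_1 z_2 + w_1 w_2 + (z_1 w_2 + w_1 z_2)$, so comparing the two expressions it suffices to show that the cross term contributes nothing, i.e.
\[
	\plim{q}_{\z} \plim{q}_{\w} U^{z_1 z_2 + w_1 w_2 + z_1 w_2 + z_2 w_1}
		= \plim{q}_{\z} \plim{q}_{\w} U^{z_1 z_2 + w_1 w_2}.
\]
I would reduce to the case where $q$ has full dimension $2$: by Lemma~\ref{lem:Dimension} a lower-dimensional $q$ is $\phis(q')$ for an idempotent $q'$ on $\Zn[1]$ or $\Zn[0]$, and $\phi$ is linear, so $z_1 z_2$ pulls back to a quadratic form in one (or zero) variables, which is a case one handles separately (in dimension $\le 1$ the operator $U^{z_1 z_2}$ becomes $U^{c z^2}$ or the identity, and one argues directly, or even observes the statement is easier).

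So assume $\dim q = 2$. The heart of the matter is then a van der Corput-type argument, and I expect this to be the main obstacle. The idea: fix $\z$ for the moment and consider the inner limit in $\w$; one wants to show the ``twisted'' family $\w \mapsto U^{z_1 w_2 + z_2 w_1} v$ has $q$-limit equal to something independent of the twist, by showing the relevant weak limit of $U^{(z_1 w_2 + z_2 w_1)} v - (\text{untwisted})$ vanishes. Concretely I would apply Lemma~\ref{lem:VanDerCorput} (van der Corput's trick for $p$-limits) to the map $\w \mapsto U^{z_1 w_2 + z_2 w_1}v - P'v$ for a suitable auxiliary vector, checking the hypothesis $\plim{q}_{\w}\plim{q}_{\w'} \inner{g(\w \circ \w')}{g(\w')} = 0$ by expanding the inner product, using unitarity of $U$ and \eqref{eq:doubleplimit} to collapse the double limit, and exploiting that for a $2$-dimensional idempotent $q$ the linear functional $\w \mapsto z_1 w_2 + z_2 w_1$ is, for $q$-many $\z$, non-degenerate, so that $U$ raised to it behaves like an ``aperiodic'' character and its $q$-limit against a fixed vector dies unless the vector lies in the appropriate invariant subspace. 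Managing the bookkeeping of which variables are held fixed while others are taken to their $q$-limit, and isolating the precise subspace on which $U^{z_1 w_2 + z_2 w_1}$ acts trivially, is where the argument needs care; the twin tools of \eqref{eq:doubleplimit} and Lemma~\ref{lem:VanDerCorput} are exactly what makes it go through cleanly, which is the point of having set them up.
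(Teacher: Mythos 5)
There is a genuine gap at the heart of the argument. Your ``reduction'' -- that it suffices to show $\plim{q}_{\z}\plim{q}_{\w}U^{z_1z_2+w_1w_2+z_1w_2+z_2w_1}=\plim{q}_{\z}\plim{q}_{\w}U^{z_1z_2+w_1w_2}$ -- is just a restatement of $P^2=P$, and everything after it is a placeholder rather than a proof. The paper's proof has two concrete ingredients that your sketch never supplies: (i) the auxiliary operators $Q_{\a}=\plim{q}_{\z}U^{a_1z_2+a_2z_1}$, which are orthogonal projections because the exponent is \emph{linear} in $\z$, and $Q=\plim{q}_{\a}Q_{\a}$, which is shown to be a projection by a separate dimension analysis on $q$ (Theorem~\ref{thm:C}: for $\dim q=2$ one splits $\Hil$ into $\bigcap_{\a,\b}\ker Q_{\a}Q_{\b}$ over rank-two pairs and its complement, and uses that for $\dim q = 2$ and any $\a\neq 0$, $q$-many $\b$ span a rank-two subgroup with $\a$); and (ii) the splitting $\Hil=\ker Q\oplus\im Q$, with van der Corput applied on $\ker Q$ (its hypothesis verified from the \emph{strong} convergence $\plim{q}_{\a}\norm{Q_{\a}x}=0$) and strong convergence via Lemma~\ref{lem:StrongConvergence} on $\im Q$. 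Your heuristic that for nondegenerate linear exponents ``$U$ raised to it behaves like an aperiodic character and its $q$-limit dies unless the vector lies in the appropriate invariant subspace'' is false as stated -- take $U=\mathrm{id}$, or any eigenvector of $U$: then $\plim{q}_{\w}U^{z_1w_2+z_2w_1}v$ never dies -- and identifying the correct dichotomy (which vectors it kills, which it fixes) \emph{is} the proof; naming ``a suitable auxiliary vector'' and ``the appropriate invariant subspace'' does not do it. Your dimension reduction is also misplaced: pulling $P$ back along Lemma~\ref{lem:Dimension} in the one-dimensional case leaves $\plim{q}_{z}U^{cz^2}$, a quadratic exponent that is every bit as hard as the original problem, not a case one ``handles directly.'' The paper avoids this by performing the dimension analysis only on the bilinear operator $Q$, where the $Q_{\a}$ are already projections.

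A secondary error: your self-adjointness argument does not work. Idempotent ultrafilters are not invariant under $\z\mapsto-\z$, and the fact that $q$-big \emph{subgroups} are symmetric does not give $\plim{q}_{\z}U^{-z_1z_2}=\plim{q}_{\z}U^{z_1z_2}$; the limit depends on $q$ itself, not merely on which subgroups it contains. Fortunately this half is not needed: $P$ is normal, being a weak limit of commuting unitaries (all the operators in sight lie in the abelian von Neumann algebra generated by $U$, cf.\ Lemma~\ref{lem:StrongConvergence}), and normality together with $P^2=P$ already yields that $P$ is an orthogonal projection by Lemma~\ref{lem:Projections}.
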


Even in this special case, a proof seems to require two separate steps. We begin by
introducing an auxiliary operator
\[
	Q = \plim{q}_{\a} \plim{q}_{\z} U^{a_1 z_2 + a_2 z_1},
\]
the polynomial in the exponent arising from the original $z_1 z_2$ as
\[
	(a_1 + z_1)(a_2 + z_2) - z_1 z_2 - a_1 a_2.
\]

\subsection*{Step 1}

For the time being, we are going to assume that $Q$ is a projection operator, and use
the splitting of the Hilbert space $\Hil = \ker Q \oplus \im Q$ it induces to prove
Theorem~\ref{thm:B}.  To show that $P$ is an orthogonal projection, we appeal to
Lemma~\ref{lem:Projections}: $P$ is clearly normal, being a limit of unitary
operators, and so all we need to do is prove the relation $P^2 = P$. To help with
that, let us also create, for each $\a \in \Zn[2]$, the operator
\[
	Q_{\a} = \plim{q}_{\z} U^{a_1 z_2 + a_2 z_1}.
\]
Since the polynomials involved are linear in $\z$, the reader will prove without much 
effort that each $Q_{\a}$ is an orthogonal projection; for example, one can use the
identity in \eqref{eq:doubleplimit} to show that $Q_{\a}^2 = Q_{\a}$,
and then apply Lemma~\ref{lem:Projections}. Of course, any two of those operators
commute, since they are all limits of powers of $U$.

The point is that under our assumption on $Q$, the weak operator limits
\[
	Q = \plim{q}_{\a} Q_{\a} = \plim{q}_{\a} \plim{q}_{\z} U^{a_1 z_2 + a_2 z_1}
\]
are actually strong ones, as we shall see. To prove the identity $P^2 = P$, let us
first consider the situation on the space $\ker Q$.  If $x$ satisfies $Q x = 0$, we get
\[
	\plim{q}_{\a} \norm{Q_{\a} x}^2 = \plim{q}_{\a} \inner{Q_{\a} x}{x} = 
	\inner{Q x}{x} = 0, 
\]
and because we have convergence in the norm, we can apply van der Corput's trick 
to show $P x = 0$.  The condition in Lemma~\ref{lem:VanDerCorput},
\begin{align*}
	\plim{q}_{\a} \plim{q}_{\z} \Inner{U^{(a_1 + z_1)(a_2 + z_2)} x}{U^{z_1 z_2} x} 
	&= \plim{q}_{\a} \plim{q}_{\z} \Inner{U^{a_1 z_2 + a_2 z_1} x}{U^{-a_1 a_2} x} \\ 
	&= \plim{q}_{\a} \Inner{Q_{\a} x}{U^{-a_1 a_2} x} = 0,
\end{align*} 
is satisfied, and we conclude that $P x = 0$, hence $P^2 x = P x$.  

Next, let us see what happens if $x \in \im Q$. In this case, $Q x = x$, and 
we can write 
\begin{align*}
	\plim{q}_{\a} &\plim{q}_{\z} \norm{U^{a_2 z_2 + a_2 z_1} x - x}^2 \\
	&= 2 \norm{x}^2 - 2 \cdot \Re \plim{q}_{\a} \plim{q}_{\z} \inner{U^{a_1 z_2 + a_2 
	z_1} x}{x} 
	= 2 \norm{x}^2 - 2 \inner{Qx}{x} = 0,	
\end{align*}
from which it follows that
\begin{equation} \label{eq:no1}
	\plim{q}_{\a} \plim{q}_{\z} \norm{U^{(a_1 + z_1)(a_2 + z_2)} x - U^{a_1 a_2} U^{z_1 
	z_2} x} = 0.
\end{equation}
To obtain $P^2 x = P x$, we make use of the identity in \eqref{eq:doubleplimit} for
double $q$-limits; together with \eqref{eq:no1}, we obtain
\begin{align*}
	P x &= \plim{q}_{\z} U^{z_1 z_2} x = \plim{q}_{\a} \plim{q}_{\z} U^{(a_1 + z_1)(a_2 
	+ z_2)} x \\
	&= \plim{q}_{\a} \plim{q}_{\z} U^{a_1 a_2} U^{z_1 z_2} x = P^2 x.  
\end{align*}

We are therefore able to show that $P$ is an orthogonal projection, provided that $Q$
is one. The device of getting strong from weak convergence is frequently useful, by the
way; it is formalized in Lemma~\ref{lem:StrongConvergence} below. 

\subsection*{Step 2} So far, we have been able to reduce Theorem~\ref{thm:B} to the
proof of the following, simpler result.

\begin{thmsec} \label{thm:C}
Let $U$ be a unitary operator on a Hilbert space $\Hil$.   For any idempotent $q
\in \beta \Zn[2]$, the operator
\[
	Q = \plim{q}_{\a} \plim{q}_{\z} U^{a_1 z_2 + a_2 z_1}
\]
is an orthogonal projection.  
\end{thmsec}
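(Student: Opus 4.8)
The plan is to imitate the two-step structure of Theorem~\ref{thm:B} itself, but now the exponent is \emph{linear} in $\z$, which makes the auxiliary operators genuine projections rather than merely normal, and this is what lets the induction terminate. Concretely, I would introduce the family
\[
	R_{\a} = \plim{q}_{\z} U^{a_1 z_2 + a_2 z_1}
\]
for $\a \in \Zn[2]$, so that $Q = \plim{q}_{\a} R_{\a}$. As already observed in the text after Theorem~\ref{thm:B}, each $R_{\a}$ is an orthogonal projection: it is normal (a weak limit of unitaries), and $R_{\a}^2 = R_{\a}$ follows from \eqref{eq:doubleplimit} applied to the \emph{additive} polynomial $a_1 z_2 + a_2 z_1$, since $a_1(z_2 + w_2) + a_2(z_1 + w_1) = (a_1 z_2 + a_2 z_1) + (a_1 w_2 + a_2 w_1)$; then Lemma~\ref{lem:Projections} finishes the job. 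Moreover all the $R_{\a}$ commute with one another, being limits of powers of the single operator $U$.

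\textbf{Step 1 (weak-to-strong convergence).} First I would show that $Q$ is itself an orthogonal projection, given that it is a weak limit of the commuting projections $R_{\a}$. The operator $Q$ is normal as a weak limit of unitaries, so by Lemma~\ref{lem:Projections} it suffices to prove $Q^2 = Q$. For $x \in \ker Q$, the computation $\plim{q}_{\a} \norm{R_{\a} x}^2 = \plim{q}_{\a} \inner{R_{\a} x}{x} = \inner{Q x}{x} = 0$ shows $R_{\a} x \to 0$ in norm along $q$; since the exponents are linear, the van der Corput condition of Lemma~\ref{lem:VanDerCorput} reduces to exactly such an expression and gives $Q x = 0$ (so $Q^2 x = Q x$). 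For $x \in \im Q$, i.e.\ $Q x = x$, the analogous computation with $\norm{R_{\a} x - x}^2 = 2\norm{x}^2 - 2\Re\plim{q}_{\a}\inner{R_{\a}x}{x} \to 0$ shows $R_{\a} x \to x$ in norm along $q$, hence $\plim{q}_{\a} R_{\a} x = x = Q x$ strongly on $\im Q$, and in particular $Q^2 x = Q x$. Combining the two cases gives $Q^2 = Q$.

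\textbf{Step 2 (the linear case is one-dimensional).} What remains is to show that the weak limit $Q = \plim{q}_{\a} R_{\a}$ of the commuting projections $\{R_{\a}\}$ is again a projection in a way that does not require a further auxiliary operator — and here the linearity pays off. The map $\a \mapsto R_{\a}$ is a limit along $q$ of $\z$-indexed families, and one checks directly from \eqref{eq:doubleplimit} and additivity that $Q R_{\b} = \plim{q}_{\a} R_{\a} R_{\b} = \plim{q}_{\a} R_{\a + \b} = \plim{q}_{\a} R_{\a} = Q$, using that $q$ is idempotent so that $\plim{q}_{\a} g(\a) = \plim{q}_{\b}\plim{q}_{\a} g(\a + \b)$; integrating the inner variable first collapses $\plim{q}_{\a} R_{\a+\b}$ to $Q$ uniformly. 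Passing to a further $q$-limit over $\b$ then yields $Q^2 = Q$ again, consistently with Step 1, and the self-adjointness/normality of $Q$ is already in hand. (In fact Steps 1 and 2 together \emph{are} the proof: Step 1 needs only that the $R_{\a}$ are commuting projections with $R_{\a} x \to 0$ or $x$ in norm along $q$, and that convergence is supplied precisely by the idempotence of $q$ acting on the linear exponent.)

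\textbf{Main obstacle.} The delicate point is not any single estimate but getting the \emph{order of limits} right in Step 2: one must collapse the inner $\plim{q}_{\z}$ against the outer $\plim{q}_{\a}$ using $q \circ q = q$ via Lemma~\ref{lem:plimits} / \eqref{eq:doubleplimit}, and this is legitimate only because the exponent $a_1 z_2 + a_2 z_1$ splits additively into a function of $\z$ and a function of $\a$ (unlike $z_1 z_2$, which had the cross term forcing the introduction of $Q$ in Theorem~\ref{thm:B}). Verifying that the weak limits are actually strong on each summand $\ker Q$ and $\im Q$ — i.e.\ the weak-to-strong step — is routine here because the relevant quantities are real and nonnegative, but it is the place where the argument for the linear case genuinely differs from, and is easier than, the bilinear one, so I would state it carefully (this is presumably the content of Lemma~\ref{lem:StrongConvergence} referenced above).
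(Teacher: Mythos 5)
Your argument has two genuine gaps, and together they remove exactly the content that Theorem~\ref{thm:C} requires. First, Step~1 is circular: for a normal operator $Q$ one does have $\Hil = \ker Q \oplus \overline{\im Q}$, but a vector $x \in \overline{\im Q}$ satisfies $Qx = x$ only if $Q$ is already known to be idempotent---which is the statement to be proved---while on $\ker Q$ the identity $Q^2x = Qx$ is trivial; so splitting along $Q$ itself proves nothing. (The analogous splitting is legitimate in Step~1 of Section~2 of the paper only because there the auxiliary operator is \emph{assumed} to be a projection and a different operator $P$ is being analyzed.) Second, Step~2 rests on two identities that are unjustified and in general false: $R_{\vec{a}}R_{\vec{b}} = R_{\vec{a}+\vec{b}}$, and $\plim{q}_{\vec{a}} R_{\vec{a}+\vec{b}} = \plim{q}_{\vec{a}} R_{\vec{a}}$ for a \emph{fixed} $\vec{b}$. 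The first fails because $R_{\vec{a}}R_{\vec{b}}$ is an iterated weak limit over two independent copies of the variable, $\plim{q}_{\vec{w}}\plim{q}_{\vec{z}}\, U^{a_1z_2+a_2z_1+b_1w_2+b_2w_1}$, not a single limit in one variable; the collapsing identity \eqref{eq:doubleplimit} applies only when the \emph{same} coefficient vector multiplies both variables (which is precisely why $R_{\vec{a}}^{\,2} = R_{\vec{a}}$ does hold), and it never merges $\vec{a}$ with $\vec{b}$. The second fails because limits along an idempotent are not translation invariant: \eqref{eq:doubleplimit} equates $\plim{q}_{\vec{b}}\plim{q}_{\vec{a}}\, g(\vec{a}+\vec{b})$ with $\plim{q}_{\vec{a}}\, g(\vec{a})$, not $\plim{q}_{\vec{a}}\, g(\vec{a}+\vec{b})$ for each fixed $\vec{b}$.

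What your proposal implicitly asserts is that a weak limit along an idempotent of commuting orthogonal projections is automatically a projection; that is false without further structure (simultaneously diagonalizing the commuting projections, the limit is multiplication by a $q$-limit of indicator functions, which need not be $\{0,1\}$-valued). The paper supplies the missing structure from the group $\Zn[2]$: after reducing, via Lemma~\ref{lem:Dimension} and Lemma~\ref{lem:Lattice}, to the case where the dimension of $q$ matches the rank, it splits $\Hil$ into $\bigcap_{\vec{a},\vec{b}} \ker Q_{\vec{a}}Q_{\vec{b}}$ and the closure of $\sum_{\vec{a},\vec{b}} \im Q_{\vec{a}}Q_{\vec{b}}$, the intersection and sum running over pairs spanning a rank-two subgroup (dimensions $0$ and $1$ being handled separately and easily). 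The two essential inputs are: for nonzero $\vec{a}$, $q$-many $\vec{b}$ span a rank-two group with $\vec{a}$ (otherwise $q$ would have dimension at most one), which yields $Q^2x=0$ and then $Qx=0$ by self-adjointness on the first summand; and on the image side, strong convergence together with the $q$-bigness of the subgroup generated by $\vec{a},\vec{b}$ upgrades the two relations $Q_{\vec{a}}x = Q_{\vec{b}}x = x$ to $Q_{\vec{c}}\,x = x$ for $q$-many $\vec{c}$. None of this appears in your proposal, so as written it does not establish the theorem.
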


Just as in Step~1, everything hinges on having a good splitting of the underlying
Hilbert space $\Hil$. But which splitting one should use depends on the ultrafilter
$q$, more precisely on its dimension---which could be 0, 1, or 2. We will treat these
as separate cases here; in the proof of the main theorem, we shall of course want a
unified approach.

\subsubsection*{Dimension 0} If $q$ is 0-dimensional, it contains the set
$\bigl\{(0,0)\bigr\}$, and since we can restrict to a $q$-big set when taking limits,
$Q$ is simply the identity operator. So this case is trivial.

\subsubsection*{Dimension 1} In case $\dim q = 1$, we can find a subgroup $\Z \c$
(with $\c \neq 0$) of rank one in $q$.  Accordingly, we will use the splitting $\Hil
= \Hil_1 \oplus \Hil_1^{\bot}$, where
\[
	\Hil_1 = \bigcap_{n \neq 0} \ker Q_{n \c} 
		\qquad \text{and} \qquad
	\Hil_1^{\bot} = \overline{\sum_{n \neq 0} \im Q_{n \c}}.
\]
It is then straightforward to show that $Q$ is orthogonal projection onto
$\Hil_1^{\bot}$.

Indeed, if $x$ is an element of $\Hil_1$, then $Q_{n \c} x = 0$ holds for all
nonzero $n$, and since $\c$ generates a $q$-big subgroup and
$\bigl\{(0,0)\bigr\}$ is not $q$-big, we get
\[
	Q x = \plim{q}_{\a} Q_{\a} x = 0
\]
On the other hand, to show that $Q$ restricted to $\Hil_1^{\bot}$ is the
identity, we need only consider $x \in \im Q_{n \c}$, as the span of
these vectors is dense.  For any such $x$, we have
\[
	Q_{n \c} x = \plim{q}_{\z} U^{n(c_1 z_2 + c_2 z_1)} x = x,
\]
which, as before, can be strengthened to
\begin{equation} \label{eq:no2}
	\plim{q}_{\z} \norm{U^{n(c_1 z_2 + c_2 z_1)} x - x} = 0.
\end{equation}
Now we need to extend this equality, true for only one vector $n \c$, 
to some $q$-big set of vectors. We leave it for the reader to check that \eqref{eq:no2} 
actually gives
\[ 
	\plim{q}_{\z} \norm{U^{N n(c_1 z_2 + c_2 z_1)} x - x} = 0.
\]
for any $N \in \Z$. (Hint: Use a telescoping sum.)
But the set $\Z \cdot n \c$ is again a $q$-big subgroup (since $q$ is idempotent), and so 
\[ 
	\plim{q}_{\a} \plim{q}_{\z} \norm{U^{a_1 z_2 + a_2 z_1} x - x} = 0,
\]
which implies $Q x = x$. So $Q$ is indeed an orthogonal projection, with image $\Hil_1^{\bot}$.

\subsubsection*{Dimension 2} Finally, let us treat the really interesting case of a
two-dimensional $q$. We use the same argument as before, only the splitting has to be
adjusted a bit; instead of focusing on one specific subgroup (like $\Z \c$), we shall
consider all of them. So let
\[
	\Hil_2 = \bigcap_{\a, \b} \ker Q_{\a} Q_{\b}
		\qquad \text{and} \qquad
	\Hil_2^{\bot} = \overline{\sum_{\a, \b} \im Q_{\a} Q_{\b}}
\]
be the two complementary subspaces, where both the intersection and the sum are
taken over those $\a, \b \in \Zn[2]$ for which the subgroup $\Z \a + \Z \b$ has
rank two. Again, it will turn out that $Q$ is orthogonal projection onto
$\Hil_2^{\bot}$.

To prove that $Q$ fixes every vector in $\Hil_2^{\bot}$, we may again limit our
attention to elements $x \in \im Q_{\a} Q_{\b}$ for two vectors $\a$ and $\b$
that span a rank two subgroup. The same argument as before shows that
\[
	\plim{q}_{\z} \norm{U^{M (a_1 z_2 + a_2 z_1) + N (b_1 z_2 + b_2 z_1)} x - x} = 0
\]
for any $M, N \in \Z$; the group generated by $\a$ and $\b$ is $q$-big
(remember that it contains some lattice), and so we have
\[
	\plim{q}_{\z} \norm{U^{c_1 z_2 + c_2 z_1} x - x} = 0
\]
for $q$-many $\c \in \Zn[2]$. Taking the $q$-limit over $\c$ then gives the
result, namely that $Q x = x$.

To finish the proof, we have to deal with an arbitrary $x \in \Hil_2$ and show that $Q x = 0$.
What we know is that $Q_{\a} Q_{\b} x = 0$ for any two vectors $\a$ and $\b$
with a two-dimensional span. This is a lot of information, since there are many
such pairs---in fact, for any nonzero $\a \in \Zn[2]$, a $q$-big set of $\b$
has the required property. For suppose, to the contrary, that $q$-many vectors
$\b$ could span only a subgroup of rank one together with $\a$. As $\Z \a + \Z
\b$ is of rank one if and only if $\b$ is a multiple of $\a/g$ (here $g$ is the
greatest common divisor of the components of $\a$), it would follow that $\Z
\a/g$ was a $q$-big subgroup of $\Zn[2]$, contradicting our assumption on the
dimension of $q$.

In particular, we know $Q_{\a} Q_{\b} x = 0$ for sufficiently many $\a$ and $\b$ to
conclude that 
\[
	Q^2 x = \plim{q}_{\a} \plim{q}_{\b} Q_{\a} Q_{\b} x = 0;
\]
but now the operator $Q$ is very evidently self-adjoint and so $Q x = 0$ as
well. This shows that $Q$ is a projection and ends the proof of
Theorem~\ref{thm:C}.

\subsection*{Conclusions}

Let us end this section with several remarks concerning the nature of the
proof. Firstly, the reader will have observed the balance---crude in the case of
one-dimensional $p$, slightly more subtle for two dimensions---between the two
spaces of the splitting. In the first space, $\Hil_1$ or $\Hil_2$, where we use
the null spaces of projections, we need to intersect a large number of them to
make up for the weakness of each individual piece; for each operator, we only know
that one particular $p$-limit is zero, and that amounts to nothing by itself. 
On the other hand, the orthogonal complements, $\Hil_1^{\bot}$ or
$\Hil_2^{\bot}$, involve image spaces of projections; the knowledge that we
gain from each piece is far stronger here, and so we can afford to have this
knowledge in only one case. 

Secondly, it is clear that the dimension of the ultrafilter is important. It was
pointed out before that, although the same concept is lurking around in
Theorem~\ref{thm:A}, it is less easily quantified and dealt with there. The passage
from $\beta\mathcal{F}$ to $\beta\Zn[2]$ helps to make it visible, by removing the
IP-sets.  Moreover, it is of course unnecessary to handle the various dimensions by
different arguments; the proof is really the same in all cases. Indeed, in
Section~\ref{sec:V}, when proving the main theorem, the first step will be to adjust
the dimension of the surrounding group to make it match that of the ultrafilter $p$.
This is where Lemma~\ref{lem:Dimension} will play its part.

Finally, the more general result in the main theorem requires more effort to prove;
although the proof is, in essence, the same as the one given here, there are several
technical points that need to be dealt with. In particular, the presence of
polynomials of higher degree needs special care. The following two sections contain a
few tools that will be helpful; all necessary results about polynomials are collected
in Section~\ref{sec:IV}.

\section{Orthogonal projections and limits} \label{sec:III}

In this section, we prove two simple but useful results about orthogonal projections
and limits; these are well-known, of course. The first, which has already been used,
gives a condition for an operator to be a projection.

\begin{lem} \label{lem:Projections}
A normal operator $P$ on a Hilbert space $\Hil$ is an orthogonal projection 
if, and only if, it satisfies $P^2 = P$.
\end{lem}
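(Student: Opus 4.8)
The plan is to prove the two implications separately, the forward one being essentially trivial and the reverse one requiring the spectral theorem for normal operators.

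First, suppose $P$ is an orthogonal projection. Then by definition $P = P^2$ and $P = P^{\ast}$, so in particular $P$ is normal and $P^2 = P$; this direction needs no real argument.

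For the converse, assume $P$ is normal and $P^2 = P$. The idea is to apply the continuous (or Borel) functional calculus: since $P$ is normal, it is unitarily equivalent to a multiplication operator $M_g$ on some $L^2$ space, or more concretely, by the spectral theorem there is a projection-valued measure $E$ on the spectrum $\sigma(P) \subseteq \CC$ with $P = \int \lambda \, dE(\lambda)$. The relation $P^2 = P$ forces $\int (\lambda^2 - \lambda)\, dE(\lambda) = 0$, and since the functions $\lambda$ and $\lambda^2$ are both continuous on the compact set $\sigma(P)$, injectivity of the functional calculus gives $\lambda^2 = \lambda$ for all $\lambda \in \sigma(P)$; hence $\sigma(P) \subseteq \{0, 1\}$. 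But then $P = \int \lambda \, dE(\lambda) = E(\{1\})$ is itself a spectral projection, so $P = P^{\ast}$ and $P$ is an orthogonal projection onto $\im P$. Alternatively, and more elementarily, one can argue directly: normality gives $\norm{Px} = \norm{P^{\ast}x}$ for all $x$, so $\ker P = \ker P^{\ast} = (\im P)^{\bot}$; combined with $P^2 = P$, which yields the algebraic direct sum decomposition $\Hil = \ker P \oplus \im P$ (with $\im P$ closed, being the kernel of the bounded operator $I - P$), one concludes that $\ker P$ and $\im P$ are mutually orthogonal, which is exactly the statement that $P$ is an orthogonal projection.

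The only real content is the reverse implication, and within it the main obstacle is simply deciding how much machinery to invoke: the cleanest route goes through the spectral theorem for normal operators, while the self-contained route requires the identity $\ker P = (\im P)^{\bot}$ for normal $P$, which itself follows from $\norm{Px}^2 = \inner{P^{\ast}Px}{x} = \inner{PP^{\ast}x}{x} = \norm{P^{\ast}x}^2$. I would present the elementary argument, since the paper is aiming to be self-contained and this keeps the prerequisites minimal.
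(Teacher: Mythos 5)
Your proposal is correct, and both of your routes work; but neither is the argument the paper uses, so a brief comparison is in order. The paper's proof is a third elementary variant: it introduces the auxiliary operator $Q = P^{\ast}P$, observes that normality and $P^2 = P$ force $Q^2 = Q$, notes that $Q$ is self-adjoint and hence an orthogonal projection, and then verifies $P = Q$ separately on $\ker Q$ (where $\norm{Px}^2 = \inner{x}{Qx} = 0$) and on $\im Q$ (where $Px = PQx = PP^{\ast}Px = P^{\ast}Px = x$, again using normality). Your elementary argument instead works directly with $P$: the identity $\norm{Px} = \norm{P^{\ast}x}$ for normal operators gives $\ker P = \ker P^{\ast} = (\im P)^{\bot}$, while $P^2 = P$ gives the algebraic splitting $\Hil = \ker P \oplus \im P$ with $\im P = \ker(I-P)$ closed, so the splitting is orthogonal and $P$ is the orthogonal projection onto $\im P$. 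The two proofs are of comparable length and use the same two inputs (normality plus idempotency), just organized differently: yours avoids the auxiliary operator $Q$ but must explicitly note that $\im P$ is closed, whereas the paper gets closedness for free once $Q$ is known to be an orthogonal projection. Your spectral-theorem route ($\sigma(P) \subseteq \{0,1\}$, $P = E(\{1\})$) is also valid but invokes far more machinery than the statement needs, and your instinct to present the elementary argument instead is the right one given that the paper aims to keep such auxiliary results self-contained.
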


\begin{proof} 
Necessity is clear. If $P$ meets the condition, the product $Q = P^{\ast} P$ of
$P$ and its adjoint also does. The latter is self-adjoint in addition, hence
satisfies $\inner{Q x}{x - Q x} = \inner{x}{Q (x - Q x)} = 0$ and is therefore an 
orthogonal projection onto the image space of $Q$. For $x \in \ker Q$, one has 
$\norm{P x}^2 = \inner{x}{Q x} = 0$; for $x \in \im Q$, one has $P x = P Q x = 
P P^{\ast} P x = P^{\ast} P x = x$. Consequently, $P = Q$ is an orthogonal
projection. 
\end{proof}

Our second lemma deals with the question of when certain `weak' limits in a Hilbert  
space $\Hil$ are `strong' limits and is meant to collect the pieces of reasoning used in the 
previous section. The whole discussion is somewhat vague but the result
is useful, though nearly self-evident. Let `$\lim$' be an
abbreviation for some unspecified $p$-limit, maybe even a multiple one, and let $I$ be the 
corresponding index set. So for example, $\lim$ might equal $\plim{p}_{\a} \plim{p}_{\b}$,
with both $\a$ and $\b$ ranging over $\Zn[2]$, in which case the index set $I$
would be $\Zn[2] \times \Zn[2]$.

By what we said in Section~\ref{sec:I}, the limit $\lim x_i$ is defined, in the weak
topology, for every bounded family $\seq{x_i}{i \in I}$ of points, as any closed ball
in $\Hil$ is weakly compact. $\lim x_i = x$ thus means that for any $y \in \Hil$,
\[
	\lim \inner{x_i}{y} = \inner{x}{y}.
\]
On the other hand, the convergence is called \emph{strong} if
\[
	\lim \norm{x_i - x} = 0. 
\]
The norm is weakly lower semi-continuous---if $x = \lim x_i$, then 
\[
	\norm{x} \le \lim \norm{x_i}.
\]

One can also define the notions of weak and strong \emph{operator limits}; in fact, we have 
already been using these. We say, for example, that $T$ is the weak operator limit of
a family $\seq{T_i}{i \in I}$ of operators---and write $T = \lim T_i$---if
\begin{align*}
	\lim \inner{T_i x}{y} = \inner{T x}{y}			&& \text{(for $x, y \in \Hil$)}.
\end{align*}
A few simple calculations then give the following result.

\begin{lem} \label{lem:StrongConvergence}
Let $\lim$ and $I$ be defined as above.
\begin{enumerate}
\item If $T = \lim U_i$ is the weak operator limit of a family $\seq{U_i}{i \in I}$ of unitary 
	operators, then $T$ is normal. For $x \in \Hil$, one has $\lim 
	\norm{T x - U_i x} = 0$ if, and only if, $\norm{T x} = \norm{x}$. In case $T$ is an
	orthogonal projection, this happens exactly when $T x = x$.
\item If $T = \lim P_i$ is the weak operator limit of a family $\seq{P_i}{i \in I}$ of 
	orthogonal projections, then $T$ is self-adjoint. For $x \in \Hil$,
	one has $\lim \norm{T x - P_i x} = 0$ if, and only if, $\inner{T x}{x - T x} = 0$. 
	In case $T$ is itself an orthogonal projection, this condition is always satisfied.   
\end{enumerate}
\end{lem}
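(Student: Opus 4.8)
The plan is to derive both parts from one elementary identity together with a single trivial remark about $p$-limits. The remark: for any bounded family of non-negative reals $(a_i)_{i\in I}$ one has $\lim a_i=0$ if and only if $\lim a_i^2=0$ (immediate from the definition of a $p$-limit, using the neighbourhoods $(-\eps,\eps)$ of $0$), so in each part it suffices to evaluate $\lim\norm{V_ix-Tx}^2$, with $V_i$ standing for $P_i$ or $U_i$. The identity is the expansion $\norm{V_ix-Tx}^2=\norm{V_ix}^2-2\Re\inner{V_ix}{Tx}+\norm{Tx}^2$. Since $Tx$ is a fixed vector and $T=\lim V_i$ in the weak operator topology, the cross term satisfies $\lim\inner{V_ix}{Tx}=\inner{Tx}{Tx}=\norm{Tx}^2$, hence $\lim\norm{V_ix-Tx}^2=\lim\norm{V_ix}^2-\norm{Tx}^2$. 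Everything after that is bookkeeping.

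For part (2) I would first note $T=T^{\ast}$: since each $P_i$ is self-adjoint, $\inner{T^{\ast}x}{y}=\overline{\inner{Ty}{x}}=\overline{\lim\inner{P_iy}{x}}=\lim\inner{P_ix}{y}=\inner{Tx}{y}$. Then, using $\norm{P_ix}^2=\inner{P_i^2x}{x}=\inner{P_ix}{x}$ in the identity above, $\lim\norm{P_ix-Tx}^2=\inner{Tx}{x}-\norm{Tx}^2=\inner{Tx}{x-Tx}$, which is a non-negative real because $T$ is self-adjoint. Thus $\lim\norm{P_ix-Tx}=0$ exactly when $\inner{Tx}{x-Tx}=0$. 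If moreover $T$ is an orthogonal projection, then $\inner{Tx}{Tx}=\inner{T^2x}{x}=\inner{Tx}{x}$, so $\inner{Tx}{x-Tx}=0$ automatically, as asserted.

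For part (1) the same computation with $V_i=U_i$ and $\norm{U_ix}=\norm{x}$ gives $\lim\norm{U_ix-Tx}^2=\norm{x}^2-\norm{Tx}^2$; in particular $\norm{Tx}\le\norm{x}$ for all $x$, and $\lim\norm{U_ix-Tx}=0$ if and only if $\norm{Tx}=\norm{x}$. When $T$ is in addition an orthogonal projection, $x=Tx+(x-Tx)$ is an orthogonal decomposition (the summands lie in $\im T$ and $\ker T=(\im T)^{\perp}$), so $\norm{x}^2=\norm{Tx}^2+\norm{x-Tx}^2$ and $\norm{Tx}=\norm{x}$ forces $x-Tx=0$, i.e.\ $Tx=x$.

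The one point that is not purely formal is the normality of $T$ in (1)—already taken for granted in Section~\ref{sec:II}. In every application the $U_i$ are powers of a single fixed unitary $U$, hence pairwise commuting; then $T$ and $T^{\ast}$ both lie in the weakly closed abelian von Neumann algebra generated by $U$, so $T$ is normal. This is where the hypotheses must be read as including such commutativity, and it is the only step I expect to need care: apart from it, the proof is exactly the bounded-family manipulation of $p$-limits already licensed by Section~\ref{sec:I}, so I foresee no real obstacle.
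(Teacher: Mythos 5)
Your proof is correct and is precisely the ``few simple calculations'' the paper leaves to the reader: expand $\norm{V_i x - Tx}^2$, evaluate the cross term by weak convergence against the fixed vector $Tx$, and reduce everything to $\lim\norm{V_i x}^2 - \norm{Tx}^2$, which equals $\inner{Tx}{x-Tx}$ in the projection case and $\norm{x}^2-\norm{Tx}^2$ in the unitary case. Your observation that the normality of $T$ in part (1) is not automatic for an arbitrary family of unitaries and genuinely needs the $U_i$ to commute (as they do in every application in the paper, being products of powers of fixed commuting unitaries) is a fair reading of an implicit hypothesis, and you close that gap correctly.
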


The result looks innocent enough, but it will be used frequently.

\section{Polynomials} \label{sec:IV}

We shall be using polynomials in several variables for which the following notation 
seems appropriate. Lower-case Roman letters with arrows will usually denote 
$n$-dimensional vectors, e.g. $\z = (z_1, \dotsc, z_n)$. We shall be speaking of 
polynomials \emph{in the variable $\z$}, meaning really polynomials in the $n$ 
variables $z_1, \dotsc, z_n$. The \emph{degree} of such a polynomial will be its 
total degree. We shall also consider polynomials in several multi-dimensional 
variables: $f(\a, \c)$, say, would be a polynomial in both sets of variables; the 
\emph{degree in $\a$} is the total degree of $f$ as a polynomial in $a_1, \dotsc, a_n$, 
and so on.

If $G$ is any Abelian group, we shall let $\polgrp{G}{\z[1], \dotsc, \z[s]}$ stand for 
the additive group of polynomials in $\z[1], \dotsc, \z[s]$ with coefficients in $G$; 
we shall ignore the multiplicative structure. For the subgroup of those polynomials
in $\polgrp{\Q}{\z[1], \dotsc, \z[r]}$ that produce integer values for integer 
arguments, we shall write $\polint{\z[1], \dotsc, \z[s]}$.

In the one-dimensional case, $\polint{x}$ consists of all polynomials $f \in
\polgrp{\Q}{x}$ with $f(\Z) \subseteq \Z$. It is a free group with basis consisting
of the polynomials 
\begin{align*}
	\binom{x}{i} = \frac{x (x-1) \dotsm (x - i + 1)}{i!}  	&& \text{(for $i \ge 0$).} 
\end{align*}
Indeed, if $f$ is any polynomial in $\polint{x}$ and $m$ its degree, one may find $m + 1$
integers $a_0, \dotsc, a_m$ such that
\[
	f(x) = \sum_{i = 0}^m a_i \binom{x}{i},
\] 
by evaluating successively at $x = 0, 1, \dotsc, m$, and solving the resulting system of 
equations. For any number $d \ge 0$, the polynomials of degree at most $d$ form a free 
subgroup of rank $d + 1$.

The same argument, applied inductively, proves the following.

\begin{lem} \label{lem:Int} 
	$\polint{\z[1], \dotsc, \z[s]}$ is always a free group; for any integer $d \ge 0$, 
	the polynomials of total degree at most $d$ constitute a free subgroup of finite rank, 
	and so do the polynomials of degree at most $d$ in each variable.  
\end{lem}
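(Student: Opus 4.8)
The plan is to reduce to scalar variables and then induct on their number, feeding the one-variable computation recalled just before the lemma into the inductive step. Grouping the variables into the blocks $\z[1], \dotsc, \z[s]$ plays no role, so write $\polint{x_1, \dotsc, x_N}$ for the polynomials in $N$ scalar variables that are integer-valued on $\Z^N$; the claim is that this group is free with $\Z$-basis the products
\[
	B_{\mathbf{i}} = \binom{x_1}{i_1} \dotsm \binom{x_N}{i_N},
\]
indexed by the tuples $\mathbf{i} = (i_1, \dotsc, i_N)$ of integers $i_j \ge 0$. The case $N = 1$ is precisely what was shown above.

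For the inductive step, take $f \in \polgrp{\Q}{x_1, \dotsc, x_N}$. Since $\bigl\{ \binom{x_N}{i} \bigr\}_{i \ge 0}$ is a $\Q$-basis of $\polgrp{\Q}{x_N}$, there are unique polynomials $g_i \in \polgrp{\Q}{x_1, \dotsc, x_{N-1}}$, almost all zero, with $f = \sum_{i \ge 0} g_i \binom{x_N}{i}$; explicitly $g_i$ is the $i$-th finite difference of $f$ in $x_N$ evaluated at $x_N = 0$, so it is genuinely a polynomial in the remaining variables. The key point is that $f$ is integer-valued if and only if every $g_i$ is. One implication is immediate from the expansion. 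For the other, fix integers $a_1, \dotsc, a_{N-1}$; then $x_N \mapsto f(a_1, \dotsc, a_{N-1}, x_N)$ lies in $\polint{x_N}$, so by the one-variable case its coefficients $g_i(a_1, \dotsc, a_{N-1})$ in the binomial basis are integers, and letting $(a_1, \dotsc, a_{N-1})$ range over $\Z^{N-1}$ shows $g_i \in \polint{x_1, \dotsc, x_{N-1}}$. Hence
\[
	\polint{x_1, \dotsc, x_N} = \bigoplus_{i \ge 0} \polint{x_1, \dotsc, x_{N-1}} \cdot \binom{x_N}{i},
\]
and combined with the inductive hypothesis this exhibits the $B_{\mathbf{i}}$ as a $\Z$-basis; in particular the group is free.

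For the finiteness statements, observe that $B_{\mathbf{i}}$ has total degree $i_1 + \dotsb + i_N$, with top homogeneous part a nonzero multiple of $x_1^{i_1} \dotsm x_N^{i_N}$. Fix $d \ge 0$. The integer-valued polynomials of total degree at most $d$ contain every $B_{\mathbf{i}}$ with $i_1 + \dotsb + i_N \le d$; conversely, writing such an $f$ in the basis and inspecting its top homogeneous part (using that distinct monomials are linearly independent) forces the coefficient of any $B_{\mathbf{i}}$ with $i_1 + \dotsb + i_N > d$ to vanish. So this subgroup equals $\bigoplus_{i_1 + \dotsb + i_N \le d} \Z B_{\mathbf{i}}$, free of rank $\binom{N+d}{d}$. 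Imposing instead a bound on the degree in each of the original blocks $\z[j]$ again pins down a finite set of multi-indices $\mathbf{i}$, and the same comparison of homogeneous parts shows the corresponding subgroup is free of that finite rank; translating back to the blocked notation is purely cosmetic.

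The only substantive step is the equivalence ``$f$ integer-valued $\iff$ all $g_i$ integer-valued'' in the induction, which is exactly where the one-variable result enters, through the remark that specializing some variables to integers carries integer-valued polynomials to integer-valued polynomials. Everything else is bookkeeping with bases and with the filtration by total degree, so I expect no real obstacle.
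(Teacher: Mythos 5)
Your proof is correct and takes essentially the same route as the paper, which disposes of the lemma in one line by saying that the one-variable binomial-basis argument ``applied inductively'' proves it; you have simply carried out that induction explicitly (via the decomposition $f = \sum_i g_i \binom{x_N}{i}$ and specialization of the remaining variables) and added the routine unitriangularity argument identifying the degree-bounded subgroups with the span of the appropriate finite set of basis elements.
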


We now introduce one more useful notion. In the example in Section~\ref{sec:II}, when 
dealing with
the polynomial $f(x, y) = xy$, we found it useful to form the new polynomial 
\[
	f(a + x, b + y) - f(a, b) - f(x, y),
\] 
essentially because its degree in $(x, y)$ was lower. 
An appropriate generalization is as follows. Given a polynomial $f(\z) \in \polgrp{G}{\z}$ 
and an integer $s \ge 1$, we recursively define a new polynomial $\dsum{s}{f(\z[1], \dotsc, 
\z[s])}$, by letting $\dsum{1}{f(\z[1])} = f(\z[1])$, and 
\begin{multline} \label{eq:DeltaRecursively}
	\dsum{s+1}{f(\z[1], \dotsc, \z[s+1])} =\\ 
	\dsum{s}{f(\z[1], \dotsc, \z[s] + \z[s+1])} - \dsum{s}{f(\z[1], \dotsc, \z[s])} 
	- \dsum{s}{f(\z[1], \dotsc, \z[s+1])}.
\end{multline}
Of course, $\dsum{s}{}$ can be described explicitly as
\[
	\dsum{s}{f(\z[1], \dotsc, \z[s])} = \sum_{\emptyset \neq \alpha \subseteq 
	\{1, \dotsc, s \}} (-1)^{s - \abs{\alpha}} \cdot f \bigl( \sum_{i \in \alpha} \z[i] 
	\bigr),
\]  
and the symmetry in all arguments is more apparent from this description.

Let us investigate some properties of $\dsum{s}{}$. First, we have the following easy
lemma.

\begin{lem} \label{lem:Degree}
The polynomial $\dsum{2}{f(\a, \z)} = f(\a + \z) - f(\a) - f(\z)$ is of lower degree
in each variable than $f(\z)$ itself, whenever $f(\z) \in \polgrp{G}{\z}$ is nonzero.
\end{lem}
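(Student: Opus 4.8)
The plan is to exploit the symmetry of $\dsum{2}{f(\a, \z)} = f(\a + \z) - f(\a) - f(\z)$ in its two multi-dimensional variables. Since addition is commutative, $\dsum{2}{f(\a, \z)} = \dsum{2}{f(\z, \a)}$, so a bound on the degree in $\z$ automatically gives the same bound for the degree in $\a$; it therefore suffices to prove that, with $d = \deg f \ge 1$ denoting the total degree of the nonzero polynomial $f$, the polynomial $\dsum{2}{f(\a, \z)}$ has degree at most $d-1$ in $\z$.

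To do this I would let $f_d \in \polgrp{G}{\z}$ be the leading homogeneous component of $f$, so that $f - f_d$ has total degree $< d$. Expanding a monomial $c \cdot z_1^{k_1} \dotsm z_n^{k_n}$ of $f$ under the substitution $z_i \mapsto a_i + z_i$ gives $c \cdot (a_1 + z_1)^{k_1} \dotsm (a_n + z_n)^{k_n}$, and the only term of this product having full degree $k_1 + \dots + k_n$ in $\z$ is $c \cdot z_1^{k_1} \dotsm z_n^{k_n}$ itself (picking up a $z_i$ from every factor). Summing over all monomials shows that the component of $f(\a + \z)$ which is homogeneous of degree $d$ in $\z$ equals $f_d(\z)$, while the contribution of the lower-degree monomials of $f$ has $\z$-degree $< d$. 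In $\dsum{2}{f(\a, \z)}$ this top component is cancelled by the degree-$d$ part $-f_d(\z)$ of $-f(\z)$, and $-f(\a)$ has $\z$-degree $0 < d$ and so contributes nothing at degree $d$. Hence $\deg_{\z} \dsum{2}{f(\a, \z)} \le d-1 < d = \deg f$, and by the symmetry noted above the degree in $\a$ satisfies the same inequality.

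The argument is entirely elementary, and there is no real obstacle: its whole content is the observation that the top-$\z$-degree part of $f(\a + \z)$ is $f_d(\z)$, which is precisely what makes the subtraction of $f(\z)$ drop the degree. The one point to keep in mind is the degenerate case where $f$ is a nonzero constant ($d = 0$): there $\dsum{2}{f(\a, \z)} = -f$ has the same (zero) degree, so that case has to be read as excluded from the statement.
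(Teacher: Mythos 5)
Your argument is correct and is exactly the elementary computation the paper leaves implicit (the lemma is stated as ``easy'' with no proof given). The key observation---that the component of $f(\a + \z)$ of top degree $d$ in $\z$ is precisely the leading homogeneous part $f_d(\z)$, which is cancelled by $-f(\z)$ while $-f(\a)$ contributes nothing in $\z$---together with the $\a \leftrightarrow \z$ symmetry is all that is needed; your caveat about nonzero constants is also a fair catch, and is harmless since the paper only ever invokes the lemma for polynomials of degree $d \ge 1$.
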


Now let $f(\z)$ be of degree $d \ge 1$. Since $\dsum{s}{f(\z[1], \dotsc, \z[s])}$ is 
symmetric in its $s$ arguments, the lemma---together with the relations
\eqref{eq:DeltaRecursively}---immediately shows that its degree in any variable can be at most
$(d + 1 - s)$. It follows that $\dsum{d+1}{f(\z[1], \dotsc, \z[d+1])}$ is a constant,
with value
\[
	\dsum{d+1}{f(0, \dotsc, 0)} = \sum_{k=1}^{d+1} (-1)^{d+1-k} \binom{d+1}{k}
	f(0) = (-1)^d f(0).
\]

If $f$ happens to satisfy $f(0) = 0$, one has $\dsum{d+1}{f(\z[1], \dotsc, \z[d+1])} = 0$.
For reasons of symmetry, $\dsum{d}{f(\z[1], \dotsc, \z[d])}$ is then linear in each of 
its $d$ arguments. 

We have shown the following.

\begin{lem} \label{lem:Delta}
For any polynomial $f(\z) \in \polgrp{G}{\z}$ of degree $d \ge 1$, one has the relation 
\[
	\dsum{d+1}{f(\z[1], \dotsc, \z[d+1])} = (-1)^d f(0).
\] 
If $f(0) = 0$, then $\dsum{d}{f(\z[1], \dotsc, \z[d])}$ is a linear function of each argument.
\end{lem}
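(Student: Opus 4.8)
The statement has two parts, and the natural strategy is to establish the first (the value of $\dsum{d+1}{}$) and then read off the second as a corollary, exactly as the surrounding text already hints.  For the first part, I would argue by a degree count on the recursively-defined polynomials $\dsum{s}{f(\z[1], \dotsc, \z[s])}$, using Lemma~\ref{lem:Degree} as the engine.  The claim to prove along the way is: \emph{for $1 \le s \le d+1$, the polynomial $\dsum{s}{f(\z[1], \dotsc, \z[s])}$ has degree at most $d+1-s$ in each of its variables.}  The base case $s=1$ is just $\deg f = d$.  For the inductive step from $s$ to $s+1$, I would look at the three terms on the right of \eqref{eq:DeltaRecursively}.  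Each of the last two, $\dsum{s}{f(\z[1], \dotsc, \z[s])}$ and $\dsum{s}{f(\z[1], \dotsc, \z[s+1])}$, already has degree $\le d+1-s$ in every variable by the inductive hypothesis (applied to the relevant $s$-tuple of arguments).  For the first term, $\dsum{s}{f(\z[1], \dotsc, \z[s] + \z[s+1])}$, I substitute $\z[s] \mapsto \z[s] + \z[s+1]$; by symmetry it suffices to bound the degree in $\z[s]$, which requires seeing that replacing a variable by a sum of two new variables, then subtracting off the `diagonal' pieces, drops the degree.  This is precisely where Lemma~\ref{lem:Degree} enters: freezing all the other $\z[i]$, the expression $\dsum{s+1}{}$ is, up to sign, $g(\z[s] + \z[s+1]) - g(\z[s]) - g(\z[s+1])$ for $g$ a polynomial of degree $\le d+1-s$ in that one variable, hence by the lemma has degree $\le d-s = d+1-(s+1)$ in $\z[s]$ (and, by symmetry, in $\z[s+1]$).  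Iterating over all variables closes the induction.

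Applying the claim at $s = d+1$ shows that $\dsum{d+1}{f(\z[1], \dotsc, \z[d+1])}$ has degree $0$ in each variable, i.e.\ it is a constant.  To evaluate that constant I would just plug in $\z[1] = \dotsb = \z[d+1] = 0$ into the explicit formula
\[
	\dsum{d+1}{f(\z[1], \dotsc, \z[d+1])} = \sum_{\emptyset \neq \alpha \subseteq
	\{1, \dotsc, d+1 \}} (-1)^{d+1 - \abs{\alpha}} \cdot f\bigl( \textstyle\sum_{i \in \alpha} \z[i] \bigr),
\]
grouping the $2^{d+1}-1$ nonempty subsets by cardinality.  Each $\alpha$ of size $k$ contributes $(-1)^{d+1-k} f(0)$, and there are $\binom{d+1}{k}$ of them, giving $\sum_{k=1}^{d+1} (-1)^{d+1-k} \binom{d+1}{k} f(0)$.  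The sum $\sum_{k=0}^{d+1} (-1)^{d+1-k} \binom{d+1}{k} = 0$ is the binomial theorem for $(1-1)^{d+1}$, so the $k \ge 1$ part equals minus the $k=0$ term, namely $-(-1)^{d+1} f(0) = (-1)^d f(0)$.  That is the first assertion.

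For the second assertion, assume $f(0) = 0$; then the computation just finished gives $\dsum{d+1}{f(\z[1], \dotsc, \z[d+1])} = 0$ identically.  Now I use the recursion \eqref{eq:DeltaRecursively} with $s = d$: since the left side vanishes, $\dsum{d}{f(\z[1], \dotsc, \z[d] + \z[d+1])} = \dsum{d}{f(\z[1], \dotsc, \z[d])} + \dsum{d}{f(\z[1], \dotsc, \z[d+1])}$, i.e.\ the polynomial $g(\w) = \dsum{d}{f(\z[1], \dotsc, \z[d-1], \w)}$ (other variables frozen) is additive, $g(\w + \w') = g(\w) + g(\w')$; a polynomial satisfying Cauchy's equation is linear (has no constant term and no higher-degree terms — one can see this by comparing degrees, or by noting additivity forces all coefficients except the linear one to vanish).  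Hence $\dsum{d}{}$ is linear in $\z[d]$, and by its symmetry in all $d$ arguments it is linear in each of them.  I expect the main obstacle to be stating the inductive degree claim cleanly — keeping track of \emph{which} $s$-tuple of variables each of the three terms in \eqref{eq:DeltaRecursively} is a $\dsum{s}{}$ of, and invoking symmetry at the right moment so that a single application of Lemma~\ref{lem:Degree} in one variable suffices — rather than any real difficulty; the binomial and Cauchy-equation arguments are entirely routine.
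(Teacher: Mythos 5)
Your proposal is correct and takes essentially the same route as the paper: an induction via Lemma~\ref{lem:Degree} and the symmetry of $\dsum{s}{}$ to show that $\dsum{s}{f(\z[1],\dotsc,\z[s])}$ has degree at most $d+1-s$ in each variable, evaluation of the resulting constant at $0$ through the binomial identity $\sum_{k=1}^{d+1}(-1)^{d+1-k}\binom{d+1}{k} = -(-1)^{d+1}$, and additivity of $\dsum{d}{}$ in each argument read off from the vanishing of $\dsum{d+1}{}$ together with symmetry. You merely spell out the degree bookkeeping and the Cauchy-equation step that the paper leaves implicit.
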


A third lemma deals with the case of homogeneous $f$.

\begin{lem} \label{lem:Homogeneous}
Let $f(\z) \in \polgrp{G}{\z}$ be a homogeneous polynomial of degree $d \ge 1$. Then 
\[
	\dsum{d}{f(\a, \dotsc, \a)} = d! f(\a).
\]
\end{lem}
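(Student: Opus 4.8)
The plan is to reduce everything to the explicit, symmetric description of $\dsum{d}{}$ recorded just before the statement, specialize all the arguments to $\a$, and let homogeneity collapse the result to a single binomial coefficient sum that is a well-known closed form for $d!$.

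First I would substitute $\z[i] = \a$ for all $i \in \{1, \dotsc, d\}$ into
\[
	\dsum{d}{f(\z[1], \dotsc, \z[d])} = \sum_{\emptyset \neq \alpha \subseteq \{1, \dotsc, d\}} (-1)^{d - \abs{\alpha}} f \bigl( \sum_{i \in \alpha} \z[i] \bigr).
\]
A subset $\alpha$ of size $k$ then contributes $(-1)^{d-k} f(k\a)$, and there are $\binom{d}{k}$ such subsets for each $k \in \{1, \dotsc, d\}$, so that
\[
	\dsum{d}{f(\a, \dotsc, \a)} = \sum_{k=1}^{d} (-1)^{d-k} \binom{d}{k} f(k\a).
\]
Next I would invoke homogeneity: reading off the expansion of $f$ monomial by monomial shows $f(k\a) = k^d f(\a)$ for every $k$, even with coefficients in an arbitrary abelian group $G$ (the scalar $k^d$ acting by repeated addition). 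Plugging this in, and noting the $k = 0$ term vanishes, gives
\[
	\dsum{d}{f(\a, \dotsc, \a)} = \Bigl( \sum_{k=0}^{d} (-1)^{d-k} \binom{d}{k} k^d \Bigr) f(\a).
\]

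The remaining point — and the only one with any content — is the combinatorial identity $\sum_{k=0}^{d} (-1)^{d-k}\binom{d}{k}k^d = d!$. I would prove it by recognizing the left-hand side as the value at $0$ of the $d$-fold iterated forward difference of the monomial $t^d$: each forward difference drops the degree by one and multiplies the leading coefficient by the current degree, so after $d$ steps the monic polynomial $t^d$ becomes the constant $d!$. (Alternatively one may quote that this sum counts the surjections of a $d$-element set onto itself.) Combining this with the displayed formula yields $\dsum{d}{f(\a, \dotsc, \a)} = d! f(\a)$, as claimed.

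I expect essentially no obstacle here; the only care needed is the passage from a polynomial with coefficients in $G$ to the scalar identity, which is handled by additivity of both sides of the asserted equation in $f$ (reducing to the case $f = c\cdot\mu$ with $\mu$ a monic monomial of degree $d$ and $c \in G$). If one wished to avoid quoting the difference-operator identity altogether, the same conclusion can be reached by induction on $d$ directly from the recursion \eqref{eq:DeltaRecursively}, but that route is more tedious and less transparent than the explicit computation above.
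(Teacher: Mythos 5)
Your proof is correct and follows essentially the same route as the paper: specialize the explicit symmetric formula for $\dsum{d}{f}$ at $(\a, \dotsc, \a)$, use homogeneity to extract the factor $k^{d}$, and reduce to the numerical identity $\sum_{k=1}^{d}(-1)^{d-k}\binom{d}{k}k^{d}=d!$. The only (harmless) divergence is in that final step: the paper establishes the identity by the induction $C(m+1,m+1)=(m+1)\,C(m,m)$, using the vanishing $C(s,m)=0$ for $s>m$ supplied by Lemma~\ref{lem:Delta}, whereas you invoke the standard iterated forward-difference argument for $t^{d}$; both are valid.
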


\begin{proof}
Using homogeneity, we have
\[
	\dsum{s}{f(\a, \dotsc, \a)} = \sum_{k=1}^s (-1)^{s-k} \binom{s}{k}
	f(k \a) = \sum_{k=1}^s (-1)^{s-k} \binom{s}{k} k^d \cdot f(\a).
\]
We obviously have to evaluate sums of the form 
\begin{align*}
	C(s,m) = \sum_{k=1}^s (-1)^{s-k} \binom{s}{k} k^m		
		&& \text{(for $s \ge 1, m \ge 0$);}
\end{align*}
in particular, $C(d,d) = d!$ is what we need to show. From the previous lemma, we 
already know that $C(s,m) = 0$ whenever $s > m$. Now we compute
\begin{align*}
	C(m+1,m+1) &= \sum_{k=1}^{m+1} (-1)^{m+1-k} \binom{m+1}{k} k^{m+1} \\
				  &= \sum_{k=1}^{m+1} (-1)^{m-(k-1)} (m+1) \binom{m}{k-1} k^m \\
				  &= (m+1) \sum_{l=0}^{m} (-1)^{m-l} \binom{m}{l} (l+1)^m \\
				  &= (m+1) \sum_{l=0}^{m} (-1)^{m-l} \binom{m}{l} \Bigl( 1 + \sum_{i=1}^m
				  \binom{m}{i} l^i \Bigr) \\
				  &= (m+1) \sum_{l=0}^{m} (-1)^{m-l} \binom{m}{l} + (m+1) \sum_{i=1}^m 
				  \binom{m}{i} C(m,i) \\
				  &= (1-1)^m + (m+1) C(m,m) = (m+1) C(m,m),
\end{align*}
and together with $C(1,1) = 1$ this proves the lemma by induction.
\end{proof}

We will now use the previous results to establish an important technical lemma; it is
essential for the proof of the main theorem in Section~\ref{sec:V}. Note that it
introduces a feature not present in the example of Section~\ref{sec:II}, where we had
to deal with polynomials of no more than first degree. It does, however, fit in with
the general philosophy behind the argument---there is one situation in the proof
where one has to make a lot from apparently nothing, meaning where one has to create
useful $p$-big sets from useless ones, and the following lemma does just that.  

\begin{lem} \label{lem:Key}
Suppose that $p \in \beta\Zn$ is an $n$-dimensional idempotent. Let $G$ 
be an Abelian group and let $v(\z) \in \polgrp{G}{\z\,}$ be a polynomial in 
$\z = (z_1, \dotsc, z_n)$. Fix a subgroup $V \subseteq G$. If the set
\[
	B = \set{\b \in \Zn}{\text{$N \cdot v(\b) \in V$ for some $N \ne 0$}}
\] 
is $p$-big, then so is the set 
\[
	A = \set{\a \in \Zn}{v(\a)-v(0) \in V}.
\]  
\end{lem}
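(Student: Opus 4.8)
The plan is to introduce the subgroup
$W = \set{g \in G}{\text{$Ng \in V$ for some $N \neq 0$}}$
of $G$ (so $V \subseteq W$), to observe that the hypothesis says exactly that $B = \set{\b \in \Zn}{v(\b) \in W}$ is $p$-big, and then to reduce everything to the polynomial $u = v - v(0)$, which vanishes at $0$ and for which $A = \set{\a \in \Zn}{u(\a) \in V}$. With $u$ in hand I would argue by induction on $d = \deg u$ (if $v$ is constant then $A = \Zn$ and there is nothing to prove), the crucial device being the passage from $u(\z)$ to $\dsum{2}{u(\a, \z)} = u(\a + \z) - u(\a) - u(\z)$, which by Lemma~\ref{lem:Degree} has strictly smaller degree and still vanishes at $\z = 0$.

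Before the induction I would check that $v(0) \in W$. For this, apply Lemma~\ref{lem:FPsets} to the $p$-big set $B$ with $n = d + 1$, obtaining $\b[1], \dotsc, \b[d+1] \in B$ with $\IP{\b[1], \dotsc, \b[d+1]} \subseteq B$; then $v\bigl(\sum_{i \in \alpha} \b[i]\bigr) \in W$ for every nonempty $\alpha \subseteq \{1, \dotsc, d+1\}$, and since $\dsum{d+1}{v(\b[1], \dotsc, \b[d+1])}$ is both a fixed $\Z$-linear combination of these values and equal to $(-1)^d v(0)$ by Lemma~\ref{lem:Delta}, it lies in $W$, so $v(0) \in W$. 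Replacing $v$ by $u = v - v(0)$, the set $\set{\b \in \Zn}{u(\b) \in W}$ contains $B$, hence is $p$-big, while $A = \set{\a \in \Zn}{u(\a) \in V}$ and $u(0) = 0$.

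For the inductive step the idea is to use idempotency of $p$. Since $\set{\b}{u(\b) \in W}$ lies in $p \circ p$, for $p$-many $\a$ the set $\set{\z}{u(\a + \z) \in W}$ is $p$-big; intersecting it with $\set{\z}{u(\z) \in W}$ and using $u(\a) \in W$, one gets that for $p$-many $\a$ the set $\set{\z}{\dsum{2}{u(\a, \z)} \in W}$ is $p$-big. As $\z \mapsto \dsum{2}{u(\a, \z)}$ has degree $< d$ and vanishes at $0$, the induction hypothesis (with the same $V$, hence the same $W$) applies to it and yields: for $p$-many $\a$, the set $\set{\z}{u(\a + \z) - u(\a) - u(\z) \in V}$ is $p$-big. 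The hard part will be to extract from this ``approximate additivity of $u$'' the statement that $A$ itself is $p$-big. My plan for that is to pass to $G/V$, writing $\bar u$ for the reduction of $u$: every value of $\bar u$ lies in the finitely generated subgroup of $G/V$ spanned by the coefficients of $\bar u$, and on the $p$-big set $\set{\b}{u(\b) \in W}$ these values are torsion, hence lie in a finite set; since $p$ is an ultrafilter, $\bar u$ is therefore constant, with some value $f_0$, on a $p$-big set, i.e.\ $\set{\a}{\bar u(\a) = f_0} \in p$. Combining this with idempotency and with the $p$-big set of ``good'' $\a$ from the previous paragraph, I would produce a pair $(\a, \z)$ with $\bar u(\a) = \bar u(\z) = \bar u(\a + \z) = f_0$ and $\bar u(\a + \z) = \bar u(\a) + \bar u(\z)$, which forces $f_0 = 2 f_0$, so $f_0 = 0$ and $A = \set{\a \in \Zn}{\bar u(\a) = 0}$ is $p$-big.

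The step I expect to be the real obstacle is this last gluing: the output of the inductive step concerns only the error term $u(\a + \z) - u(\a) - u(\z)$ and says nothing about $u$ by itself, so promoting it to an exact statement about $u$ is precisely the ``making a lot from apparently nothing'' that the lemma is meant to achieve; what should make it work is the finiteness of the value set of $\bar u$ on a $p$-big set together with the idempotent relation. (The base case $d \le 1$ is covered by the above as well—$\dsum{2}{u(\a,\z)}$ vanishes identically once $u(0) = 0$ and $u$ is linear—but it can also be treated directly: there $u$ is a homomorphism, $u^{-1}(W)$ is a $p$-big subgroup, hence of rank $n$ since $\dim p = n$, so of finite index, and $u^{-1}(W)/u^{-1}(V)$ embeds in the finitely generated torsion group $W/V$ and is finite, making $u^{-1}(V) = A$ a finite-index subgroup, which is $p$-big by Lemma~\ref{lem:Lattice}.)
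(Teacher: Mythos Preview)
Your argument is correct, and the gluing step you were worried about works exactly as you describe: once $\bar u$ takes values in the finitely generated subgroup of $G/V$ spanned by its coefficients, the torsion part of that group is finite, so the ultrafilter pigeonhole gives a $p$-big level set $C = \{\a : \bar u(\a) = f_0\}$; intersecting $C$, the set $\{\a : -\a + C \in p\}$ (which is $p$-big since $C \in p = p \circ p$), and the set of ``good'' $\a$ from the inductive step, and then intersecting $C$, $-\a + C$, and $D_{\a}$ on the $\z$ side, produces the pair you want and forces $f_0 = 0$.

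Your route is genuinely different from the paper's. After the common first step (using $\dsum{d+1}{v}$ and Lemma~\ref{lem:FPsets} to show $v(0) \in W$), the paper decomposes $u = v - v(0)$ into homogeneous pieces $h_1 + \dotsb + h_d$ and handles them from the top down: for $h_d$ it applies the full polarization $\dsum{d}{v}$, uses that it is linear in each slot, invokes the $n$-dimensionality of $p$ to force the relevant $p$-big subgroups to be all of $\Zn$, and finally recovers $d!\, h_d(\a)$ on the diagonal via Lemma~\ref{lem:Homogeneous}. You instead induct on the degree using only the single difference $\dsum{2}{u}$ and then close with the torsion-finiteness plus idempotent-pigeonhole argument. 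This is shorter and more elementary---it avoids the homogeneous decomposition and Lemma~\ref{lem:Homogeneous} entirely---and, as you may have noticed, your main inductive argument never actually uses the hypothesis $\dim p = n$ (your parenthetical linear case does, but it is subsumed by the general step since $\dsum{2}{u} \equiv 0$ there). The paper's approach, by contrast, extracts explicit structural information about each homogeneous component along the way, at the cost of a more computational argument that leans on the dimension hypothesis throughout.
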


\begin{proof}
The idea of the proof is simple: Whenever $B$ contains $\IP{\b[1], \dotsc, 
\b[s]}$, there is some integer $N \neq 0$ such that $N \cdot \dsum{s}{v(\b[1], \dotsc, 
\b[s])} \in V$. Using this and the previous results, we can extract from $v$ its 
homogeneous parts of different degrees, and show that they are each contained in $V$ 
for $p$-many $\z$. Proceeding stepwise, we shall prove two things:
\begin{enumerate}[label=\Roman{*}., ref=\Roman{*}]
\item Without loss of generality, it may be assumed that $v(0) = 0$. 
\item The set $A = \set{\a}{v(\a) \in V}$ is $p$-big.
\end{enumerate}
The details are as follows.

\subsection*{\texorpdfstring{I}{Step I}} Let $d$ be the degree of $v(\z)$. The set $B$ is a member of the idempotent 
$p$, and by Lemma~\ref{lem:FPsets}, we may select $d + 1$ elements $\b[1], \dotsc,
\b[d+1]$ in $B$ with 
\[
	F = \IP{\b[1], \dotsc, \b[d+1]} \subseteq B.
\]
For each $\b$ in this finite $\mathrm{IP}$-set, there exists some  $N \neq 0$ such that 
$N \cdot v(\b) \in V$; if we let $N_1$ be the greatest common divisor of these
numbers, we guarantee that $N_1 \cdot v(\b) \in V$ for every $\b \in F$. By
Lemma~\ref{lem:Delta}, we now have
\[
	N_1 \cdot v(0) = N_1 \cdot (-1)^d \dsum{d+1}{v(\b[1], \dotsc, \b[d+1])} \in V.
\]
If $\b$ is any element of $B$ and $N \cdot v(\b) \in V$, the expression 
$N_1 N \cdot \bigl( v(\b) - v(0) \bigr)$ is also an element of $V$; this means that 
the set
\[
	\set{\b}{\text{$N \cdot \bigl( v(\b) - v(0) \bigr)\in V$ for some $N \ne 0$}}
\]
is equally $p$-big. We may therefore replace $v(\z)$ by $v(\z) - v(0)$ and assume 
$v(0) = 0$.

\subsection*{\texorpdfstring{II}{Step II}} We decompose $v$ into homogenous polynomials,
\[
	v(\z) = \sum_{i=1}^d h_i(\z),
\]
say, with $h_i(\z)$ homogeneous of degree $i$. For $1 \le i \le d$, let
\[
	A_i = \set{\a}{h_i(\a) \in V}.
\]
We shall argue that $A_d \in p$; once this is known, the same reasoning applies 
to the polynomial $v(\z) - h_d(\z)$ where it gives $A_{d-1} \in p$, and so on, until
one has $A_i \in p$ for each $i$. The result follows because $A$ contains the intersection
of all $A_i$. 

To show that $A_d$ is $p$-big, we use a similar---but more careful---approach as 
before. By Lemma~\ref{lem:FPsets}, there are actually $p$-many $\b[1], \dotsc, \b[d] 
\in B$ with $\IP{\b[1], \dotsc, \b[d]} \subseteq B$, and again, to each choice we may find some 
$N_1 \ne 0$ such that $N_1 \cdot \dsum{d}{v(\b[1], \dotsc, \b[d])}$ is an element of
$V$. When selecting only $\b[1], \dotsc, \b[d-1]$ from these, the set
\[
	B_d = \set{\b}{\text{$N \cdot \dsum{d}{v(\b[1], \dotsc, \b[d-1], \b)} \in V$ for 
	some $N \ne 0$}}
\]
is then always $p$-big by construction.

$B_d$ is also a subgroup of $\Zn$, the polynomial $\dsum{d}{v(\z[1], \dotsc, \z[d])}$
being linear in each variable (see Lemma~\ref{lem:Delta}). Since $p$ is 
$n$-dimensional, this subgroup has to have rank $n$ and has to contain a set of the
form $L \cdot \Zn$ for some nonzero $L$. For each $\b \in \Zn$, one gets
\[
	N L \cdot \dsum{d}{v(\b[1], \dotsc, \b[d-1], \b)} = N \cdot \dsum{d}{v(\b[1], 
	\dotsc, \b[d-1], L \cdot \b)} \in V
\] 
for some $N \neq 0$, and so we conclude that $B_d = \Zn$.

Next, consider the set
\[
	B_{d-1} = \set{\b}{\text{for some $N \neq 0$, $N \cdot \dsum{d}{v(\b[1], \dotsc, 
	\b[d-2], \b, \a[d])} \in V$ for all $\a[d]$}};
\]
by the above, it is $p$-big, and a repetition of the argument shows that $B_{d-1} = 
\Zn$, too. 
Continuing in this way, we eventually find a nonzero integer $N_2$ such that 
\[
	N_2 \cdot \dsum{d}{v(\a[1], \dotsc, \a[d])} \in V 
\]
for any choice of $\a[1], \dotsc, \a[d] \in \Zn$.

By Lemma~\ref{lem:Degree}, we have 
\[
	\dsum{d}{h_d(\a[1], \dotsc, \a[d])} = \dsum{d}{v(\a[1], \dotsc, \a[d])},
\] 
because all terms of degree less than $d$ disappear. Finally, using 
Lemma~\ref{lem:Homogeneous} we get 
\[
	N_2 d! \cdot h_d(\a) = N_2 \cdot \dsum{d}{v(\a, \dotsc, \a)} \in V
\]
for all $\a$, and hence $h_d(\a) \in V$ whenever $\a \in N_2 d! \cdot \Zn$. 
The latter set is $p$-big and so $A_d \in p$. This ends the proof of the second part,
and establishes the lemma.
\end{proof}

\section{Statement and proof of the main results} \label{sec:V}

After all the preliminary work in the previous two sections, we are now ready to
state and prove the main result. The notation is somewhat heavy, but this generality
is needed because of the inductive nature of the proof.

\begin{thmsec} \label{thm:MainA}
For $j=1, \dotsc, s$, let $p_j \in \beta\Zn[n_j]$ be an idempotent, and let
$U_1, \dotsc, U_m$ be commuting unitary operators on a Hilbert space $\Hil$.
Given any $m$ polynomials $f_1, \dotsc, f_m \in \polint{\z[1],
\dots,\z[s]}$---with $\z[j]$ of dimension $n_j$---satisfying $f_i(0) = 0$ for
all $i = 1, \dotsc, m$, define an operator $P$ on $\Hil$ by
\[
	P = \plim{p_1}_{\z[1]} \cdots \plim{p_s}_{\z[s]} \unop{f_i(\z[1], \dotsc, \z[s])}.
\]
Then $P$ is always an orthogonal projection. Any two operators defined in this way commute.
\end{thmsec}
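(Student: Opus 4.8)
The plan is to prove Theorem~\ref{thm:MainA} by induction on a suitable complexity parameter of the system of polynomials $f_1, \dotsc, f_m$, mirroring the structure of the extended example in Section~\ref{sec:II} but now accommodating several multi-dimensional variables and higher degree. The right parameter is something like the pair (total degree of the $f_i$ in the \emph{last} variable $\z[s]$, ordered lexicographically after something measuring the number of variables), so that the auxiliary operators obtained by the substitution trick genuinely decrease it. First I would dispose of the base case: when every $f_i$ is independent of $\z[s]$, the outermost limit $\plim{p_s}$ is superfluous and we are reduced to the same statement with $s-1$ variables, so the real base case is $s=1$ with linear polynomials, which is handled exactly as the $Q_{\a}$ in Section~\ref{sec:II}---one shows $P^2 = P$ directly using \eqref{eq:doubleplimit} and then invokes Lemma~\ref{lem:Projections} since $P$ is normal as a weak limit of unitary operators (Lemma~\ref{lem:StrongConvergence}(1)).

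For the inductive step, following Step~1 of Section~\ref{sec:II}, I would introduce the auxiliary operator obtained by replacing $\z[s]$ with $\a[s] + \z[s]$ and splitting off the lower-order remainder: set
\[
	Q = \plim{p_s}_{\a[s]} \plim{p_1}_{\z[1]} \cdots \plim{p_s}_{\z[s]}
		\unop{\dsum{2}{f_i}(\z[1],\dotsc,\z[s-1],\a[s],\z[s])},
\]
where $\dsum{2}{f_i}$ in the last two copies of the $s$-th variable has strictly lower degree in that variable by Lemma~\ref{lem:Degree}; hence by the inductive hypothesis (applied to the idempotent $p_s$ playing the role of both the $\a[s]$- and $\z[s]$-limit, which is legitimate since $p_s \circ p_s = p_s$) $Q$ is an orthogonal projection, and so are the partial operators $Q_{\a[s]}$ obtained by fixing $\a[s]$. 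One then uses the splitting $\Hil = \ker Q \oplus \overline{\im Q}$. On $\ker Q$, van der Corput's trick (Lemma~\ref{lem:VanDerCorput}), combined with strong-from-weak convergence (Lemma~\ref{lem:StrongConvergence}), forces $Px = 0$; on $\overline{\im Q}$ one first proves the strong statement $\plim{p_s}_{\a[s]} \plim{}_{\z} \norm{U^{\dsum{2}{f_i}(\dotsb)} x - x} = 0$ for $x$ in each $\im Q_{\a[s]}$, upgrades it via Lemma~\ref{lem:Key} to a $p_s$-big set of $\a[s]$ (this is exactly what Lemma~\ref{lem:Key} was built for---turning ``$N\cdot v(\b)\in V$ for some $N$'' into ``$v(\a)\in V$'' for $p$-many $\a$), and then collapses the double limit with \eqref{eq:doubleplimit} to conclude $P^2 x = Px$. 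Since $P$ is normal, Lemma~\ref{lem:Projections} finishes it.

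The main obstacle I expect is the bookkeeping in the inductive step: one must verify that $Q$ really falls under the inductive hypothesis---i.e. that $\dsum{2}{f_i}$ as a function of the enlarged tuple of variables $(\z[1],\dotsc,\z[s-1],\a[s],\z[s])$ still lies in $\polint{\,\cdot\,}$, still vanishes at the origin, and has \emph{strictly smaller} complexity parameter than the original $f_i$ so the induction is well-founded. The subtlety is that introducing $\a[s]$ adds a variable, so the parameter must be chosen to weigh ``degree in $\z[s]$'' heavily enough that lowering it outranks adding a new variable of the same dimension $n_s$; a lexicographic or weighted parameter $(\deg_{\z[s]} f_i, \dots)$, decreased strictly by Lemma~\ref{lem:Degree}, does the job, but the precise formulation needs care. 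The role of $n_j$-dimensionality of the idempotents $p_j$ (needed in Lemma~\ref{lem:Key}) should be preserved automatically, since in the reduction we only ever use $p_s$ again, with its given dimension $n_s$. Finally, the commutativity assertion is easy and I would prove it last: any two operators $P, P'$ of the stated form are each weak limits of powers of the commuting unitaries $U_1,\dotsc,U_m$, hence limits of mutually commuting operators; passing to the limit in $\inner{PP'x}{y}$ versus $\inner{P'Px}{y}$ using the defining $p$-limits and the fact that each $U_i^k$ commutes with each $U_j^l$ gives $PP' = P'P$ directly.
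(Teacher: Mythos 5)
Your outline reproduces the surface of the argument (degree-lowering via $\dsum{2}{}$, van der Corput on a kernel, strong convergence plus Lemma~\ref{lem:Key} on an image, Lemma~\ref{lem:Projections} to finish), but the inductive step for $s\ge 2$ as you describe it would fail, for a reason that is central to how the paper actually proceeds: \emph{iterated ultrafilter limits do not commute}. You place the new averaging variable $\a[s]$ outermost, so membership in $\ker Q$ or $\im Q$ only gives you statements of the form ``$\plim{p_s}_{\a[s]}\bigl(\plim{p_1}_{\z[1]}\cdots\plim{p_s}_{\z[s]}(\cdots)\bigr)=0$.'' To run van der Corput on the innermost limit of $P$, or to collapse $\plim{p_s}_{\a[s]}\plim{p_s}_{\z[s]}$ via \eqref{eq:doubleplimit}, you need the corresponding statement with $\a[s]$ sitting \emph{inside} the $\z[1],\dotsc,\z[s-1]$ limits (i.e.\ pointwise in, or for $p$-many, outer variables), and one order of quantifiers does not imply the other. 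Lemma~\ref{lem:VanDerCorput} is stated and proved only for a single idempotent limit; no multi-variable version is available, and the paper deliberately avoids needing one: for $s\ge2$ it peels off the \emph{outermost} variable, writing $P=\plim{p}_{\a}\bigl(\unop{g_i(\a)}\bigr)P_{\a}$ with $P_{\a}$ the inner $(s-1)$-fold limit, and never invokes van der Corput there at all.

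The second missing idea is the splitting itself. A single decomposition $\Hil=\ker Q\oplus\overline{\im Q}$ is not enough once the ``dimension'' of the configuration can vary --- compare the rank-one versus rank-two cases of Theorem~\ref{thm:C}, where the kernel must already be an intersection over many subgroups. The paper's crucial device (Step~III) is to take $r$ maximal such that $p$-many tuples $\a[1],\dotsc,\a[r]$ make the polynomial vectors $v(\a[1]),\dotsc,v(\a[r])$ span a rank-$r$ subgroup $V$ of the free group $F^m$, and to split along $\bigcap_{\mathcal{A}}\ker P_{\a[1]}\dotsm P_{\a[r]}$; the maximality of $r$ is precisely what furnishes the hypothesis of Lemma~\ref{lem:Key}, namely that $\set{\b}{\text{$N\cdot v(\b)\in V$ for some $N\ne0$}}$ is $p$-big. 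You invoke Lemma~\ref{lem:Key} but never construct the subgroup $V$ or explain why its hypothesis holds, and you also omit the preliminary reduction (Step~I, via Lemma~\ref{lem:Dimension}) to $\dim p=n$, without which Lemma~\ref{lem:Key} does not apply. Finally, the bookkeeping you flag as delicate is resolved in the paper by a clean double induction --- $(1,d)$ from $(1,d-1)$ and $(2,d-1)$, and $(s,d)$ from $(s-1,d)$ and $(1,d)$ --- rather than by weighting the degree in the last variable.
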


\begin{proof} 
We will suppose that all $n_j$ are equal to some $n$ and all $p_j$ are
equal to some $p$, in order to simplify notation. The argument need not be changed in
any way to accommodate the more general situation---because of the inductive
character of the proof, we find ourselves working on no more than one $p$-limit at a
time anyway.

Let us first observe that the last part of the statement---commutativity of different
projections---is obviously true, for all operators generated for various selections
of polynomials are certain weak limits of commuting unitary operators. We may
therefore assume commutativity wherever needed.

The remainder of the proof is essentially by induction on the number $s$ of
$p$-limits taken, but there are some complications involving the case $s=1$.  In
fact, different arguments are needed for $s = 1$ and for $s \ge 2$, since the outmost
$p$-limit is one of unitary operators in the former situation, but one of projections
in the latter. To make the induction be more transparent, we shall use the
abbreviation $(s,d)$ when referring to the statement of the theorem for a certain
value of $s$ and all possible choices of polynomials $f_i$ of degree at most $d$ in
any of their variables $\z[j]$.

The proof will be divided into several steps, the second and third inductive in
nature:
\begin{enumerate}[label=\Roman{*}., ref=\Roman{*}]
\item We argue that the ultrafilter $p$ may be assumed to be $n$-dimensional, without loss
of generality. 
 
\item \label{item:II} We establish the case $(1,d)$, that is, we show that the operator
\[
	P = \plim{p}_{\z} \unop{f_i(\z)},
\]	 
with $f_i$ of degree at most $d$, is an orthogonal projection, assuming the 
statement of the theorem in the two cases $(1,d-1)$ and $(2,d-1)$. Specifically, we
need to assume that the operators
\[
	Q_{\a} = \plim{p}_{\z} \unop{f_i(\a + \z) - f_i(\a) - f_i(\z)}
\]
and
\[
	Q = \plim{p}_{\a} Q_{\a}
\]
are orthogonal projections.

\item \label{item:III} For $s \ge 2$, we derive $(s,d)$ from $(s-1,d)$ and $(1,d)$. 
Essentially, we introduce the new polynomials
\[
	f_i'(\a, \z[2], \dotsc, \z[s]) = f_i(\a, \z[2], \dotsc, \z[s]) - f_i(\a, 0, \dotsc, 0)
\]
and assume that for each $\a \in \ZZ^n$, the operator
\[
	P_{\a} = \plim{p}_{\z[2]} \cdots \plim{p}_{\z[s]} \unop{f_i'(\a, \z[2], 
	\dotsc, \z[s])}
\]
is an orthogonal projection. We then construct a suitable splitting of the underlying
Hilbert space $\Hil$.

\item Using the splitting introduced in the previous part, we show that $P$ is an orthogonal
projection.
\end{enumerate}

Once we have established all of the previous, our work will be done. For the statement of
the theorem is definitely true in the case $(1,0)$---if all $f_i$ equal zero, $P$ is
just the identity---and then \ref{item:II} and \ref{item:III} suffice to prove the
entire theorem by induction.  Let us now take a detailed look at the four steps.

\subsection*{\texorpdfstring{I}{Step I}} 

Using Lemma~\ref{lem:Dimension}, we begin by adjusting the situation to make sure that
the dimension of $p$ is equal to the rank of the group. Of course, this will change
the polynomials under consideration; but since their degrees are not increased, it
does not affect the proof. To write this down precisely is somewhat cumbersome;
so let us look at the case of just one operator $U$ and one polynomial $f(\z)$ to see
what happens. We shall use $\z$ for an $n$-dimensional and $\w$ for an
$s$-dimensional variable. If $s = \dim p$ and $\phi$ are as in Lemma~\ref{lem:Dimension},
then $q = \phis(p)$ is idempotent and $s$-dimensional. By Lemma~\ref{lem:plimits},
\begin{equation} \label{eq:Transformation}
	\plim{p}_{\z} U^{f(\z)} = \plim{q}_{\w} U^{f(\phi^{-1}(\w))}.
\end{equation}
But $g(\w) = f(\phi^{-1}(\w)) \in \polint{\w}$ satisfies $g(0) =
0$, and is of degree no larger than that of $f$; thus a proof that the right-hand
operator in \eqref{eq:Transformation} is a
projection gives the result for the left-hand one, too. The same is true in the
general setting of the theorem, though somewhat unpleasant to write down in detail. 

In any case, we shall assume from now on that $\dim p = n$.  Lemma~\ref{lem:Key} is
then applicable; it will make its entry in the third step of the proof.

\subsection*{\texorpdfstring{II}{Step II}} 

As stated above, we shall now assume that both $Q$ and all the $Q_{\a}$ are 
orthogonal projections; this is permissible because each polynomial
\[ 
	f_i(\a + \z) - f_i(\a) - f_i(\z)
\] 
has degree at most $(d - 1)$ in $\a$ and $\z$ (see Lemma~\ref{lem:Degree}).
The projection $Q$ induces a splitting $\Hil = \ker Q \oplus \im Q$ of the underlying
Hilbert space; we shall use it to conclude that $P^2 x = P x$ for all $x \in
\Hil$.

First, consider $x \in \ker Q$. Since $Q x = \plim{p}_{\a} Q_{\a} x$, 
Lemma~\ref{lem:StrongConvergence} implies that the convergence is strong,
\[
	\plim{p}_{\a} \norm{Q_{\a} x} = 0.
\] 
We now use van der Corput's trick to get $P x = 0$, the condition
\begin{align*}
	& \plim{p}_{\a} \plim{p}_{\z} \Inner{\unop{f_i(\a + \z)} x}{\unop{f_i(\z)} x} \\
	=& \plim{p}_{\a} \plim{p}_{\z} \Inner{\unop{f_i(\a + \z) - f_i(\a) - f_i(\z)} x}
	{\unop{-f_i(\a)} x} \\
	=& \plim{p}_{\a} \Inner{Q_{\a} x}{\unop{-f_i(\a)} x} = 0 
\end{align*}
in Lemma~\ref{lem:VanDerCorput} being fulfilled. A fortiori, $P^2 x = P x$.

Second, consider an arbitrary $x \in \im Q$, which then satisfies $Q x = x$. We again
get strong convergence from Lemma~\ref{lem:StrongConvergence}, so that
\[
	\plim{p}_{\a} \plim{p}_{\z} \Norm{\unop{f_i(\a + \z)} x - \unop{f_i(\a) + 					f_i(\z)} x} = 0.
\] 
But then
\begin{align*}
	P^2 x &= \plim{p}_{\a} \plim{p}_{\z} \unop{f_i(\a) + f_i(\z)} x 
			 = \plim{p}_{\a} \plim{p}_{\z} \unop{f_i(\a + \z)} x \\
			&= \plim{p}_{\z} \unop{f_i(\z)} x = P x.		 
\end{align*}
We now have $P^2 = P$; obviously, $P$ is normal, and the result---that $P$ is an 
orthogonal projection---follows from Lemma~\ref{lem:Projections}. 

\subsection*{\texorpdfstring{III}{Step III}} 

This is the most interesting part of the argument. We start from the 
inductive assumption that each $P_{\a}$ is an orthogonal projection, and aim for a useful 
splitting of the space $\Hil$, depending on the projections $P_{\a}$ and the 
polynomials 
\[
	f_i'(\a, \z[2], \dotsc, \z[s]) = f_i(\a, \z[2], \dotsc, \z[s]) 
	- f_i(\a, 0, \dotsc, 0).
\] 
Note that $f_i'(\a, 0, \dotsc, 0) = 0$ for any $\a$, which means that the new
polynomials $f_i'$ still satisfy the conditions of the theorem, while having fewer
variables.

After the introduction of $g_i(\a) = f_i(\a, 0, \dotsc, 0)$, the operator $P$ is then
given by the limit
\[
	P = \plim{p}_{\a} \Bigl( \unop{g_i(\a)} \Bigr) P_{\a}.
\]

We shall let $F \subseteq \polgrp{\mathrm{Int}}{\z[2], \dotsc, \z[s]}$ denote the set of
polynomials of degree at most $d$ in each variable; $F$ is a free group of finite rank by
Lemma~\ref{lem:Int}. The product $F^m$ is also free, as are all of its subgroups, and for
any $\a \in \Zn$, the vector
\[
	v(\a) = \bigl( \tilde{f}_1(\a, \z[2], \dotsc, \z[s]), \dotsc, \tilde{f}_m(\a, \z[2], 
	\dotsc, \z[s]) \bigr)
\]
is an element of $F^m$. We introduce the notation $V(\a[1], \dotsc, \a[r])$ for the
subgroup of $F^m$ generated by the vectors $v(\a[1]), \dotsc, v(\a[r])$.

The \emph{crucial idea} is to let $r \ge 0$ be the maximal integer for which 
\[
	\set{\a[1]}{\cdots \set{\a[r]}{\text{$V(\a[1], \dotsc, \a[r])$ has rank $r$}} 
	\in p \cdots} \in p. 
\]
Such an $r$ has to exist, because we are working inside a fixed group of finite rank;
if not even $\set{\a[1]}{\text{$V(\a[1])$ has rank one}}$ is in $p$, we set $r=0$ to
keep the notation consistent. Whenever $\a[1], \dotsc, \a[r]$ are taken, in the
correct order, from these nested sets, the rank of the group $V(\a[1], \dotsc,
\a[r])$ is $r$. 

With $r$ being defined in that manner, one also has
\[
	\set{\a[1]}{\cdots \set{\a[r+1]}{\text{$V(\a[1], \dotsc, \a[r+1])$ has rank less 
	than $r$}} \in p \cdots} \in p.
\]
Intersecting with the previous set and using that $p$ is an ultrafilter, we obtain
\begin{align*}
	\set{\a[1]}{\cdots \set{\a[r]}{&\text{$V(\a[1], \dotsc, \a[r])$ has rank $r$
	and} \\
	&\set{\b}{\text{$V(\a[1], \dotsc, \a[r], \b)$ also has rank $r$}} \in p} \in p
	\cdots} \in p.
\end{align*}
But if $V(\a[1], \dotsc, \a[r])$ and $V(\a[1], \dotsc, \a[r], \b)$ both have rank $r$, 
it means that some nonzero multiple of $\b$ has to lie in the first group. We can 
therefore conclude from the previous line that 
\begin{equation} \label{eq:BigSet} \begin{split}
	\set{\a[1]}{\cdots \set{\a[r]}{&\text{$V = V(\a[1], \dotsc, \a[r])$ has rank $r$ 
	and} \\
	&\set{\b}{\text{$N \cdot v(\b) \in V$ for some $N \neq 0$}} \in p} \in p 
	\cdots} \in p.
\end{split} \end{equation}

Finally, let $\mathcal{A}$ denote the set of $r$-tuples $\bigl( \a[1], \dotsc, \a[r] \bigr)$, 
taken in the right order from the nested sets in \eqref{eq:BigSet}; for any one of them, 
the group $V = V(\a[1], \dotsc, \a[r])$ has rank $r$ and the set $\set{\b}{\text{$N 
\cdot v(\b) \in V$ for some $N \neq 0$}}$ is $p$-big.

We have now arrived at our destination---we shall use the splitting $\Hil = \Hil_1 \oplus 
\Hil_1^{\bot}$, where
\[
	\Hil_1 = \bigcap_{\mathcal{A}} \ker P_{\a[1]}\dotsm P_{\a[r]} 
		\qquad \text{and} \qquad
	\Hil_1^{\bot} = \overline{\sum_{\mathcal{A}} \im P_{\a[1]}\dotsm P_{\a[r]}}.
\]
This ends the third step; the proof, based on this splitting, that $P$ is an orthogonal 
projection is contained in the remaining part of the proof.

\subsection*{\texorpdfstring{IV}{Step IV}} 

It remains to prove that the operator $P$ really is a projection. Because of
the splitting from \ref{item:III}, we have two subspaces to consider. Let us begin
with the one that is easier to handle, and show that $P$ is zero on $\Hil_1$. 
If $x \in \Hil_1$, we have $P_{\a[1]} \dotsm P_{\a[r]} x = 0$ for $p$-many 
$\a[1], \dotsc, \a[r]$, thus
\[
	P^r x = \plim{p}_{\a[1]} \cdots \plim{p}_{\a[r]} \Bigl( \unop{g_i(\a[1]) + 
	\dotsb + g_i(\a[r])} \Bigr) P_{\a[1]} \dotsm P_{\a[r]} x = 0.
\]
As $P$ is self-adjoint, $P^r x = 0$ quickly leads to $P x = 0$. 

The complementary subspace $\Hil_1^{\bot}$, on the other hand, requires more
attention. Here, we shall show that $P$ is equal to another projection $P'$,
to be defined below, and constructed with the help of the inductive assumptions.
So suppose that $x \in \im P_{\a[1]} \dotsm P_{\a[r]}$ for a certain tuple $\bigl(
\a[1], \dotsc, \a[r] \bigr) \in \mathcal{A}$; we shall reason that $P x = P' x$,
which, by the usual density argument, is sufficient for equality on all of $\Hil_1^{\bot}$. 

Since $x$ lies in the image of the product $P_{\a[1]} \dotsm P_{\a[r]}$, we get
$P_{\a[k]} x = x$ for each $k = 1, \dotsc, r$. Apply
Lemma~\ref{lem:StrongConvergence} to get strong convergence, in the form
\[
	\plim{p}_{\z[2]} \cdots \plim{p}_{\z[s]} \Norm{\unop{f_i'(\a[k], \z[2], 
	\dotsc, \z[s])} x - x} = 0.
\]
One easily derives that for any $r$ integers $N_1, \dotsc, N_r$,
\[
	\plim{p}_{\z[2]} \cdots \plim{p}_{\z[s]} \Norm{\unop{N_1 f_i'(\a[1], \z[2], 
	\dotsc, \z[s]) + \dotsm + N_r f_i'(\a[r], \z[2], \dotsc, \z[s])} x - x} = 0.
\]
The vectors $v(\a[k])$ span the group $V = V(\a[1], \dotsc, \a[r])$, and so we can
conclude that the equality
\begin{equation} \label{eq:StrongLimit}
	\plim{p}_{\z[2]} \cdots \plim{p}_{\z[s]} \Norm{\unop{h_i(\z[2], \dotsc, \z[s])} 
	x - x} = 0
\end{equation}
is true for any element $(h_1, \dotsc, h_m) \in V$.

We already saw, after \eqref{eq:BigSet}, that the set
\[
	\set{\b}{\text{$N \cdot v(\b) \in V$ for some $N \neq 0$}};
\]
is $p$-big, because of how the integer $r$ was chosen.
As a consequence of Lemma~\ref{lem:Key}, which applies because $\dim p = n$, the set
\[
	\set{\c}{v(\c) - v(0) \in V}
\]
is now also $p$-big. Together with \eqref{eq:StrongLimit}, this gives
\[
	\plim{p}_{\z[2]} \cdots \plim{p}_{\z[s]} \Norm{\unop{f_i'(\c, \z[2], \dotsc, 
	\z[s]) - f_i'(0, \z[2], \dotsc, \z[s])} x - x} = 0
\]
for $p$-many $\c \in \Zn$, hence
\[
	\plim{p}_{\c} \plim{p}_{\z[2]} \cdots \plim{p}_{\z[s]} \Norm{\unop{f_i'(\c, 
	\z[2], \dotsc, \z[s])} x - \unop{f_i(0, \z[2], \dotsc, \z[s])} x} = 0.
\]

The following computation now ends the proof of the fourth and last step:
\begin{align*}
	P x &= \plim{p}_{\c} \Bigl( \unop{g_i(\c)} \Bigr) P_{\c} x = \\
		 &= \plim{p}_{\c} \Bigl( \unop{g_i(\c)} \Bigr) \plim{p}_{\z[2]} \cdots 
		 	\plim{p}_{\z[s]} \unop{f_i'(\c, \z[2], \dotsc, \z[s])} x \\
		 &= \Bigl( \plim{p}_{\c} \unop{g_i(\c)} \Bigr) \Bigl( \plim{p}_{\z[2]} 
		 \cdots \plim{p}_{\z[s]} \unop{f_i(0, \z[2], \dotsc, \z[s])} \Bigr) x
\end{align*}
By the hypotheses $(1,d)$ and $(s-1,d)$, both bracketed expressions define orthogonal
projections, which moreover commute with each other. Let $P'$ be the projection
operator defined as their product; then
we have shown that $P x = P' x$, for all $x \in \im P_{\a[1]} \dotsm P_{\a[r]}$. This
proves that $P$ is also an orthogonal projection when restricted to $\Hil_1^{\bot}$,
and thus completes the proof.
\end{proof}

\subsection*{Comparison with the original}

Since the main purpose of this paper is to reprove the result of \cite{BFM} using
ultrafilters, it may be worthwhile to compare the proof there with the one just
given. The overall argument is the same---a proof by induction, relying on a
splitting defined in terms of certain groups, which depend on the data (the
given IP-sets in \cite{BFM}, the idempotent ultrafilter here). To make the inductive
step work out, we have to allow for multiple $p$-limits, necessitating a more
intricate argument; our proof is consequently slightly longer than the original one.

We should point out that the third step of the proof uses the same splitting as that
in \cite{BFM}. The notion of dimension, mentioned before, is also apparent in the
original paper, and has to be dealt with in much the same way. While we use
Lemma~\ref{lem:Dimension} for this purpose, Bergelson, Furstenberg, and McCutcheon
rely on the Milliken-Taylor theorem to handle the different possible dimensions in a
unified manner. Several other auxiliary results, proved or quoted in the other paper,
also occur at some point in our proof.

Lastly, IP-limits have been replaced by limits along ultrafilters, which means that no
subsequences (or more strictly sub-IP-rings) have to be chosen to get convergence. This
adds much convenience to the argument.

\subsection*{An IP-version} 

As in the example in Section~\ref{sec:II}, we can derive from the previous theorem a
version with IP-sets; because of the many subscripts and superscripts, it is more
complicated to write down.

\begin{thmsec} \label{thm:MainB}
For $j=1, \dotsc, s$, let $q_j \in \beta\mathcal{F}$ be an uncongested idempotent,
and let $U_1, \dotsc, U_m$ be commuting unitary operators on a Hilbert space $\Hil$.
Given $m$ polynomials $f_1, \dotsc, f_m \in \polint{\z[1], \dots,\z[s]}$---with
$\z[j]$ of dimension $n_j$---satisfying $f_i(0) = 0$ for all $i = 1, \dotsc, m$,
and given additionally IP-sets $w_{\bullet}^{k,j}$, indexed by $j = 1, \dotsc, s$ and
$k=1, \dotsc, n_j$, define an operator $P$ on $\Hil$ by
\[
	P = \plim{q_1}_{\alph[1]} \cdots \plim{q_s}_{\alph[s]} 
	\unop{f_i(w_{\alph[1]}^{1, 1}, \dotsc, w_{\alph[1]}^{n_1,1}, \dotsc, 
						w_{\alph[s]}^{1,s}, \dotsc, n_{\alph[s]}^{n_s,s})}  
\]
Then $P$ is always an orthogonal projection. Any two of the operators defined in
this way commute.
\end{thmsec}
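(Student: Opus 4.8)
The plan is to reduce Theorem~\ref{thm:MainB} to Theorem~\ref{thm:MainA}, by pushing the uncongested idempotents $q_j$ forward along the given IP-sets; this is precisely the device used to pass from Theorem~\ref{thm:A} to Theorem~\ref{thm:B} in Section~\ref{sec:II}, only now there are several blocks of variables. For each $j = 1, \dotsc, s$, I would first bundle the $n_j$ IP-sets $w_{\bullet}^{1,j}, \dotsc, w_{\bullet}^{n_j,j}$ into a single map
\[
	\phi_j \colon \mathcal{F} \to \Zn[n_j], \qquad
	\alpha \mapsto \bigl( w_{\alpha}^{1,j}, \dotsc, w_{\alpha}^{n_j,j} \bigr).
\]
Each component of $\phi_j$ satisfies \eqref{eq:nadditive}, so that $\phi_j(\alpha \cup \beta) = \phi_j(\alpha) + \phi_j(\beta)$ whenever $\alpha < \beta$. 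The argument proving Lemma~\ref{lem:uncongestedidp} then goes through word for word, with $n_{\alpha}$ replaced by the vector $\phi_j(\alpha)$ and scalar addition replaced by addition in $\Zn[n_j]$; it shows that the induced map $(\phi_j)_{\ast} \colon \betaFun \to \beta\Zn[n_j]$ is a homomorphism of semigroups. In particular $p_j := (\phi_j)_{\ast}(q_j)$ is an idempotent of $\beta\Zn[n_j]$, since $q_j$ is an uncongested idempotent.

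The second step is to rewrite the weak operator limit defining $P$ in terms of the $p_j$. Fix $x, y \in \Hil$ and put $g(\z[1], \dotsc, \z[s]) = \inner{\unop{f_i(\z[1], \dotsc, \z[s])} x}{y}$; as the $U_i$ are unitary, $g$ is bounded and hence takes values in a compact subset of $\CC$. Applying Lemma~\ref{lem:phistar} to the innermost $q_s$-limit---with $\phi = \phi_s$ and the outer variables held fixed as parameters---turns $\plim{q_s}_{\alph[s]} g\bigl( \dotsc, \phi_s(\alph[s]) \bigr)$ into $\plim{p_s}_{\z[s]} g( \dotsc, \z[s])$. The result is again a bounded function of the remaining parameters, so the same lemma applies to the $q_{s-1}$-limit, and so on; after peeling off all $s$ limits one obtains
\[
	P = \plim{p_1}_{\z[1]} \cdots \plim{p_s}_{\z[s]} \unop{f_i(\z[1], \dotsc, \z[s])}
\]
as a weak operator limit. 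Since $p_j \in \beta\Zn[n_j]$ is idempotent, the $U_i$ commute, and the $f_i \in \polint{\z[1], \dotsc, \z[s]}$ satisfy $f_i(0) = 0$, Theorem~\ref{thm:MainA} applies directly and shows that $P$ is an orthogonal projection. For the commutativity of two operators defined in this way, I would argue as in Theorem~\ref{thm:MainA}: a short $p$-limit computation (using that the $U_i$ commute with one another) shows that any such operator commutes with each $U_k$, hence with every product $\unop{g_i}$, and therefore with every operator of the same form.

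I do not expect a genuine obstacle, since all the substance already lies in Theorem~\ref{thm:MainA}; the remaining points only need routine care. One is the vector-valued restatement of Lemma~\ref{lem:uncongestedidp}, which works precisely because $q_j$ is uncongested, so that when picking $\beta$ from a $q_j$-big set one may insist on $\alpha < \beta$ and thus invoke \eqref{eq:nadditive}. The other is the bookkeeping in the iterated change of variables: one must keep track of which limits have already been converted and verify at each stage that the function acted on by the next $q_j$-limit is still bounded---it is, being an iterated $p$-limit of the bounded function $g$---so that Lemma~\ref{lem:phistar} remains applicable.
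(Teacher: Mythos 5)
Your proposal is correct and follows essentially the same route as the paper: bundle the IP-sets into maps $\phi_j \colon \mathcal{F} \to \ZZ^{n_j}$, push the uncongested idempotents $q_j$ forward to idempotents $p_j = \phi_{j\ast}(q_j)$ via Lemma~\ref{lem:uncongestedidp}, rewrite the limit with Lemma~\ref{lem:phistar}, and invoke Theorem~\ref{thm:MainA}. Your explicit remarks on the vector-valued form of Lemma~\ref{lem:uncongestedidp} and on peeling off the iterated limits one at a time are just careful elaborations of steps the paper leaves implicit.
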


\begin{proof} 
We use Lemma~\ref{lem:phistar} and define the following maps. For each $j=1, \dotsc,
s$, let 
\[ 
	\phi_j \colon \mathcal{F} \to \ZZ^{n_j}, \qquad  
	\phi_j(\alpha) = \bigl( w_{\alpha}^{1,j}, \dotsc, w_{\alpha}^{n_j,j} \bigr), 
\] 
and introduce new ultrafilters 
\[
	p_j = \phi_{j\ast}(q_j) \in \beta\Z^{n_j}.
\]
Since the original ultrafilters were uncongested, all $p_j$ are idempotents by virtue
of Lemma~\ref{lem:uncongestedidp}, and we obtain 
\[
	P = \plim{p_1}_{\z[1]} \cdots \plim{p_s}_{\z[s]} \unop{f_i(\z[1], \dotsc, \z[s])}
\]
from Lemma~\ref{lem:phistar}. The result now follows from the previous theorem. 
\end{proof}

Now Lemma~\ref{lem:Uncongested} states that any IP-ring is contained in an
uncongested idempotent of $\beta\mathcal{F}$; it follows that the conclusion of
Theorem~\ref{thm:MainB} holds equally well after replacing each ultrafilter limit by
a limit over some IP-ring. We thus recover the main theorem of the original paper
\cite{BFM}, as we had set out to do.

\section{Consequences} \label{sec:VI}

From the two theorems in the previous section, we can now derive several other
results. In order to simplify the statements, we shall only consider single
$p$-limits.  Let us begin by showing why it is useful that the weak operator limits
we considered are orthogonal projections.

\begin{thmsec} \label{thm:Consequence}
Let $\msp$ be a probability measure space, and let $A \subseteq X$ be a measurable
set. Let $T_1, \dotsc, T_m$ be commuting invertible
measure-preserving transformations on $X$. Furthermore, assume that polynomials
$f_1, \dotsc, f_m \in \polint{\z}$ are given, where $\z = (z_1, \dotsc, z_n)$,
such that $f_i(0) = 0$ for all $i = 1, \dotsc, m$.
\begin{enumerate}
\item For any idempotent $p \in \beta\Zn[n]$, one has
\[
	\plim{p}_{\z} \Ms{A \cap \mspt{f_i(\z)} A} \geq \ms{A}^2.
\]
\item For any uncongested idempotent $q \in \beta\mathcal{F}$ and for any
collection of IP-sets $w_{\bullet}^j$, with $j=1, \dotsc, n$, one has
\[
	\plim{q}_{\alpha} \Ms{A \cap \mspt{f_i(w_{\alpha}^1, \dotsc,
		w_{\alpha}^n)} A} \geq \ms{A}^2.
\]
\end{enumerate}
\end{thmsec}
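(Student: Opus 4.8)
The plan is to translate the measure-theoretic statement into a statement about operators on $\Lzwo$ by the Koopman construction, to apply Theorem~\ref{thm:MainA} (respectively Theorem~\ref{thm:MainB}) to turn the relevant weak operator limit into an orthogonal projection, and then to read off the quantitative bound with a single application of the Cauchy--Schwarz inequality.

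First I would introduce, for each $i = 1, \dotsc, m$, the unitary operator $U_i$ on $\Hil = \Lzwo$ given by $U_i g = g \circ T_i$ (the Koopman operator of $T_i$); these are unitary because the $T_i$ preserve $\mu$, and they commute because the $T_i$ do. Writing $\mathbf{1}_A$ for the indicator function of $A$, a short computation shows that $\mathbf{1}_{\mspt{f_i(\z)} A} = \mathbf{1}_A \circ \bigl( \prod_{i=1}^{m} T_i^{f_i(\z)} \bigr) = \unop{f_i(\z)} \mathbf{1}_A$ for every $\z \in \Zn[n]$, so that
\[
	\Ms{A \cap \mspt{f_i(\z)} A} = \inner{\mathbf{1}_A}{\unop{f_i(\z)} \mathbf{1}_A}.
\]
Applying Theorem~\ref{thm:MainA} with $s = 1$, $p_1 = p$, $n_1 = n$, and the given polynomials $f_1, \dotsc, f_m$, the operator $P = \plim{p}_{\z} \unop{f_i(\z)}$ is an orthogonal projection. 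Taking the $p$-limit of both sides above and using the definition of the weak operator limit (all functions involved being real-valued, so no conjugation intervenes), we obtain
\[
	\plim{p}_{\z} \Ms{A \cap \mspt{f_i(\z)} A} = \inner{\mathbf{1}_A}{P \mathbf{1}_A} = \inner{P \mathbf{1}_A}{P \mathbf{1}_A} = \norm{P \mathbf{1}_A}^2,
\]
the middle equality being $P = P^{\ast} = P^2$.

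It then remains to show $\norm{P \mathbf{1}_A}^2 \geq \ms{A}^2$. The decisive observation is that $P$ fixes the constant function $\mathbf{1}$: every $U_i$ fixes $\mathbf{1}$, hence so does each $\unop{f_i(\z)}$, and therefore so does the weak limit $P$. Consequently $\inner{P \mathbf{1}_A}{\mathbf{1}} = \inner{\mathbf{1}_A}{P \mathbf{1}} = \inner{\mathbf{1}_A}{\mathbf{1}} = \ms{A}$, and since $\norm{\mathbf{1}}^2 = \ms{X} = 1$, Cauchy--Schwarz gives
\[
	\norm{P \mathbf{1}_A}^2 \geq \frac{\bigabs{\inner{P \mathbf{1}_A}{\mathbf{1}}}^2}{\norm{\mathbf{1}}^2} = \ms{A}^2.
\]
Together with the preceding display this establishes part~(1).

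For part~(2) I would reduce to part~(1) by transfer. Put $\phi \colon \mathcal{F} \to \Zn[n]$, $\phi(\alpha) = (w_{\alpha}^1, \dotsc, w_{\alpha}^n)$; as in the proof of Theorem~\ref{thm:MainB}, the uncongestedness of $q$ makes $p = \phis(q)$ an idempotent of $\beta\Zn[n]$ by Lemma~\ref{lem:uncongestedidp}. Lemma~\ref{lem:phistar}, applied to the map $\z \mapsto \ms{A \cap \mspt{f_i(\z)} A}$ from $\Zn[n]$ into the compact interval $\unit$, then gives
\[
	\plim{q}_{\alpha} \Ms{A \cap \mspt{f_i(w_{\alpha}^1, \dotsc, w_{\alpha}^n)} A} = \plim{p}_{\z} \Ms{A \cap \mspt{f_i(\z)} A} \geq \ms{A}^2,
\]
the last inequality being part~(1); alternatively, one could repeat the argument of the previous two paragraphs verbatim with Theorem~\ref{thm:MainB} in place of Theorem~\ref{thm:MainA}. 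I do not expect a genuine obstacle here: the only delicate points are the bookkeeping in the Koopman translation --- getting the identity for $\ms{A \cap \mspt{f_i(\z)} A}$ with the correct unitaries --- and the observation $P \mathbf{1} = \mathbf{1}$, which is precisely what makes the Cauchy--Schwarz step produce $\ms{A}^2$ rather than merely a nonnegative quantity.
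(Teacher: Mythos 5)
Your proposal is correct and follows essentially the same route as the paper: the Koopman translation, Theorem~\ref{thm:MainA} to make $P$ an orthogonal projection, and Cauchy--Schwarz against the constant function $\mathbf{1}$ (which $P$ fixes) to extract $\ms{A}^2$. Your transfer argument for part~(2) via Lemmas~\ref{lem:uncongestedidp} and \ref{lem:phistar} is exactly the reduction the paper leaves to the reader.
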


\begin{proof}
We shall only prove the first statement; the argument will likely be familiar to the
reader anyway. On the Hilbert space $\Hil = \Lzwo$, introduce $m$ commuting unitary 
operators $U_1, \dotsc, U_m$, defining $U_i$ by the rule $U_i g = g \circ T_i$ for $g
\in \Hil$.  By virtue of Theorem~\ref{thm:MainA}, the operator
\[
	P = \plim{p}_{\z} \unop{f_i(\z)}.
\]
is an orthogonal projection onto some closed subspace of $\Hil$.
Write $g$ for the characteristic function of the set $A$, and introduce the
abbreviation
\[
	T = \prod_{i=1}^m T_i^{f_i(\z)}.
\]
Then we have
\[
	\plim{p}_{\z} \mu\bigl(A \cap T^{-1} A\bigr) =
	\plim{p}_{\z} \Inner{g}{\unop{f_i(\z)}g} = \inner{g}{Pg}.
\]
Now $P$ is a projection; therefore, if $e \equiv 1$ denotes the function
identically equal to 1, of norm $\norm{e} = 1$,
\[
	\inner{g}{Pg} = \norm{Pg}^2 = \norm{Pg}^2 \norm{e}^2 \geq \inner{Pg}{e}^2 =
		\inner{g}{Pe}^2.
\]
Finally, $Pe = e$, since $e$ is invariant under the action of the unitary 
operators $U_i$, and so
\[
	\inner{g}{Pe}^2 = \inner{g}{e}^2 = \ms{A}^2.
\]
Combining the three displayed (in)equalities gives the desired result.
\end{proof}

The consequences of the preceding theorem are twofold. First, when applied to the case of
a single measure-preserving transformation, the two inequalities in
Theorem~\ref{thm:Consequence} show precisely that the sets
\[
	\set{f(\z)}{\z \in \Zn[n]}
\]
and
\[
	\set{f(w_{\alpha}^1, \dotsc, w_\alpha^n)}{\alpha \in \mathcal{F}}
\]
are sets of nice recurrence; here $f$ may be any polynomial in $\polint{\z}$
satisfying $f(0) = 0$, and $w_{\bullet}^j$, with $j = 1, \dotsc, n$, can
be arbitrary IP-sets.

Secondly, we can exploit the fact that the idempotent ultrafilters in 
Theorem~\ref{thm:Consequence} may be chosen arbitrarily. Under the assumptions
made above, for any $\epsilon > 0$, the set
\[
	R_{\epsilon} = \set{\z \in \Zn[n]}{\mu\bigl(A \cap T^{-1} A\bigr) \geq \ms{A}^2 -
		\epsilon}
\]
has to be contained in every idempotent ultrafilter in $\beta\Zn[n]$. The reader will
remember that this is equivalent to saying that $R_{\epsilon}$ is IP*, that is,
intersects every IP-set of $\Zn[n]$. In particular, every $R_{\epsilon}$ is a
syndetic set, because the IP* property implies syndeticity. A special case of this
result is Khintchine's recurrence theorem, which states that for a single
measure-preserving transformation $T$, the sets of nice returns
\[
	\set{n \in \N}{\mu\bigl(A \cap T^{-1} A\bigr) \geq \ms{A}^2 - \epsilon}
\]
are syndetic. Several other applications may be found in the original paper
\cite{BFM}.

\begin{bibsection}
\begin{biblist}
\bib{BFM}{article}{
	author={Bergelson, Vitaly},
	author={Furstenberg, Hillel},
	author={McCutcheon, Randall},
	title={IP-sets and polynomial recurrence},
	journal={Ergodic Theory and Dynamical Systems},
	volume={16},
	date={1996},
	number={5},
	pages={963--974},
}
\bib{BM}{article}{
   author={Bergelson, Vitaly},
   author={McCutcheon, Randall},
   title={An ergodic IP polynomial Szemer\'edi theorem},
   journal={Memoirs of the American Mathematical Society},
   volume={146},
   date={2000},
   number={695},
}
\bib{BSur}{article}{
  author={Bergelson, Vitaly},
	title={Ergodic Ramsey theory---an update},
	booktitle={Ergodic theory of $\mathbb{Z}^d$ actions (Warwick, 1993--1994)},
	series={London Mathematical Society Lecture Note Series},
	volume={228},
	pages={1--61},
	publisher={Cambridge University Press},
	place={Cambridge},
	date={1996},
}
\bib{HiS}{book}{
	author={Hindman, Neil},
	author={Strauss, Dona},
	title={Algebra in the Stone-\v Cech compactification},
	series={de Gruyter Expositions in Mathematics},
	volume={27},
	publisher={Walter der Gruyter \& Co.},
	address={Berlin},
	year={1998},
}
\end{biblist}
\end{bibsection}
\end{document}